\documentclass[11pt,reqno]{amsart}
\usepackage[toc,page]{appendix}
\usepackage{svg}
\usepackage{graphicx} 
\usepackage{url}
\usepackage{amsmath} 
\usepackage{amssymb}
\usepackage{amsthm}
\usepackage{bm, bbm}
\usepackage{booktabs}
\usepackage{polynom}
\usepackage[outline]{contour}
\usepackage{xcolor}
\usepackage{pgfplots}
\usepackage{mathtools}
\usepackage{stmaryrd}
\usepackage{caption}
\pgfplotsset{compat=1.18,} 
\usepackage{xfrac}  
\usepackage{enumitem} 
\usepackage{float}
\usepackage{comment}
\usepackage{mathrsfs}
\usepackage[automark]{scrlayer-scrpage}
\usepackage[ruled,vlined, linesnumbered]{algorithm2e}
\usepackage{changepage}
\usepackage[normalem]{ulem}

\usepackage{multirow}
\usepackage{booktabs}

\usepackage{pifont}

\usepackage{adjustbox}

\DeclareMathAlphabet{\mathpzc}{OT1}{pzc}{m}{it}

\theoremstyle{plain}
\newtheorem{thm}{Theorem}[section] 

\newtheorem{prop}[thm]{Proposition}
\newtheorem{rem}[thm]{Remark}

\newtheorem{defn}[thm]{Definition}

\definecolor{unia-purple}{RGB}{173, 0, 124}
\definecolor{light-gray}{rgb}{0.8889,0.8889,0.8889}
\definecolor{cpl3}{rgb}{1.0,0.0,0.0}
\definecolor{cpl1}{rgb}{0.8889,0.4356,0.2781}
\definecolor{cpl2}{rgb}{0.0,0.6056,0.9787}
\definecolor{cpl4}{rgb}{0.2422,0.6433,0.3044}
\definecolor{cpl6}{rgb}{0.2, 0.2, 0.2}

\usepackage{scalerel,stackengine}
\usepackage[maxnames=5]{biblatex} 
\renewbibmacro{in:}{}
\DeclareFieldFormat[article,incollection,inproceedings,phdthesis,thesis,misc,unpublished]{title}{#1}  
\AtEveryBibitem{%
	\clearfield{number}
	{}}

\AtEveryBibitem{%
	\ifentrytype{article}{%
		\clearfield{url}%
	}{}
}

\usepackage{hyperref}
\hypersetup{
	colorlinks=true,
	linkcolor=blue,
	filecolor=blue,      
	urlcolor=blue,
	citecolor=blue
}

\newcommand\N{\mathbb N}
\newcommand\C{\mathbb C}

\newcommand\R{\mathbb R}

\DeclareMathOperator{\Drm}{D}
\DeclareMathOperator{\grad}{grad}

\DeclareMathOperator{\logit}{logit}

\DeclareMathOperator{\trace}{trace}
\DeclareMathOperator{\id}{id}

\DeclareMathOperator*{\argmin}{arg\,min}

\DeclareMathOperator{\diag}{diag}
\DeclareMathOperator{\dom}{dom}
\DeclareMathOperator{\rank}{rank}
\DeclareMathOperator{\res}{res}
\DeclareMathOperator{\tol}{tol}

\def\restr{r}
\def\prol{p}
\def\onebb{\mathbbm{1}}

\def\drm{\text{d}}
\def\dxi{\,\text{d}\xi}
\def\ds{\,\text{d}s}
\def\dt{\,\text{d}t}

\newcommand\calA{\mathcal A}
\newcommand\calB{\mathcal B}
\newcommand\calE{\mathcal E}
\newcommand\calC{\mathcal C}

\newcommand\calK{\mathcal K}

\newcommand\calS{\mathcal S}
\newcommand\calT{\mathcal T}
\newcommand\calU{\mathcal U}

\newcommand{\M}{\mathcal{M}}

\newcommand{\Tan}{\mathrm{T}}
\newcommand{\Ret}{\mathcal R}
\newcommand{\invRet}{\mathcal L}
\newcommand{\St}{\mathrm{St}}

\newcommand{\GP}{{\rm GP}}
\newcommand{\KS}{{\rm KS}}
\newcommand{\CC}{{\rm CC}}

\newcommand{\bsm}{\left(\begin{smallmatrix}}
	\newcommand{\esm}{\end{smallmatrix}\right)}

\makeatletter
\def\author@andify{%
	\nxandlist {\unskip ,\penalty-1 \space\ignorespaces}%
	{\unskip {} \@@and~}%
	{\unskip \penalty-2 \space \@@and~}%
}
\renewcommand{\andify}{%
	\nxandlist{\unskip, }{\unskip{} \@@and~}{\unskip \space \@@and~}}
\makeatother

\definecolor{myBlue}{rgb}{0.0, 0.0, 0.8}
\definecolor{myOrange}{RGB}{225,92,22} 
\definecolor{myGreen2}{RGB}{114,175,30} 
\definecolor{myPurple}{rgb}{0.494,0.184,0.556}%

\newif\ifshowcomments
\showcommentstrue

\ifshowcomments
\newcommand{\Ycomment}[1]{{\color{magenta}{\bf Y:} #1}}
\newcommand{\Jcomment}[1]{{\color{myOrange}{\bf J:} #1}}
\newcommand{\Scomment}[1]{{\color{myGreen2}{\bf S:} #1}}
\newcommand{\Tcomment}[1]{{\color{myBlue}{\bf T:} #1}}
\newcommand{\Vcomment}[1]{{\color{myPurple}{\bf V:} #1}}
\newcommand{\todo}[1]{{\color{red}#1}}

\else
\newcommand{\Ycomment}[1]{}
\newcommand{\Jcomment}[1]{}
\newcommand{\Scomment}[1]{}
\newcommand{\Tcomment}[1]{}
\newcommand{\Vcomment}[1]{}
\newcommand{\todo}[1]{}
\fi

\usepackage{tikz}
\usepackage{tikz-cd}
\usepackage{tikz-3dplot}
\usetikzlibrary{
	external, 
	arrows.meta, decorations.pathmorphing, backgrounds, fadings, calc}
\tikzset{external/system call={pdflatex -shell-escape -halt-on-error -interaction=nonstopmode -jobname "\image" "\texsource" \space \relax}}

\title[]{Riemannian Multilevel Optimization with Application to 
	Constrained Energy Minimization Problems}

\author[Y. Elshiaty]{Yara Elshiaty}
\author[S. Petra]{Stefania Petra}
\author[J. P\"uschel]{Jonas P\"uschel}
\author[T. Stykel]{Tatjana Stykel}
\author[F.-J. Vanmaele]{Ferdinand-Joseph Vanmaele}
\address[Y. Elshiaty]{Institute for Mathematics, Heidelberg University, Im Neuenheimer Feld 205, 69120, Heidelberg, Germany, \tt{elshiaty@math.uni-heidelberg.de}}
\address[S. Petra]{Institute for Mathematics, IWR \& MIISM, Heidelberg University, Im Neuenheimer Feld 205, 69120, Heidelberg, Germany, \tt{petra@math.uni-heidelberg.de}}
\address[J. P\"uschel]{Institute of Mathematics, University of 
	Augsburg, Universit\"atsstra{\ss}e~12a, 86159 Augsburg, Germany, \tt{jonas.pueschel@uni-a.de}}
\address[T. Stykel]{Institute of Mathematics \& Centre for 
	Advanced Analytics and Predictive Sciences (CAAPS), University of 
	Augsburg, Universit\"atsstra{\ss}e~12a, 86159 Augsburg, Germany, \tt{tatjana.stykel@uni-a.de}}
\address[F.-J. Vanmaele]{Institute for Mathematics, IWR \& MIISM, Heidelberg University, Im Neuenheimer Feld 205, 69120, Heidelberg, Germany,
	\tt{ferdinand.vanmaele@medma.uni-heidelberg.de}}

\thanks{Y.~Elshiaty and S.~Petra acknowledge support by the German Research Foundation under Germany’s Excellence Strategy EXC-2181/1 - 390900948 (the Heidelberg STRUCTURES Excellence Cluster). 
	The work of J.~P\"uschel and T.~Stykel is part of a project that has received funding from the German Research Foundation – Project number 564828373. 
}

\begin{document}
	
	\begin{abstract}
		Multilevel optimization methods are highly effective for discretized energy minimization problems, but their Euclidean formulation does not directly apply to manifold constraints. We introduce a Riemannian extension of multilevel optimization based on a  coarse model that is first-order coherent with the fine-level objective and yields descent directions under mild retraction-convexity assumptions. The framework includes metric-compatible vector transfer operators for passing first-order information between levels, covering both intrinsic and extrinsic constructions. We formulate two-level and multilevel algorithms and prove global convergence using a Riemannian Zoutendijk-type argument. Applications to Kohn--Sham density functional theory, Gross--Pitaevskii ground-state computation, and binary continuous cuts demonstrate the method on Stiefel, ellipsoid and Bernoulli manifolds. The experiments show significant reductions in computational time compared with single-level Riemannian optimization.
	\end{abstract}
	
	\maketitle
	
	{\tiny {\bf Key words:} 
		Riemannian optimization, multilevel optimization,
		constrained energy minimization problems, Riemannian coarse model, Kohn--Sham problem, Gross--Pitaevskii problem, binary image segmentation}
	
	{\tiny {\bf AMS subject classifications.} 
		65K10, 
		65N25, 
		68U10  
	}
	
	
	\section{Introduction}
	\label{sec:introduction}
	
	We consider constrained minimization problems of the form
	\begin{equation}\label{eq:min}
		\min_{x\in\M} f(x),
	\end{equation}
	where $f:V\to\R$ is a smooth objective function, $V$ is a~finite-dimensional Euclidean space, and $\M\subset V$ is a~feasible set that admits a~smooth Riemannian manifold structure. Such problems arise in many applications in which an energy or loss functional is minimized under structural constraints imposed by the underlying model. Examples include quantum systems, electronic structure calculations, statistical inference, computer vision, image processing, and variational models for partial differential equations. After choosing a~finite-dimensional representation, such as finite elements, plane waves, or graph-based discretizations,  one obtains optimization problems with structured feasible sets. In multilevel settings, this structure is inherited across levels, yielding a hierarchy of constrained optimization problems. Here, the term level is understood broadly and is not restricted to geometric grids. 
	
	We aim to develop efficient optimization methods for solving~\eqref{eq:min} based on first-order iterative schemes, with the goal of improving computational efficiency. 
	The key idea is to exploit a hierarchy of coarse representations of~\eqref{eq:min} while carefully transferring information between discretization levels.
	
	The main difficulty in multilevel optimization under constraints is that coarse corrections typically violate feasibility, and maintaining a~descent direction within the constraint set is nontrivial. These challenges can be addressed by introducing maps between points on the coarse and fine manifolds, together with consistent vector transfer operators for tangent vectors. Using these multilevel structures, we construct efficient Riemannian optimization algorithms that reduce computational cost by improving the convergence properties of the fine-level problem.
	
	\subsection{Related work}
	\label{ssec:RelatedWork}
	
	In the context of energy minimization and related nonlinear eigenvalue problems, multilevel and multigrid techniques have attracted considerable attention. A~widely used strategy is cascadic multigrid, e.g., \cite{AntLT17,BorH08,WuWB17}, closely related to coarse-to-fine or multiscale 
	acceleration strategies in imaging \cite{RouxLeclercHild2014}. The problem is 
	solved on a hierarchy of increasingly fine discretization levels, with each 
	coarse-level solution prolongated to initialize the next finer optimization 
	problem. While simple and broadly applicable, this strategy does not provide a 
	coarse correction of the current fine-level objective. Since the objective 
	itself changes with the discretization level, decrease on one level does not 
	directly imply decrease on the next finer level.
	
	In contrast to purely cascadic strategies, more tightly coupled multilevel
	methods use coarse discretizations to compute corrections for the current
	fine-level iterate. Examples include two-level discretization approaches 
	for nonlinear eigenvalue and constrained energy minimization problems
	\mbox{\cite{CanCHM18,CanDMSV16,GriH25,HenMP14}}, Newton-based multigrid
	methods~\cite{XuXXF21}, and adaptive multilevel strategies based on a~posteriori error estimators~\cite{ChenDGHZ14,wang2023posteriori}, mesh
	redistribution~\cite{XieXYZ23}, or local energy reduction~\cite{HeidW25}.
	
	A different class of multilevel methods is represented by the \emph{full approximation scheme}, originally introduced by Brandt \cite{Brandt1977}. It is based on solving a~nonlinear equation on the coarse level, where the coarse gradient is corrected by restricted fine-level gradient information. The fine iterate is then updated using a~prolongated coarse-grid correction. Alternatively, the \emph{optimization formulation}, due to Nash~\cite{Nash:2000}
	and commonly referred to as MGOPT in the multigrid literature, replaces this nonlinear coarse equation by the minimization of a~coarse surrogate functional, obtained by augmenting the coarse objective with a linear correction term. These two approaches are equivalent in the sense that the nonlinear coarse equation corresponds to the first-order optimality condition of the surrogate minimization problem.
	
	The optimization formulation has several advantages that are important for this
	paper. First, the coarse problem need not be solved exactly: a sufficiently
	decreasing approximate coarse solution already yields a valid fine-level
	correction; see Proposition~\ref{prop:descent_dir}. Second, the resulting
	correction can be combined naturally with a line search, which is the basis of
	the convergence analysis for line-search MGOPT methods in~\cite{WenGoldfarb2010}.
	Third, and central to the present paper, the optimization formulation extends beyond unconstrained Euclidean problems. Constraints in a Euclidean setting can be incorporated by the Bregman approach of~\cite{ElshiatyP2026}, while manifold constraints require the correction term to be expressed using the geometry of the feasible set rather than by directly comparing Euclidean residuals. Riemannian extensions of this idea were developed in~\cite{MuePZ23,SutV2021}.
	
	The approach of~\cite{SutV2021} adapts 
	the multilevel optimization formulation of~\cite{Nash:2000} to embedded manifolds and formulates the coarse correction extrinsically. 
	The work~\cite{MuePZ23} develops a~Riemannian multilevel method
	including geometric Galerkin-type coarse models and the distinction between point and vector interlevel transfers, and illustrates it on Bernoulli manifolds with prolongation-based transfer constructions.
	The present paper builds on these developments and provides a unified Riemannian multilevel optimization framework that covers both intrinsic and extrinsic constructions, several metric-compatible transfer operators, and a~global convergence analysis.
	
	\subsection{Contribution and organization}
	\label{ssec:contribution}
	
	We develop a Riemannian multilevel optimization framework for constrained minimization problems.  While inspired by the optimization perspective on multigrid methods, the proposed framework is not limited to grid-based discretizations. Instead, the hierarchy may be induced by finite element spaces, plane-wave bases, graph-based discretizations, or algebraic coarse representations. The main contributions of this work are as follows.
	
	\begin{enumerate}
		\item \textit{Riemannian coarse model.} 
		We construct a~Riemannian coarse model for constrained optimization on Riemannian manifolds and show that it is first-order coherent with the fine-level objective; see Section~\ref{sec:coarse_model}.
		\item \textit{Vector transfer operators.} We develop a systematic family of vector transfer operators, including restriction-based, prolongation-based, and projection-based constructions, covering both intrinsic and extrinsic settings; see Section~\ref{sec:vector_transport_choice}.
		\item \textit{Metric-independent formulations.} We identify conditions on the vector transfer operators under which the coarse model is independent of the choice of Riemannian metric; see Proposition~\ref{prop:metric_independence}.
		\item \textit{Convergence analysis.} We prove global convergence of the resulting two-level and multilevel methods by means of  a Riemannian Zoutendijk-type argument; see Section~\ref{ssec:convergence}.
		\item \textit{Numerical validation.} 
		We demonstrate the flexibility of the framework on three constrained energy minimization problems with different manifold geometries and discretization types: Kohn--Sham density functional theory on the Stiefel manifold with plane waves, Gross--Pitaevskii ground-state computation on an ellipsoid with finite elements, and binary continuous cuts on the Bernoulli manifold with finite differences; see Section~\ref{sec:applications}.
	\end{enumerate}
	
	The paper is organized as follows.
	Section~\ref{sec:fundam} recalls retractions and lifting maps on Riemannian manifolds and defines retraction convexity.
	In Section~\ref{sec:coarse_model}, 
	we introduce the Riemannian coarse model and study the properties needed for the multilevel framework. Section~\ref{sec:vector_transport_choice} discusses the construction of different vector transfer operators that ensure a~compatible 
	exchange of first-order information between levels.
	In Section~\ref{sec:geometric-multilevel}, we present a~two-level Riemannian optimization method and extend it to a~multilevel framework.
	Section~\ref{sec:applications} provides comprehensive numerical results for three energy minimization problems from different application areas.
	Section~\ref{sec:conclusion} concludes the paper with a~summary of the main results and a~discussion of possible directions for future research.
	
	
	\section{Preliminaries}
	\label{sec:fundam}
	We briefly review the fundamental concepts of Riemannian geometry. For a~comprehensive treatment, the interested reader is referred to~\cite{AbsMS08,Boumal23}.
	A~\emph{Riemannian manifold} is a~tuple~$(\M, \langle\cdot,\cdot\rangle)$, where $\M$ is a smooth manifold, and $\langle\cdot,\cdot\rangle$ is a \emph{Riemannian metric}. The metric assigns to each point $x \in \M$ an inner product $\langle\cdot, \cdot\rangle_x$ on the tangent space $\Tan_x\M$ and this assignment depends smoothly on $x$. Consequently, for all $x \in \M$, $\Tan_x\M$ is a Hilbert space with respect to the inner product $\langle\cdot, \cdot\rangle_x$, which induces the norm $\|\cdot\|_x$.
	
	\subsection{Retractions and lifting maps}
	
	Retractions provide a~way to move on the manifold~$\M$ along tangent vectors. They are first-order approximations of the exponential map. For~\mbox{$x\in\M$}, a~\emph{retraction} is a smooth map $\Ret_x \colon \Tan_x\M \to \M$ that satisfies $\Ret_x(0) = x$ and $\Drm \Ret_x(0) = \id_{\Tan_x\M}$ at the origin $0 \in \Tan_x\M$.
	The differential of the retraction $\Ret_x$ at $v\in\Tan_x\M$,  
	\begin{equation*}
		\Drm \Ret_x(v) \colon \Tan_x\M \to \Tan_{\Ret_x(v)}\M, 
	\end{equation*}
	provides a vector transport, called the \emph{differentiated retraction vector transport}. In general, this mapping is not an~isometry; however, it is still an~isomorphism for all~$v$ in a~neighborhood of $0 \in \Tan_x\M$, whose existence is guaranteed by~\cite[Cor.~10.27]{Boumal23}. 
	
	A~\emph{lifting map} $\invRet_x\colon \calU_x \to \Tan_x\M$ is defined as a~local inverse of the retraction satisfying 
	\begin{equation}\label{eq:RL}
		\Ret_x(\invRet_x(y)) = y\qquad \text{ for all } y\in \calU_x,
	\end{equation}
	where $\calU_x\subset\M$ is an~open neighborhood of $x$. The differential of the lifting map $\invRet_x$ at $y\in \calU_x$,
	\[
	\Drm \invRet_x(y) \colon \Tan_y\M \to \Tan_{x}\M, 
	\]
	serves also as a~vector transport. Due to the smoothness of $\Ret_x$ and $\Drm \Ret_x(0) = \id_{\Tan_x\M}$, there exists a~neighborhood $\calU_x$ of $x$ such that $\Drm \Ret_x(\invRet_x(y))$ is invertible for all $y \in \calU_x$. In this case, applying the chain rule to \eqref{eq:RL}, we calculate the derivative of $\invRet_x$ as
	\begin{equation*} 
		\Drm \invRet_x(y) = \Drm \Ret_x(\invRet_x(y)) ^{-1}.
	\end{equation*}
	The exponential and logarithm maps are examples of retraction and lifting, respectively.
	
	\subsection{Retraction convexity}
	The concept of geodesic convexity \cite[Sect.~6]{Lee18} can be naturally extended to general retractions. 
	For a given retraction $\Ret$, a~set $\calC \subset \M$ is called \emph{retraction-convex} (or \emph{$\Ret$-convex}), if for any $x,y \in \calC$, $v = \invRet_x(y)$ exists and $\Ret_x(tv) \in \calC$ for all $t \in [0,1]$. The existence of $\Ret$-convex neighborhoods
	around every point of $\M$ was established in~\cite[Thm.~3.1]{SegK24}. On an~$\Ret$-convex set $\calC\subset \M$, a~function $f : \calC  \to \R$ is called \emph{$\Ret$-convex}, if for all $x,y \in \calC$ and $v = \invRet_x(y)$, the function $t \mapsto f(\Ret_x(tv))$ is convex on $[0,1]$ in the usual Euclidean sense. As in the Euclidean setting, $\Ret$-convexity admits a first-order characterization in the form of a supporting inequality.
	
	\begin{prop}[$\Ret$-supporting inequality]\label{prop:R_supp_ineq}
		Let $\Ret$ be a retraction on a~Riemannian mani\-fold~$\M$ and let $\invRet$ be the corresponding lifting map. Further, let $\calC \subset \M$ be a~$\Ret$-convex set and let $f : \calC \to  \R$ be continuously differentiable and $\Ret$-convex. Then for all $x,y \in \calC$, we have 
		\begin{equation}\label{eq:supp_ineq}
			f(y) \ge f(x) + \langle \grad f(x), \invRet_x(y) \rangle_x.
		\end{equation}
	\end{prop}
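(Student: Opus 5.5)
We reduce the statement to the one-dimensional Euclidean supporting inequality along the retraction curve joining $x$ to $y$. Fix $x,y\in\calC$. By $\Ret$-convexity of $\calC$, the vector $v=\invRet_x(y)\in\Tan_x\M$ exists and the curve $\gamma(t)=\Ret_x(tv)$, $t\in[0,1]$, stays in $\calC$. Set $\varphi(t)=f(\gamma(t))$. Then $\varphi(0)=f(\Ret_x(0))=f(x)$, and by \eqref{eq:RL}, $\varphi(1)=f(\Ret_x(\invRet_x(y)))=f(y)$. By assumption $\varphi$ is convex on $[0,1]$. It is also continuously differentiable, being the composition of the $C^1$ function $f$ with the smooth curve $\gamma$.

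The key step is to identify $\varphi'(0)$. By the chain rule,
\[
\varphi'(t)=\Drm f(\gamma(t))[\gamma'(t)], \qquad \gamma'(t)=\Drm\Ret_x(tv)[v].
\]
At $t=0$ we have $\gamma(0)=x$ and $\gamma'(0)=\Drm\Ret_x(0)[v]=v$, since $\Drm\Ret_x(0)=\id_{\Tan_x\M}$. The definition of the Riemannian gradient, $\Drm f(x)[v]=\langle \grad f(x),v\rangle_x$, then gives
\[
\varphi'(0)=\langle \grad f(x),\invRet_x(y)\rangle_x .
\]

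Now apply the Euclidean first-order characterization of convexity for a differentiable convex function on $[0,1]$: $\varphi(1)\ge\varphi(0)+\varphi'(0)(1-0)$. This follows from the monotonicity of the difference quotients $(\varphi(t)-\varphi(0))/t$, which are nondecreasing in $t$ by convexity and tend to $\varphi'(0)$ as $t\downarrow0$. Evaluating the quotient at $t=1$ and substituting the expressions for $\varphi(0)$, $\varphi(1)$ and $\varphi'(0)$ gives \eqref{eq:supp_ineq}.

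The only delicate point is differentiability at the endpoint $t=0$, since $\calC$ need not be open and $f$ is defined only on $\calC$. We interpret "continuously differentiable on $\calC$" as $f$ being the restriction of a $C^1$ function on an open neighborhood of $\calC$ in $\M$, as is needed anyway for $\grad f(x)$ to make sense. Then $\varphi$ extends to a $C^1$ function on an open interval containing $[0,1]$, and $\varphi'(0)$ coincides with the one-sided derivative used in the difference-quotient argument.
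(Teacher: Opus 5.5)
Your proposal is correct and matches the paper's proof: restrict $f$ to the retraction curve $\gamma(t)=\Ret_x(t\,\invRet_x(y))$, apply the scalar supporting inequality $\varphi(1)\ge\varphi(0)+\varphi'(0)$, and identify $\varphi'(0)=\langle\grad f(x),\invRet_x(y)\rangle_x$ via the chain rule and $\Drm\Ret_x(0)=\id_{\Tan_x\M}$. Your justification of the scalar inequality through monotone difference quotients, and your remark on differentiability at the endpoint $t=0$, are details the paper leaves implicit.
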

	\begin{proof}
		Let $x,y \in \calC$ and $v = \invRet_x(y)$. 
		For $t \in [0,1]$, define $\gamma(t) = \Ret_x(tv)$ and $f_\gamma(t) = f(\gamma(t))$. By definition of  $\Ret$-convexity, $f_\gamma : [0,1] \to \R$ is convex implying that $f_\gamma(1) \ge f_\gamma(0) + \dot{f}_\gamma(0)$. Using chain rule, we calculate
		\[
		\dot{f}_\gamma(0) = \frac{\drm}{\dt}  f(\gamma(t))\bigg|_{t=0}
		= \langle \grad f(x), \dot \gamma (0) \rangle_x 
		= \langle \grad f(x), v \rangle_x.
		\]
		Plugging this into the above inequality yields the result.
	\end{proof}
	
	This is a natural result, since the supporting inequality \eqref{eq:supp_ineq} depends only on first-order quantities, which are preserved by retractions through the first-order coherence condition $\Drm \Ret_x(0)=\id_{\Tan_x\M}$. In contrast, second-order quantities, such as Hessian-based characterizations and curvature, are in general not transferable to the $\Ret$-convexity framework.
	
	
	\section{Riemannian coarse model}\label{sec:coarse_model}
	
	The central idea of multilevel optimization is to accelerate iterations on the fine-level problem by occasionally substituting the fine objective $f_h$ with a~cheaper surrogate defined on a~coarser discretization. For the surrogate to be effective, two requirements must be satisfied. First, it has to retain the dominant nonlinear features of $f_h$ near the current iterate, so that its minimization yields a~direction along which $f_h$ is also expected to decrease. Second, it has to agree with $f_h$ to first order to ensure that the multilevel iteration does not introduce spurious critical points and inherits the convergence properties of the fine-level method. The classical construction satisfying both requirements in the Euclidean unconstrained setting is the Nash coarse model~\cite{Nash:2000}. Let~$f_h$ and~$f_H$ be the fine and coarse objective functions defined on $\R^{n_h}$ and $\R^{n_H}$, respectively, with $n_h>n_H$. Given current iterates $x \in \R^{n_h}$ and $y \in \R^{n_H}$, and a~linear restriction operator $R \in \R^{n_H \times n_h}$, one minimizes the coarse model
	\begin{equation}\label{eq:nash_eucl}
		q(z) = f_H(z) - \big\langle \nabla f_H(y) - R\nabla f_h(x),\, z - y \big\rangle.
	\end{equation}
	The linear correction term is chosen so that $\nabla q(y) = R\nabla f_h(x)$, thereby ensuring that the gradients of the coarse and fine problems coincide at the current iterates, up to the restriction operator~$R$.
	
	Two features of~the coarse model~\eqref{eq:nash_eucl} prevent its direct use on a~Riemannian manifold. The displacement $z - y$ is not intrinsically defined when both $z$ and $y$ belong to the coarse manifold, and the Euclidean inner product is no longer the natural pairing between the gradient and the displacement. Both objects, however, admit natural Riemannian counterparts: the displacement $z - y$ is replaced by the lifting  map $\invRet_y^H(z)$, which in tangent coordinates describes the motion from~$y$ to~$z$ along a~retraction. Similarly, the Euclidean inner product is replaced by the Riemannian metric $\langle \cdot, \cdot \rangle_y$ at the base point, while the linear restriction~$R$ is replaced by a~Riemannian vector restriction operator $R_x^y$ that transfers tangent vectors at~$x$ in the fine manifold to tangent vectors at~$y$ in the coarse manifold. With these modifications, the Nash construction extends almost verbatim and yields the Riemannian coarse model that forms the focus of this section. In what follows, we formulate the Riemannian coarse model precisely and show that it preserves first-order consistency (Proposition~\ref{prop:coherence}), generates descent directions for the fine objective (Proposition~\ref{prop:descent_dir}), and 
	for a fixed vector prolongation with metric-adjoint restriction, is independent of the choice of Riemannian metrics on the coarse and fine manifolds (Proposition~\ref{prop:metric_independence}).
	
	Let $V_H$ and $V_h$ denote the ambient Euclidean spaces associated with the coarse and fine spatial discretizations, respectively, with mesh widths~$H>h$. Furthermore, let $\M_H\subset V_H$ and $\M_h\subset V_h$ be the coarse and fine Riemannian manifolds equipped with the metrics $\langle\cdot,\cdot\rangle_y$ and~$\langle\cdot,\cdot\rangle_x$, respectively. The corresponding retractions are denoted by $\Ret^H$ and~$\Ret^h$. Consider the continuously differentiable coarse and fine objectives
	\[
	f_H \colon \M_H \to  \R, \qquad\qquad f_h\colon\M_h \to \R,
	\]
	with the Riemannian gradients $\grad f_H(y)\in \Tan_{y} \M_H$ and $\grad f_h(x)\in\Tan_{x} \M_h$. At each
	iteration of a~multilevel optimization scheme, we consider iterates
	$y \in \M_H$ and $x \in \M_h$ together with the injective \emph{vector
		prolongation} $P_{y}^{x} \colon \Tan_{y} \M_H \to \Tan_{x} \M_h$,
	and the surjective \emph{vector restriction} $R_{x}^{y} \colon \Tan_{x} \M_h \to \Tan_{y} \M_H$, which satisfy the \emph{geometric Galerkin condition} $R_{x}^{y} = (P_{y}^{x})^\ast$, i.e.,\
	\begin{equation}\label{eq:riem_galerkin}
		\langle P_{y}^{x} u,\, v\rangle_{x} \;=\; \langle u,\, R_{x}^{y} v\rangle_{y}
		\qquad \text{for all}\;u \in \Tan_{y} \M_H,\;v\in \Tan_{x} \M_h.
	\end{equation}
	These vector transfer operators enable the consistent transport of search
	directions and gradient information across levels, and the Galerkin
	condition ensures coherence of the multilevel optimization process.
	We further assume that the lifting map $\invRet_{y}^H=(\Ret_{y}^H)^{-1}$ is
	well-defined on a~$\Ret^H$-convex set $\calC_{y}\subset \M_H$. The
	\emph{Riemannian coarse model} then reads
	\begin{equation} \label{eq:riem_coarse_model}
		\min\limits_{z \in \calC_{y}} \; q(z) = f_H(z) - \langle w, \invRet^H_{y}(z)\rangle_{y}
		\enskip \text{with }\; w = \grad f_H(y) - R_{x}^{y}(\grad f_h(x)),
	\end{equation}
	where the correction term $\langle w, \invRet^H_{y}(z)\rangle_{y}$ is the
	Riemannian counterpart of the linear correction in~\eqref{eq:nash_eucl}
	and enforces first-order consistency at $z = y$
	(verified in Proposition~\ref{prop:coherence} below).
	
	Given an~approximate solution $\tilde{z}\in\calC_y$ of~\eqref{eq:riem_coarse_model}
	with $q(\tilde{z})<q(y)$, the resulting fine search direction
	\begin{equation}\label{eq:fine_grid_update_dir}
		d = P_{y}^{x}\big(\invRet_{y}^H(\tilde{z})\big) \in \Tan_{x} \M_h,
	\end{equation}
	together with a~step size $\alpha>0$, defines the new fine iterate
	$x_{\rm new}=\Ret_{x}^h(\alpha d)\in\M_h$. The flowchart of the two-level coarse correction step is presented in Figure~\ref{fig:multilevel_sketch}.
	
	\begin{figure}
		\centering
		\includegraphics{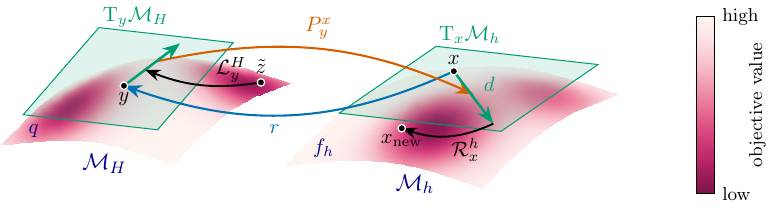}
		\caption{Flowchart of a two-level coarse correction step.} 
		\label{fig:multilevel_sketch}
	\end{figure}
	
	The following proposition provides the Riemannian gradient of the coarse model objective.
	
	\begin{prop}[Riemannian gradient of the coarse model]\label{prop:gradient_coarse_model}
		The Riemannian gradient of the coarse model objective~$q$ in~\eqref{eq:riem_coarse_model} at $z \in \calC_{y}$
		is given by
		\begin{equation}\label{eq:gradPsi}
			\grad q(z) = \grad f_H(z) - \Drm \invRet_{y}^H(z)^\ast (w),
		\end{equation}
		where $ \Drm \invRet_{y}^H(z)^\ast$ denotes the adjoint of $\Drm \invRet_{y}^H(z)$ with respect to the metrics $\langle\cdot,\cdot\rangle_y$ and $\langle\cdot,\cdot\rangle_z$.
	\end{prop}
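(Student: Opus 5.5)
The plan is to compute the differential of $q$ at $z\in\calC_y$ along an arbitrary tangent vector $\xi\in\Tan_z\M_H$, and then identify $\grad q(z)$ through its defining property $\Drm q(z)[\xi]=\langle \grad q(z),\xi\rangle_z$ for all $\xi$. Since $q$ is the difference of $f_H$ and the correction $g(z)=\langle w,\invRet^H_y(z)\rangle_y$, linearity of the differential reduces the task to the two terms separately. The first term contributes $\Drm f_H(z)[\xi]=\langle\grad f_H(z),\xi\rangle_z$ by definition of the Riemannian gradient.

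For the correction term, note that $w\in\Tan_y\M_H$ is fixed, independent of $z$. Moreover, $\invRet^H_y$ maps $\calC_y$ into the single vector space $\Tan_y\M_H$, and the pairing $\langle w,\cdot\rangle_y$ is a fixed linear functional on that space. We therefore take $\invRet^H_y$ to be smooth on the open set $\calU_y\supseteq\calC_y$ where it is defined, as in Section~\ref{sec:fundam}. The chain rule then gives
\[
\Drm g(z)[\xi]=\langle w,\Drm\invRet^H_y(z)[\xi]\rangle_y .
\]
Here $\Drm\invRet^H_y(z)\colon\Tan_z\M_H\to\Tan_y\M_H$ is the linear map discussed in Section~\ref{sec:fundam}, namely the inverse of $\Drm\Ret^H_y(\invRet^H_y(z))$. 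Since both tangent spaces are finite-dimensional inner product spaces, its adjoint $\Drm\invRet^H_y(z)^\ast\colon\Tan_y\M_H\to\Tan_z\M_H$ with respect to $\langle\cdot,\cdot\rangle_y$ and $\langle\cdot,\cdot\rangle_z$ exists and is unique. This yields $\Drm g(z)[\xi]=\langle\Drm\invRet^H_y(z)^\ast w,\xi\rangle_z$.

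Combining the two terms gives
\[
\Drm q(z)[\xi]=\langle\grad f_H(z)-\Drm\invRet^H_y(z)^\ast w,\ \xi\rangle_z
\]
for all $\xi\in\Tan_z\M_H$. Uniqueness of the Riesz representative in $\Tan_z\M_H$ then gives \eqref{eq:gradPsi}.

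The only delicate point is justifying the differentiation of $\invRet^H_y$ at $z$ when $z$ may lie on the boundary of $\calC_y$. I would handle this by working on the open neighborhood $\calU_y$ on which the lifting map is smooth, or on which $\Drm\Ret^H_y$ is invertible, and assuming $\calC_y\subset\calU_y$ as implicitly required for the coarse model to be well defined. Everything else is routine.
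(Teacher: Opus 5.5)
Your proof is correct and follows the paper's argument essentially verbatim. You split off the correction term $g(z)=\langle w,\invRet^H_y(z)\rangle_y$, differentiate it by the chain rule, move $\Drm\invRet^H_y(z)$ across the pairing via its adjoint, and identify the gradient by uniqueness of the Riesz representative; your remark about working on the open set $\calU_y\supseteq\calC_y$ where the lifting is smooth is a reasonable clarification the paper leaves implicit.
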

	\begin{proof}
		For $z \in \calC_{y}$, define $g(z) = \langle w, \invRet_{y}^H(z) \rangle_{y}$.
		Its directional derivative along \mbox{$u \in \Tan_z\M_H$} is given by 
		\[
		\Drm g(z)[u] = \langle w,  \Drm \invRet_{y}^H(z)[u] \rangle_{y} 
		= \langle \Drm \invRet_{y}^H(z)^\ast (w),  u \rangle_{z}. 
		\]
		This implies that $\grad g(z) = \Drm \invRet_{y}^H(z)^\ast (w)$, which immediately yields~\eqref{eq:gradPsi}.
	\end{proof}
	
	The first-order coherence property below ensures that critical points of
	the fine objective~$f_h$ correspond to critical points of the coarse
	model~\eqref{eq:riem_coarse_model}, and is essential for convergence.
	
	\begin{prop}[First-order coherence] \label{prop:coherence}
		For the coarse model \eqref{eq:riem_coarse_model}, we have
		\begin{equation} \label{eq:coherence}
			\grad q(y) = R_{x}^{y} \big(\grad f_h(x)\big).
		\end{equation}
	\end{prop}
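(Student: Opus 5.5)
We evaluate the gradient formula of Proposition~\ref{prop:gradient_coarse_model} at the base point $z=y$. From \eqref{eq:gradPsi},
\[
\grad q(y) = \grad f_H(y) - \Drm \invRet_{y}^H(y)^\ast (w).
\]
So the task reduces to identifying the differential of the lifting map at its own base point.

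The key step is to show that $\Drm \invRet_{y}^H(y) = \id_{\Tan_y\M_H}$. We will use the formula from Section~\ref{sec:fundam},
\[
\Drm \invRet_y(z) = \Drm \Ret_y(\invRet_y(z))^{-1}.
\]
This formula holds on a neighborhood $\calU_y$ of $y$ where the differentiated retraction is invertible. We need $\invRet_y^H(y)=0$. This follows because $\Ret^H_y(0)=y$ and $\invRet_y^H$ is the local inverse of $\Ret^H_y$ on $\calC_y$, which contains $y$. Combining this with $\Drm\Ret^H_y(0)=\id_{\Tan_y\M_H}$ gives
\[
\Drm \invRet_{y}^H(y) = \big(\Drm \Ret_y^H(0)\big)^{-1} = \id_{\Tan_y\M_H}.
\]

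Next we compute the adjoint. At $z=y$ both the domain and codomain of $\Drm \invRet_{y}^H(y)$ are $\Tan_y\M_H$, each equipped with the same inner product $\langle\cdot,\cdot\rangle_y$. The adjoint of the identity with respect to a single inner product is the identity, so $\Drm \invRet_{y}^H(y)^\ast(w)=w$. Substituting the definition of $w$ from \eqref{eq:riem_coarse_model} gives
\[
\grad q(y) = \grad f_H(y) - \grad f_H(y) + R_x^y(\grad f_h(x)) = R_x^y(\grad f_h(x)),
\]
which is \eqref{eq:coherence}.

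The only delicate point is justifying that $\invRet^H_y(y)=0$, i.e., that the lifting branch on $\calC_y$ is the one that sends $y$ to the origin. If this is not taken as part of the definition, we will argue via local uniqueness. By the inverse function theorem, $\Ret^H_y$ is a local diffeomorphism near $0$, and $0$ is a preimage of $y$. The lifting map is the smooth local inverse defined near $y$, so it must send $y$ to $0$.

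As a cross-check that avoids the adjoint formula, we can use the curve $z=\Ret^H_y(tu)$ for $u\in\Tan_y\M_H$. Along this curve, $\invRet^H_y(\Ret^H_y(tu))=tu$. Hence
\[
\frac{\drm}{\dt} q(\Ret^H_y(tu))\Big|_{t=0}=\langle \grad f_H(y),u\rangle_y-\langle w,u\rangle_y,
\]
which directly gives $\grad q(y) = \grad f_H(y) - w = R_x^y(\grad f_h(x))$.
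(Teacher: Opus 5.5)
Your proof is correct and is the paper's argument: evaluate the gradient formula of Proposition~\ref{prop:gradient_coarse_model} at $z=y$ and use $\Drm\invRet^H_y(y) = (\Drm\Ret^H_y(0))^{-1} = \id_{\Tan_y\M_H}$, so that the adjoint term reduces to $w$. The added justification of $\invRet^H_y(y)=0$ and the curve-based cross-check are fine extras; the paper takes $\invRet^H_y(y)=0$ implicitly, as part of the lifting being the local inverse of $\Ret^H_y$ around $0$.
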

	\begin{proof}
		From the definition of the retraction, we have
		$\Drm \invRet^H_{y}(y) = (\Drm \Ret^H_{y}(0))^{-1} = \id_{\Tan_{y}\M_H}$
		and hence $\Drm \invRet^H_{y}(y)^{\ast} = \id_{\Tan_{y}\M_H}$. Proposition~\ref{prop:gradient_coarse_model} then yields \eqref{eq:coherence}.
	\end{proof}
	
	The next proposition shows that any sufficient decrease of the coarse
	model produces a~descent direction for the fine objective.
	
	\begin{prop}[Descent direction]\label{prop:descent_dir} 
		Let $\tilde{z}\in\calC_y$ be an approximate solution to \eqref{eq:riem_coarse_model} such that $q(\tilde{z})<q(y)$ and let $d$ be defined as in~\eqref{eq:fine_grid_update_dir}. If $f_H$ is $\Ret^H$-convex on $\calC_y$, then 
		\begin{equation} \label{eq:descent_direction}
			\langle \grad f_h(x), d\rangle_{x} \le q(\tilde{z}) - q(y) < 0.
		\end{equation}
	\end{prop}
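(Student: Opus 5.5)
We begin by moving the inner product to the coarse level with the geometric Galerkin condition~\eqref{eq:riem_galerkin}. Setting $v=\invRet_y^H(\tilde z)\in\Tan_y\M_H$, we have
\[
\langle \grad f_h(x), d\rangle_x = \langle \grad f_h(x), P_y^x v\rangle_x = \langle R_x^y\grad f_h(x), v\rangle_y .
\]
Writing $R_x^y\grad f_h(x) = \grad f_H(y) - w$ by the definition of $w$ in~\eqref{eq:riem_coarse_model}, this becomes
\[
\langle \grad f_H(y), v\rangle_y - \langle w, v\rangle_y .
\]
Here the second term is exactly the correction term of $q$ evaluated at $\tilde z$. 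Equivalently, by Proposition~\ref{prop:coherence}, this is $\langle \grad q(y), v\rangle_y$.

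Next we bound the first term using $\Ret^H$-convexity of $f_H$ on $\calC_y$. Both $y$ and $\tilde z$ lie in $\calC_y$, so Proposition~\ref{prop:R_supp_ineq} applied with base point $y$ gives
\[
\langle \grad f_H(y), \invRet_y^H(\tilde z)\rangle_y \le f_H(\tilde z) - f_H(y).
\]
We also have $\invRet_y^H(y)=0$, so the correction term vanishes at $z=y$ and $q(y)=f_H(y)$. Combining these,
\[
\langle \grad f_h(x), d\rangle_x \le f_H(\tilde z) - \langle w,\invRet_y^H(\tilde z)\rangle_y - f_H(y) = q(\tilde z)-q(y),
\]
and the assumption $q(\tilde z)<q(y)$ yields the strict negativity in~\eqref{eq:descent_direction}.

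The only delicate point is checking the hypotheses of Proposition~\ref{prop:R_supp_ineq}. We need $y\in\calC_y$, that $\invRet_y^H(\tilde z)$ is the lift used in the definition of $\Ret$-convexity, and that $f_H$ is continuously differentiable on $\calC_y$. The first two follow because $\invRet_y^H$ is assumed well-defined on the $\Ret^H$-convex set $\calC_y$ and $\tilde z\in\calC_y$. The third is part of the standing assumptions on $f_H$. Otherwise the argument is a direct chain of identities and a single inequality, so no real obstacle is expected.
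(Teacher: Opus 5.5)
Your proof is correct and follows essentially the same route as the paper. The Galerkin condition \eqref{eq:riem_galerkin} moves the pairing to the coarse level, the definition of $w$ together with $q(y)=f_H(y)$ isolates the correction term, and the $\Ret^H$-supporting inequality of Proposition~\ref{prop:R_supp_ineq} at base point $y$ supplies the single inequality. The paper runs these identities in the opposite direction, expanding $q(z)$ for arbitrary $z\in\calC_y$ before setting $z=\tilde z$. It likewise assumes $y\in\calC_y$ without stating it: this is needed for the supporting inequality and does not follow from well-definedness of $\invRet_y^H$ on $\calC_y$, as your last paragraph suggests.
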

	\begin{proof}
		Using \eqref{eq:riem_galerkin}, \eqref{eq:riem_coarse_model}, and $q(y) = f_H(y)$, we obtain for arbitrary $z \in \calC_y$ that
		\begin{align*}
			q(z) 
			&= f_H(z) - \big\langle \grad f_H(y) - R_{x}^{y}(\grad f_h(x)), \invRet_{y}^H(z)\big\rangle_{y} \\
			&
			= q(y) + \big\langle \grad f_h(x) , P_{y}^{x}(\invRet_{y}^H(z))\big\rangle_{x} - \big(f_H(y) - f_H(z) + \langle  \grad f_H(y), \invRet_{y}^H(z)\rangle_{y}\big)\\
			&\ge q(y) + \big\langle \grad f_h(x) , P_{y}^{x}(\invRet_{y}^H(z))\big\rangle_{x},
		\end{align*}
		where the last inequality follows from Proposition~\ref{prop:R_supp_ineq}. Plugging  $z=\tilde{z}$ into this inequality and using~\eqref{eq:fine_grid_update_dir}, we get~\eqref{eq:descent_direction}.
	\end{proof}
	
	\begin{rem}[Choice of the coarse model objective]\label{rem:coarse_choice}
		The coarse objective $f_H$ in~\eqref{eq:riem_coarse_model} is treated as an~independent discretization of the problem objective on a~coarser mesh, i.e., as a~genuinely \emph{geometric} coarse model, rather than one obtained from $f_h$ via the algebraic pullback $f_H = f_h \circ p$ induced by a~point prolongation map~$p$. This construction provides the flexibility to convexify or regularize $f_H$ to ensure that the resulting coarse correction is a~descent direction {\rm(}cf. Proposition~\textup{\ref{prop:descent_dir}}{\rm)}, a~freedom that is not available in purely algebraic coarse models. In the limiting case $f_H \equiv 0$, the coarse problem~\eqref{eq:riem_coarse_model} reduces to maximizing the linear correction term, thereby recovering the classical Galerkin coarse correction of the linear multigrid method~\textup{\cite{Nash:2000,Trottenberg2001}}.
	\end{rem}
	
	A~further important structural property of the coarse model~\eqref{eq:riem_coarse_model} is its independence of the choice of Riemannian metric on $\M_H$, provided that the vector restriction $R_x^y$ is defined as the metric adjoint of the vector prolongation $P_y^x$. This reflects the intuition that the construction relies only on first-order information, namely the differentials of $f_H$ and $f_h$, which are themselves metric-independent.
	
	\begin{prop}[Metric-independence of the coarse model] \label{prop:metric_independence}
		Let $\langle\cdot, \cdot \rangle_{y,1}$, $\langle\cdot, \cdot \rangle_{y,2}$ and $\langle\cdot, \cdot \rangle_{x,1}$, $\langle\cdot, \cdot \rangle_{x,2}$ be different Riemannian metrics on $\M_H$ and $\M_h$, respectively. Furthermore, for \mbox{$y \in \M_H$} and~$x \in \M_h$, let $P_{y}^{x}$ be a~vector prolongation, and let $R_{x,i}^{y}$, $i = 1,2$, be the adjoint of~$P_{y}^{x}$ with respect to the $i$-th metric, which means that for all $u \in \Tan_{y} \M_H$ and $v\in \Tan_{x} \M_h$, 
		\begin{equation}\label{eq:Galerkin_i}
			\big\langle P_{y}^{x}u, v \big\rangle_{x,i} 
			= \big\langle u, R_{x,i}^{y}v \big\rangle_{y,i} , \qquad i = 1,2.
		\end{equation}
		Then the coarse models \eqref{eq:riem_coarse_model} induced by both metrics coincide and produce the same search direction.
	\end{prop}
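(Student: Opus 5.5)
The plan is to show that the only metric-dependent quantity in~\eqref{eq:riem_coarse_model}, the correction term $\langle w_i, \invRet^H_y(z)\rangle_{y,i}$, is in fact the same for $i=1,2$. Once this holds, the two objectives $q_1$ and $q_2$ agree on $\calC_y$. The feasible set $\calC_y$, the retraction $\Ret^H$ and the lifting $\invRet^H_y$ do not involve the metric, so the two minimization problems are identical. Hence any approximate solution $\tilde z$ of one is an admissible approximate solution of the other. Since $d = P_y^x(\invRet^H_y(\tilde z))$ in~\eqref{eq:fine_grid_update_dir} uses only the fixed prolongation and the lifting map, the search direction is the same as well.

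To compare the correction terms, I will fix an arbitrary $v\in \Tan_y\M_H$ (later $v=\invRet^H_y(z)$) and expand
\[
\langle w_i, v\rangle_{y,i} = \langle \grad_i f_H(y), v\rangle_{y,i} - \langle R^y_{x,i}\grad_i f_h(x), v\rangle_{y,i}.
\]
The first term equals $\Drm f_H(y)[v]$ by the defining property of the Riemannian gradient with respect to the $i$-th metric, so it does not depend on $i$. For the second term, I will use the symmetry of the inner product and the adjoint relation~\eqref{eq:Galerkin_i} with $u=v$:
\[
\langle R^y_{x,i}\grad_i f_h(x), v\rangle_{y,i} = \langle \grad_i f_h(x), P^x_y v\rangle_{x,i} = \Drm f_h(x)[P^x_y v].
\]
This is again independent of $i$, because $P^x_y$ is fixed. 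Therefore $\langle w_i, v\rangle_{y,i} = \Drm f_H(y)[v] - \Drm f_h(x)[P_y^x v]$ for both metrics, and so $q_1 = q_2$.

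I do not expect a genuine obstacle. The one point that needs care is notation: the gradients $\grad f_H$ and $\grad f_h$, the vector $w$, and the restriction $R$ all depend on the metric, so they have to be indexed by $i$ consistently. By contrast, $P_y^x$, $\invRet^H_y$ and $\calC_y$ must be stated explicitly as metric-free. If the step rule (for instance a line search) is also considered, I will note that it depends on $f_h$ and $d$ only through $\langle \grad f_h(x), d\rangle_x = \Drm f_h(x)[d]$, which is metric-independent by the same argument.
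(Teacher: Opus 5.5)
Your proposal is correct and matches the paper's proof. Both use the gradient's defining property and the adjoint relation \eqref{eq:Galerkin_i} to rewrite the correction term as $\Drm f_H(y)[\invRet^H_y(z)] - \Drm f_h(x)[P_y^x(\invRet^H_y(z))]$, which does not depend on the metric, and then note that the same approximate solution $\tilde z$ gives the same direction $d$. Your remark about the line search is not needed for the statement but is consistent with it.
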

	\begin{proof}
		For $i =1,2$, let $\grad_{i} f_H(y)$ and $\grad_{i} f_h(x)$ denote the Riemannian gradients with respect to the corresponding metrics on $\M_H$ and $\M_h$, respectively. Then using~\eqref{eq:Galerkin_i}, the coarse model~\eqref{eq:riem_coarse_model} can be written as 
		\begin{equation*}
			\begin{aligned}
				q(z) &=  f_H(z) - \big\langle \grad_{i} f_H(y)-R_{x,i}^{y}(\grad_{i} f_h(x)), \invRet^H_{y}(z)\big\rangle_{y,i} \\
				&
				= f_H(z)  - \big\langle \grad_{i} f_H(y), \invRet^H_{y}(z)\big\rangle_{y,i} 
				+ \big\langle \grad_{i} f_h(x), P_{y}^{x}(\invRet^H_{y}(z)) \big\rangle_{x,i} \\
				&= f_H(z)  - \Drm f_H(y)[\invRet^H_{y}(z)] + \Drm f_h(x) [P_{y}^{x}(\invRet^H_{y}(z))] .
			\end{aligned}
		\end{equation*}
		Since the last expression is independent of $i$, and this equality holds for both choices of~$i$, the coarse models coincide. Consequently, for the same (approximate) solution $\tilde{z}$, they produce the identical search direction.
	\end{proof}
	
	Proposition~\ref{prop:metric_independence} identifies a key design principle: once $P^x_y$ is fixed, enforcing the Galerkin condition $R^y_x = (P^x_y)^\ast$ yields a~metric-independent coarse model. Thus, the primary degree of freedom lies in the choice of $P^x_y$, which should be guided by problem-specific geometric considerations. It should also be noted that although the coarse model does not dependent on the metric for a~fixed~$P^x_y$, a~prolongation well adapted to the geometry of $\M_h$ and $\M_H$, i.e., their Riemannian metrics, produces better-scaled descent directions.
	In the next section, we develop several structurally distinct approaches to construct vector transfer operators based on point transfer maps.
	
	
	\section{Construction of vector transfer operators}
	\label{sec:vector_transport_choice}
	
	The construction of \emph{vector transfer operators} is dictated by the coarse model~\eqref{eq:riem_coarse_model}, which requires a~consistent exchange of first-order information between discretization levels. Specifically, the coarse model~\eqref{eq:riem_coarse_model} involves restricting the fine-level Riemannian gradient to the coarse tangent space via~$R_x^y$, while the resulting coarse correction is prolongated back via~$P_y^x$ to define a search direction. The construction of these operators is built on point-level transfer maps between the manifolds.
	
	\begin{defn}
		{\em  A smooth surjective map $\restr : \dom(\restr)\subseteq \M_h \to \M_H$ is called a~\emph{point restriction} if its differential $\Drm \restr(x)$ is surjective for all $x\in \dom(\restr)$.}
	\end{defn}
	
	\begin{defn}
		{\em A smooth injective map $\prol : \M_H \to \M_h$ is called a~\emph{point prolongation} if its differential $\Drm \prol(y)$ is injective for all $y \in \M_H$.}
	\end{defn}
	
	Typical choices for point prolongation in the Euclidean setting include interpolation
	i.e., neighborhood-weighted averaging, while point restrictions can be constructed via injection, i.e., subsampling or aggregation-based compression, e.g., full-weighting~\textup{\cite{BriHM00}}. They can be adapted to the geometry of manifolds, e.g., interpolation on the Bernoulli manifold~\textup{\cite{MuePZ23}}.

	Although point restriction and prolongation maps are generally nonlinear, their differentials provide canonical candidates for vector transfer operators: $\Drm r(x)$ for~$R_x^y$ and $\Drm p(y)$ for~$P_y^x$, with the other determined by the geometric Galerkin condition~\eqref{eq:riem_galerkin}, which by 
	Proposition~\ref{prop:descent_dir} ensures a~descent direction. This gives 
	rise to three constructions: two restriction-based 
	(Section~\ref{sec:restriction-based}) and one prolongation-based 
	(Section~\ref{sec:prolongation-based}). When the manifolds admit an ambient 
	embedding, projection-based constructions are also available 
	(Section~\ref{sec:projection-based}). An overview of the different approaches is provided in Figure~\ref{fig:sec4-overview} and Table~\ref{tab:vector_ops} below.
	
	\captionsetup{width=.95\textwidth, skip=3pt}
	\begin{figure}[t]
		\hspace*{6mm}
		\includegraphics{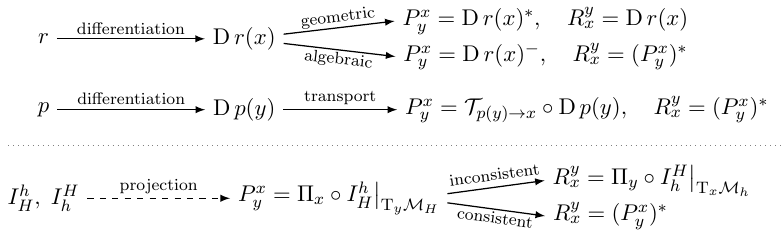}
		\caption{Overview of the vector transfer operator constructions.
			Differential-based vector transfer operators (above dotted line) satisfy the geometric Galerkin condition~\eqref{eq:riem_galerkin} by construction and do not require an~ambient embedding. The projection-based
			approaches (below dotted line) rely on an~ambient embedding, and the geometric Galerkin condition is satisfied only in the consistent variant.}
		\label{fig:sec4-overview}
	\end{figure}
	
	\subsection{Restriction-based approaches} \label{sec:restriction-based}
	
	Both approaches below take $\Drm r(x)$ as the starting point and differ 
	in how the compatible prolongation~$P_y^x$ is derived from it.
	
	\subsubsection{Geometric approach}\label{ssec:r-geom}
	
	For a~point restriction~$r$ and $x \in \dom(r)$, we set $y = r(x)$ and define
	the \emph{restriction-based geometric} vector transfer operators
	\[
	P_y^x = \Drm r(x)^\ast, \qquad\quad R_x^y = \Drm r(x).
	\]
	Since $P_y^x$ and $R_x^y$ are adjoints of each other by construction, the 
	geometric Galerkin condition~\eqref{eq:riem_galerkin} is automatically satisfied.
	The name \emph{geometric} reflects that $P_y^x = \Drm r(x)^\ast$ is defined 
	via the Riemannian metrics on $\M_h$ and~$\M_H$. Since $P_y^x$ changes 
	with these metrics, the assumption of Proposition~\ref{prop:metric_independence} 
	is not satisfied and the coarse model is metric-dependent. This dependence 
	can, however, be exploited: choosing a metric that encodes the structure 
	of the problem induces a~vector prolongation $P_y^x$ that acts as a preconditioner for the coarse correction. This approach extends to arbitrary $x \in \dom(r)$ and $y \in \M_H$ by composing $\Drm r(x)$ with a~vector transport 
	$\calT_{r(x)\to y}\colon \Tan_{r(x)}\M_H \to \Tan_y\M_H$ on the coarse level.
	
	\subsubsection{Algebraic approach}\label{ssec:r-alg}
	For a point restriction $r$ and $x \in \dom(r)$, we consider once again $y = r(x)$ and construct the \emph{restriction-based algebraic} vector transfer operators
	\begin{equation} \label{eq:r_algebraic_prol_restr}
		P_y^x = \Drm \restr(x)^-, \qquad\quad
		R_x^y = (P_y^x)^\ast,    
	\end{equation}
	where $\Drm \restr(x)^-$ denotes a~right inverse of $\Drm \restr(x)$.
	The name \emph{algebraic} reflects that the vector prolongation $P_y^x = \Drm r(x)^-$ is 
	metric-independent, since any right inverse of a linear map can be 
	constructed without reference to a metric. By 
	Proposition~\ref{prop:metric_independence}, the coarse model~\eqref{eq:riem_coarse_model} is therefore also metric-independent. Moreover, this construction is consistent with the special case $f_h = f_H \circ r$, as the following proposition shows.
	
	\begin{prop}\label{prop:algebraic1} Let $x \in \dom(r)$ and $y = \restr(x)$ for a~point restriction $\restr$.
		If $f_h = f_H \circ \restr$ on $\dom(r)$, then for the vector transfer operators \eqref{eq:r_algebraic_prol_restr}, the coarse model objective in~\eqref{eq:riem_coarse_model} satisfies $q(z) = f_H(z)$ for all $z\in \calC_{y}$.
	\end{prop}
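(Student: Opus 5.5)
The plan is to show that the correction vector $w$ in~\eqref{eq:riem_coarse_model} vanishes, i.e., $R_x^y(\grad f_h(x)) = \grad f_H(y)$. Then the correction term $\langle w,\invRet^H_y(z)\rangle_y$ is zero for every $z\in\calC_y$, and hence $q(z)=f_H(z)$.

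First, I will use the chain rule. Since $f_h = f_H\circ\restr$ on the open set $\dom(\restr)$, for every $v\in\Tan_x\M_h$ we have
\[
\Drm f_h(x)[v] = \Drm f_H(y)[\Drm\restr(x)[v]].
\]
In terms of gradients this reads
\[
\langle \grad f_h(x), v\rangle_x = \langle \grad f_H(y), \Drm\restr(x)[v]\rangle_y,
\]
so $\grad f_h(x) = \Drm\restr(x)^\ast(\grad f_H(y))$, with the adjoint taken with respect to the metrics at $x$ and $y$.

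Second, I will compute $R_x^y(\grad f_h(x))$ by testing against an arbitrary $u\in\Tan_y\M_H$. Using $R_x^y=(P_y^x)^\ast$ and $P_y^x=\Drm\restr(x)^-$, I get
\[
\langle u, R_x^y\grad f_h(x)\rangle_y = \langle P_y^x u, \grad f_h(x)\rangle_x = \Drm f_h(x)[\Drm\restr(x)^- u] = \Drm f_H(y)[\Drm\restr(x)\Drm\restr(x)^- u].
\]
By the right-inverse property $\Drm\restr(x)\Drm\restr(x)^- = \id_{\Tan_y\M_H}$, the last expression equals $\Drm f_H(y)[u] = \langle u,\grad f_H(y)\rangle_y$. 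Since $u$ is arbitrary, $R_x^y(\grad f_h(x)) = \grad f_H(y)$, so $w=0$.

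The only point needing care is the order of composition. A right inverse of $\Drm\restr(x)$ gives $\Drm\restr(x)\circ\Drm\restr(x)^-=\id$ on the coarse tangent space, which is exactly the composition that appears. The reverse composition would only be a projection. I expect no further obstacle: the conclusion holds for all $z\in\calC_y$ because $w$ is independent of $z$.
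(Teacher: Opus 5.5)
Your proof is correct and follows the paper's argument: both show $w=0$ by combining the chain rule $\grad f_h(x)=\Drm\restr(x)^\ast(\grad f_H(y))$ with the right-inverse identity $R_x^y\circ\Drm\restr(x)^\ast=\big(\Drm\restr(x)\circ\Drm\restr(x)^-\big)^\ast=\id_{\Tan_y\M_H}$. The paper writes this as a composition of adjoints, whereas you verify it by pairing with an arbitrary $u\in\Tan_y\M_H$; this is the same computation.
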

	\begin{proof}
		Since $\Drm \restr(x)$ is surjective, its right inverse $P_{y}^{x}$ is injective, and $R_{x}^{y} = (P_{y}^{x})^\ast$ is surjective. 
		By the definition of the right inverse, we obtain that
		\[
		R_{x}^{y} \circ \Drm \restr(x)^\ast 
		= \big(\Drm \restr(x)\circ\Drm \restr(x)^-\big)^\ast
		= \id_{\Tan_{y}\M_H}.
		\]
		By chain rule, we further have 
		\begin{align*}
			\grad f_h(x) &= \grad (f_H \circ r)(x) 
			= \Drm \restr(x)^\ast(\grad f_H(\restr(x))).
		\end{align*}
		Therefore, 
		\[
		w 
		= \grad f_H(y)-\big(R_{x}^{y}\circ \Drm \restr(x)^\ast\big)(\grad f_H(y))
		= 0,
		\]
		and consequently $q(z) = f_H(z)$  for all $z \in \calC_{y}$.
	\end{proof}
	
	\subsection{Prolongation-based approach}\label{sec:prolongation-based}
	The differential $\Drm p(y)$ is a~natural choice for the vector prolongation. However, for an arbitrarily point $x \in \M_h$, $\Drm p(y)$ does not map into $\Tan_x\M_h$ unless $x = p(y)$. To handle this \emph{base-point mismatch}, we compose $\Drm p(y)$ with a~vector transport $\calT_{p(y)\to x}\colon \Tan_{p(y)}\M_h \to \Tan_x\M_h$ and define
	\begin{equation} \label{eq:p_geom_prol_restr}
		P_y^x = \calT_{p(y)\to x} \circ \Drm p(y), \qquad R_x^y = (P_y^x)^\ast.
	\end{equation}
	The geometric Galerkin condition~\eqref{eq:riem_galerkin} is satisfied by construction. Natural choices for $\calT_{p(y)\to x}$ include the identity when $x = p(y)$, or any vector transport along a~curve connecting $p(y)$ to~$x$. In embedded manifolds, the
	orthogonal projection $\Pi_x\colon V_h \to \Tan_x\M_h$ provides a~convenient computational option. When all tangent spaces are canonically identified with~$V_h$~-- as in the box-constrained Bernoulli-manifold setting of~\cite{MuePZ23}, used again in Section~\ref{sec:Binary-CC}, where $\M_h \subset \R^n$ is open 
	and $\Tan_x\M_h = \R^n$ for all $x\in\M_h$
	~-- no correction is needed and 
	$\calT_{p(y)\to x} = \id_{\Tan_x\M_h}$ for all $x$. Since $\Drm p(y)$ does not require an~inner product, $P_y^x$ is metric-independent whenever $\calT_{p(y)\to x}$ is, and by Proposition~\ref{prop:metric_independence}, the coarse model~\eqref{eq:riem_coarse_model} is also metric-independent.
	
	In the special case $x = p(y)$ and $f_H = f_h \circ p$, the coarse model objective reduces exactly to the coarse-level objective, as established in the following proposition.
	
	\begin{prop}\label{prop:algebraic2}
		Let $y \in \M_H$ and $x = p(y)$ for a~point prolongation~$p$. If 
		$f_H = f_h \circ p$ on an~$\Ret^H$-convex set $\calC_y \subseteq \M_H$, 
		then the coarse model objective in~\eqref{eq:riem_coarse_model}, constructed using  the vector transfer operators~\eqref{eq:p_geom_prol_restr} with 
		\mbox{$\calT_{p(y)\to x} = \id_{\Tan_x\M_h}$}, satisfies $q(z) = f_H(z)$ for all 
		$z \in \calC_y$.
	\end{prop}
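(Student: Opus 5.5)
The plan is to show that the correction vector $w$ in \eqref{eq:riem_coarse_model} vanishes, exactly as in the proof of Proposition~\ref{prop:algebraic1}. Once $w=0$, the correction term $\langle w, \invRet^H_y(z)\rangle_y$ is zero for every $z\in\calC_y$, and $q(z)=f_H(z)$ follows.

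With $x=p(y)$ and $\calT_{p(y)\to x}=\id_{\Tan_x\M_h}$, the transfer operators \eqref{eq:p_geom_prol_restr} reduce to
\[
P_y^x=\Drm p(y), \qquad R_x^y=\Drm p(y)^\ast,
\]
where the adjoint is taken with respect to $\langle\cdot,\cdot\rangle_y$ and $\langle\cdot,\cdot\rangle_x$.

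Next I compute $\grad f_H(y)$ by the chain rule. Since $f_H=f_h\circ p$ on $\calC_y$, which is an open neighborhood of $y$ (or at least one on which differentiation at $y$ makes sense), we have for every $u\in\Tan_y\M_H$:
\[
\langle \grad f_H(y),u\rangle_y=\Drm f_h(p(y))[\Drm p(y)[u]]=\langle \grad f_h(x),\Drm p(y)u\rangle_x=\langle \Drm p(y)^\ast \grad f_h(x),u\rangle_y .
\]
Therefore
\[
\grad f_H(y)=\Drm p(y)^\ast\big(\grad f_h(x)\big)=R_x^y\big(\grad f_h(x)\big).
\]
Substituting into the definition of $w$ gives $w=\grad f_H(y)-R_x^y(\grad f_h(x))=0$, and hence $q(z)=f_H(z)$ for all $z\in\calC_y$.

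The only point needing care is that the chain-rule identity requires $f_H=f_h\circ p$ to hold on a neighborhood of $y$, and not merely at the single point $y$. This is guaranteed because the hypothesis is stated on all of $\calC_y$, which contains $y$ (indeed, if $\calC_y$ is not open, smoothness of both sides together with equality along all retraction curves from $y$ inside $\calC_y$ suffices for tangent directions). Beyond this, the argument is a direct computation with no real obstacle.
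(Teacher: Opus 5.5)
Your proposal is correct and follows the paper's proof essentially verbatim. With $\calT_{p(y)\to x}=\id$ you have $R_x^y=\Drm p(y)^\ast$, the chain rule gives $\grad f_H(y)=\Drm p(y)^\ast(\grad f_h(x))=R_x^y(\grad f_h(x))$, hence $w=0$ and $q=f_H$ on $\calC_y$. Your caveat that $f_H=f_h\circ p$ must hold near $y$, or at least along the retraction curves inside $\calC_y$, is a point the paper leaves implicit.
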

	\begin{proof}
		Applying the chain rule to $f_H = f_h \circ p$, we obtain 
		\[
		\grad f_H(y) = \Drm p(y)^\ast(\grad f_h(p(y))) = R_x^y(\grad f_h(x)).
		\]
		Then $w = \grad f_H(y) - R_x^y(\grad f_h(x)) = 0$, and consequently 
		$q(z) = f_H(z)$ for all $z \in \calC_y$.
	\end{proof}
	
	\subsection{Projection-based approaches}\label{sec:projection-based}
	
	Unlike the differential-based constructions above, this approach does not 
	rely on a point map $r$ or $p$, but instead uses ambient restriction and prolongation operators combined with tangent projections~\cite{SutV2021}. Specifically, suppose that $\M_h$ and $\M_H$ are embedded in the Euclidean spaces $V_h$ and $V_H$, respectively. Let
	\[
	I_H^h:V_H\to V_h, \qquad\qquad 
	I_h^H:V_h\to V_H
	\] 
	be the prolongation and restriction on the ambient spaces, and let 
	\[
	\Pi_x:V_h \to \Tan_x\M_h, \qquad\quad 
	\Pi_y:V_H \to \Tan_y\M_H
	\]
	denote the orthogonal projections onto the corresponding tangent spaces. We 
	define the \emph{projection-based inconsistent} vector prolongation and restriction as
	\begin{equation} \label{eq:projection_prol_restr}
		P_y^x = \Pi_x \circ I_H^h\big|_{\Tan_y\M_H}, \qquad\quad
		R_x^y = \Pi_y \circ I_h^H\big|_{\Tan_x\M_h},
	\end{equation}
	respectively. Since $\Pi_x$ depends on the Riemannian metric on $\M_h$, $P_y^x$ is metric-dependent. 
	
	In general, the vector transfer operators~\eqref{eq:projection_prol_restr} do not satisfy the geometric Galerkin condition~\eqref{eq:riem_galerkin}. To enforce it, one can keep $P_y^x$ as in~\eqref{eq:projection_prol_restr} and set $R_x^y = (P_y^x)^\ast$; we refer to this as the \emph{projection-based consistent} approach.
	
	In multigrid methods, standard prolongation and restriction operators typically satisfy 
	$I_h^H = (I_H^h)^\ast$ with respect to suitable inner products on 
	$V_h$ and~$V_H$. When, in addition, $\Pi_x$ and~$\Pi_y$ are orthogonal 
	with respect to the metrics on $\M_h$ and~$\M_H$ induced by these inner 
	products, both projection-based constructions coincide.
	
	\subsection{Comparison}
	\label{sec:comparison}
	
	The various vector prolongation and restriction operators are summarized in Table~\ref{tab:vector_ops}.
	By construction, all approaches, except for the projection-based inconsistent one, provide operators that satisfy the geometric Galerkin condition~\eqref{eq:riem_galerkin}.  
	
	\begin{table}[ht]
		\captionsetup{width=0.95\textwidth, skip=6pt}
		\centering
		\scalebox{.88}{
			\begin{tabular}{l l l l l c c l}
				\toprule
				Version & Name & Base  & $P_y^x$ & $R_x^y$ & Galerkin & $P_y^x$ metric- & Ref. \\[-1mm]
				&   & points & & & condition & independ. & \\
				\midrule\midrule
				I  & restriction-based & $x$ arb.      & $\Drm r(x)^\ast$ & $\Drm r(x)$
				& \checkmark & \text{\sffamily X} & new \\[-.51mm]
				& geometric & $y = r(x)$& & & & & \\ 
				\midrule
				II & restriction-based   &  $x$ arb.    & $\Drm r(x)^-$ & $(P_y^x)^\ast$
				& \checkmark & \checkmark & new \\[-.51mm]
				& algebraic  & $y = r(x)$ & & & & & \\
				\midrule
				III & prolongation- & $x, y$ arb. & $\calT_{\hat{x}\to x}\circ\Drm p(y)$ & $(P_y^x)^\ast$
				& \checkmark & \checkmark${}^\dag$ & \cite{MuePZ23} \\[-.51mm]
				& based & $\hat{x} = p(y)$ & & & & & \\
				\midrule
				IV  & projection-based        & $x,y$ arb. & $\Pi_x\circ I_H^h\big|_{\Tan_y\M_H}$
				& $\Pi_y\circ I_h^H\big|_{\Tan_x\M_h}$
				& \text{\sffamily X} & \text{\sffamily X} & \cite{SutV2021} \\[-.51mm]
				& inconsistent  & & & & & & \\
				\midrule
				V   & projection-based   & $x,y$ arb. & $\Pi_x\circ I_H^h\big|_{\Tan_y\M_H}$
				& $(P_y^x)^\ast$
				& \checkmark & \text{\sffamily X} & new \\[-.51mm]
				& consistent  & & & & & & \\
				\bottomrule
			\end{tabular}
		}
		\caption{Vector transfer operator constructions. 
			${}^\dag$$\Drm p(y)$ is metric-free, however $\calT_{\hat{x}\to x}$ may introduce  metric-dependence when $x \neq \hat{x}=p(y)$.}
		\label{tab:vector_ops}
	\end{table}
	
	The vector transfer operators constructed from differentials of the point maps and from tangent space projections differ in how tangent spaces across discretization levels are related. The projection-based approaches depend on the choice of a~Riemannian metric and are less compatible with the multilevel discretization structure. In contrast, the differential-based approaches are naturally aligned with this structure but depend on the choice of point maps, which may introduce discretization-dependent inconsistencies on general manifolds. When the point maps are linear and the Galerkin condition $I_h^H = (I_H^h)^\ast$ holds with respect to the Euclidean inner products, all constructions reduce to the classical linear multigrid operators~$I_H^h$ and~$I_h^H$. On general manifolds, they represent complementary ways of defining vector transfer operators.
	
	Comparing the prolongation-based and restriction-based constructions of vector transfer operators, the two approaches differ in their choice of primary point map. In the prolongation-based setting, a~coarse-to-fine map is specified first, focusing on the reconstruction of the fine-scale structure. In contrast, the restriction-based approach starts from a fine-to-coarse map that compresses fine-scale information, making it more natural for defining a~coarse model and for ensuring consistency between the discretization levels and the induced vector transfer operators.
	
	Within the restriction-based framework, both the geometric and algebraic approaches start from the differential $\Drm \restr$, but differ in how the associated vector prolongations are defined, namely either as an~adjoint or as a~right inverse of~$\Drm \restr$. The geometric construction emphasizes compatibility with the underlying metric, whereas the algebraic construction prioritizes exact recoverability at the linearized level. In particular, the latter yields a~metric-independent vector prolongation $P_y^x$, but this operator is generally not unique. 
	
	
	\section{Multilevel Riemannian optimization}
	\label{sec:geometric-multilevel}
	
	In this section, we build on the Riemannian coarse model introduced in Section~\ref{sec:coarse_model} and the vector transfer operators developed in Section~\ref{sec:vector_transport_choice} to formulate a~multilevel Riemannian optimization algorithm. The proposed algorithm is rooted in the Euclidean multilevel approach of~\cite{Nash:2000,WenGoldfarb2010} and extends the Riemannian multilevel methods of~\cite{SutV2021} for low-rank manifolds and of~\cite{MuePZ23} for box-constrained problems on the Bernoulli manifold, both of which arise as special cases of the present framework. Here, we provide a~unified treatment for general Riemannian manifolds, together with a~convergence analysis that is not available in either of these prior works. We first discuss the two-level variant and then extend it recursively to the multilevel setting.
	
	\subsection{Two-level Riemannian optimization approach}
	\label{ssec:two-level}
	
	Building on the Riemannian coarse model~\eqref{eq:riem_coarse_model} and a
	choice of vector transfer operators~$P_{y_k}^{x_k}$ and~$R_{x_k}^{y_k}$ from Section~\ref{sec:vector_transport_choice}, we now introduce a~two-level Riemannian optimization method for solving~\eqref{eq:min}.
	For a~given point restriction $r$, starting with an~initial guess $x_0\in \dom(r)$, the method proceeds iteratively. At iteration $k$, whenever the conditions for invoking the coarse model are satisfied, the coarse correction step is performed to update the fine-level iterate, followed by the gradient step to refine the solution. As described in Section~\ref{sec:coarse_model}, the coarse correction step consists in restricting the current iterate $x_k\in \M_h$ to a~coarse level, $y_k=\restr(x_k)$, and approximately solving the coarse model
	\begin{equation} \label{eq:riem_coarse_model_k}
		\min \limits_{z \in \calC_{y_k}} q_k(z) \coloneqq f_H(z) - \big\langle w_k, \invRet^H_{y_k}(z)\big\rangle_{y_k} \enskip \text{with }\; w_k = \grad f_H(y_k) - R_{x_k}^{y_k}\big(\grad f_h(x_k)\big)
	\end{equation}
	to obtain an approximate minimizer $\tilde{z}_k \in \calC_{y_k}\subseteq\mathcal{M}_H$ of the 
	coarse model. The coarse correction direction is then extracted via the 
	lifting map as $\invRet_{y_k}^H(\tilde{z}_k) \in \Tan_{y_k}\mathcal{M}_H$, 
	and transferred to the fine level by the vector prolongation to yield the 
	search direction 
	\[
	d_k = P_{y_k}^{x_k}\bigl(\invRet_{y_k}^H(\tilde{z}_k)\bigr) \in \Tan_{x_k}\mathcal{M}_h.
	\]
	The iteration is subsequently updated via $x_{k+1} = \Ret_{x_k}^h(\alpha_kd_k)$ with an~appropriate step size~$\alpha_k>0$. In the gradient step, the new iteration is refined by $x_{k+1} = \Ret_{x_k}^h\big(\!-\alpha_k \grad f_h(x_k)\big)$. This procedure is repeated iteratively until a~prescribed convergence criterion is satisfied, such as a sufficiently small Riemannian gradient norm or a~negligible decrease in the objective value. The resulting two-level Riemannian optimization method is summarized in Algorithm~\ref{alg:two_level_Riemannian}.
	
	\begin{algorithm}[th]
		\SetKwInOut{Input}{input}
		\SetKwRepeat{Repeat}{repeat}{until}
		\SetKwIF{If}{ElseIf}{Else}{if}{then}{else if}{else}{end if}
		\SetAlgoLined
		\DontPrintSemicolon
		\Input{\parbox[t]{\linewidth}}{Manifolds  $\M_h$ and $\M_H$, objectives $f_h$ and $f_H$, retraction~$\Ret^h$, lifting~$\invRet^H$, point restriction map $r$, vector prolongation $P$, vector restriction $R$, and initial point $x_0 \in \dom(r)$.
		}
		\BlankLine
		
		\For{$k=0,1,\ldots$}{
			\uIf(\hfill\tcp*[h]{coarse~correction~step}){coarse model condition \eqref{eq:est_eta} at $x_k$ is satisfied \textbf{{\em and}} {\em(}$k\!=\!0$ \textbf{{\em or}} $d_{k-1}\!=\!-\grad f_h(x_{k-1})${\em)}
			}{
				Compute $y_k = \restr(x_k)$.\;
				Compute an~approximate solution $\tilde{z}_k\! \in \M_H$ to \eqref{eq:riem_coarse_model_k} such that $q_k (\tilde{z}_k) < q_k(y_k)$. \;
				Compute the search direction $d_k = P_{y_k}^{x_k}(\invRet_{y_k}^H(\tilde{z}_k))$.    
			}
			\Else(\hfill\tcp*[h]{gradient step}){Compute the search direction $d_k = -\grad f_h(x_k)$.\;}
			Compute $\alpha_k > 0$ such that $f_h(\Ret_{x_k}^h(\alpha_kd_k)) < f_h(x_k)$ and $\Ret_{x_k}^h(\alpha_kd_k) \in \dom(r)$.\; 
			Update $x_{k+1} = \Ret_{x_k}^h(\alpha_kd_k)$.\;
		}
		\caption{Two-level Riemannian optimization method}
		\label{alg:two_level_Riemannian}
	\end{algorithm}
	
	In line~2 of this algorithm, the condition for accepting the coarse model is given by
	\begin{equation} \label{eq:est_eta}
		\big\|R_{x_k}^{y_k}\big(\grad f_{h}(x_{k})\big)\big\|_{y_k} 
		\geq \max(\eta\big\|\grad f_{h}(x_{k})\big\|_{x_k},\,\mu), \qquad \eta\in(0,1), \;\mu > 0.
	\end{equation}
	This condition is adapted from \cite{WenGoldfarb2010} to the Riemannian setting; see also \cite{MuePZ23}. It ensures that a~coarse correction is performed only when the coarse level retains a~significant amount of first-order information available on the fine level (quantified by $\eta$) and is sufficiently far from stationarity (controlled by $\mu$).
	In addition, we impose the structural condition
	\begin{equation}\label{eq:no_two_C}
		\text{if $d_{k}$ is a~coarse correction step, then } d_{k-1}=-\grad f_h(x_{k-1}),
	\end{equation}
	which forbids two consecutive coarse corrections and ensures that 
	every coarse correction (except at the very first iteration) is preceded by a~gradient step (smoothing) on the fine level. This requirement is incorporated into 
	Algorithm~\ref{alg:two_level_Riemannian} and plays a~key role in the 
	convergence analysis in Section~\ref{ssec:convergence}.
	
	By Proposition~\ref{prop:coherence}, we have $\grad q_k(y_k) = R^{y_k}_{x_k}(\grad f_h(x_k))$, and therefore the left-hand side of~\eqref{eq:est_eta} is precisely the norm of the Riemannian gradient of the coarse model at $y_k$. If this quantity is small, the vector restriction operator has discarded a~substantial part of the relevant descent information from the fine level, making a~coarse correction unlikely to be effective. In such cases, it is preferable to perform a fine-level gradient step instead.
	
	In line~4 of Algorithm~\ref{alg:two_level_Riemannian}, the coarse model~\eqref{eq:riem_coarse_model_k} is typically solved only approximately, as its main purpose is to provide an efficient correction rather than a highly accurate solution. Starting from a~restricted version of the fine-level iterate $y_k=r(x_k)$, one usually applies a~few steps of a~Riemannian optimization method, such as a~(preconditioned) gradient descent, on the coarse manifold $\M_H$. The use of appropriate vector transfer operators ensures that the geometric structure of the problem is respected. Moreover, the efficiency of the coarse model solver can be improved by choosing the Riemannian metric on~$\M_H$ adaptively, for instance by incorporating first- or second-order information from~$f_H$.
	
	To compute the step size $\alpha_k$ in line~10 of Algorithm~\ref{alg:two_level_Riemannian}, one may apply an~exact, Armijo, Wolfe, or Hager-Zhang line search adapted to the Riemannian setting~\cite{AbsMS08,SutV2021,ZhaH04}. More advanced step size strategies, such as a~non-monotone line search algorithm combined with the alternating Barzilai-Borwein technique~\cite{WenY13,ZhaH04} or an~adaptive procedure from~\cite{AnsM25}, require access to Riemannian gradients at two successive iterates. Consequently, they are only applicable to two consecutive gradient steps within the two-level algorithm. Nevertheless, such strategies can still be effectively utilized when solving the coarse model~\eqref{eq:riem_coarse_model_k}.
	
	\subsection{Multilevel approach}
	\label{ssec:miltilevel}
	
	The two-level Riemannian optimization algorithm extends naturally to the multilevel setting by introducing a~hierarchy of discretizations of increasing dimension and applying the two-level procedure recursively across multiple levels. Analogously to multigrid methods for linear systems, e.g.,~\cite{BriHM00}, this leads to \mbox{V-}, W-, F-, or adaptive cycle schemes that balance computational cost and convergence efficiency. In particular, the coarse model is solved recursively on a~hierarchy of successively coarser models. 
	
	For $\ell_{\rm f}\ge2$, let $h_1 > \ldots > h_{\ell_{\rm f}}$ denote a~sequence of mesh sizes corresponding to increasingly finer discretization levels. Associated with each level $\ell=1,\ldots,\ell_{\rm f}$, we consider a~manifold~$\M_{h_\ell}$ and the corresponding objective function $f_{h_\ell}:\M_{h_\ell}\to\R$. Let $\Ret^{h_\ell}$ and $\invRet^{h_\ell}$ denote, respectively, the retraction and lifting maps on $\M_{h_\ell}$. Furthermore, for $x_{k,\ell}\in\M_{h_\ell}$ and a~point restriction map $\restr_\ell : \dom(\restr_\ell) \subseteq \M_{h_\ell} \to \M_{h_{\ell -1}}$, the associated vector restriction operator $R_{x_{k,\ell}}^{y_{k,\ell}}$, where $y_{k,\ell} = \restr_\ell(x_{k,\ell})$, maps tangent vectors from fine to coarse levels, while the vector prolongation operator $P_{y_{k,\ell}}^{x_{k,\ell}}$ transfers corrections from coarse to fine levels. We refer to this multilevel construction as a~\emph{discretization hierarchy} $h_1 > \ldots > h_{\ell_{\rm f}}$.
	
	The multilevel Riemannian optimization method is presented in Algorithm~\ref{alg:restriction_based_multilevel}. It recursively applies the two-level correction across the discretization hierarchy combined with fine-level smoothing. At each level, once accepted, the coarse problem is approximately solved using the same procedure, resulting in a~recursive adaptive cycle-type scheme. This structure allows information to be efficiently propagated between levels, while reducing computational cost. To minimize $f_{h_{\ell_{\rm f}}}$ at the finest level, the entire discretization hierarchy $h_1 > \dots > h_{\ell_{\rm f}}$ is provided to Algorithm~\ref{alg:restriction_based_multilevel}, together with the objective $f^{\ell_{\rm f}} = f_{h_{\ell_{\rm f}}}$ and the initial point $x_{0,\ell_{\rm f}} \in \dom(r_{\ell_{\rm f}})\subseteq\M_{h_{\ell_{\rm f}}}$.
	Here, $f^{\ell}$ denotes the objective supplied to the algorithm at level~$\ell$: at the finest level, $f^{\ell_{\rm f}}=f_{h_{\ell_{\rm f}}}$, while for $\ell<\ell_{\rm f}$, $f^{\ell}$ is the coarse model constructed at the next finer level, built from the independently discretized objective~$f_{h_\ell}$.

	\begin{algorithm}[t!]
		\SetKwInOut{Input}{input}
		\SetKwRepeat{Repeat}{repeat}{until}
		\SetKwIF{If}{ElseIf}{Else}{if}{then}{else if}{else}{end if}
		\SetAlgoLined
		\DontPrintSemicolon
		\Input{\parbox[t]{\linewidth}}{Discretization hierarchy with mesh sizes $h_1>\dots > h_{\ell}$, objective $f^{\ell}$, initial point $x_{0,\ell} \in \dom(r_{\ell})$.
		}
		\BlankLine
		\For{$k=0,1,\ldots$}{
			\uIf(\hfill\tcp*[h]{coarse correction step}){coarse model condition at $x_{k,\ell}$ is satisfied \textbf{{\em and}} {\em(}$k = 0$ \textbf{{\em or}} $d_{k-1,\ell} = -\grad f^{\ell}(x_{k-1,\ell})${\em)}}{
				Compute $y_{k,\ell} = \restr_\ell(x_{k,\ell})$.\;
				Construct the coarse model $q_{k,\ell}(z)=q_k\big(z; x_{k,\ell},y_{k,\ell},  f^{\ell}, f_{h_{\ell-1}}, \invRet^{h_{\ell-1}}_{y_{k,\ell}}, R_{x_{k,\ell}}^{y_{k,\ell}}\big)$
				according to~\eqref{eq:riem_coarse_model_k}.\;
				\uIf(\hfill\tcp*[h]{standard two-level on the coarsest two levels}){$\ell = 2$}{
					Compute an~approximate minimizer $\tilde{z}_{k,\ell}\in \M_{h_{\ell-1}}$ of $q_{k,\ell}$ such that $q_{k,\ell}(\tilde{z}_{k,\ell}) < q_{k,\ell}(y_{k,\ell})$ using any Riemannian optimization method.\;
				}
				\Else(\hfill\tcp*[h]{recursive multilevel on reduced hierarchy}){
					Compute an~approximate minimizer $\tilde{z}_{k,\ell}\in \M_{h_{\ell-1}}$ of $q_{k,\ell}$ such that $q_{k,\ell}(\tilde{z}_{k,\ell}) < q_{k,\ell}(y_{k,\ell})$ using multilevel Riemannian optimization with the discretization hierarchy $h_1>\dots > h_{\ell -1}$, the objective $f^{\ell-1} = q_{k,\ell}$,  and the initial point $x_{0,\ell-1} = y_{k,\ell}$.\;
				}
				Compute the search direction $d_{k,\ell} = P_{y_{k,\ell}}^{x_{k,\ell}}\big(\invRet_{y_{k,\ell}}^{h_{\ell-1}}(\tilde{z}_{k,\ell})\big)$.    
			}
			\Else(\hfill\tcp*[h]{gradient step}){Compute the search direction $d_{k,\ell} = -\grad f^{\ell}(x_{k,\ell})$.\; 
			}
			Compute the step size $\alpha_k > 0$ such that $f^{\ell}\big(\Ret_{x_{k,\ell}}^{h_\ell}(\alpha_k d_{k,\ell})\big) < f^{\ell}(x_{k,\ell})$.\; 
			Update $x_{k+1,\ell} = \Ret_{x_{k,\ell}}^{h_\ell}\big(\alpha_k d_{k,\ell}\big)$.\;
		}
		\caption{Multilevel Riemannian optimization method}
		\label{alg:restriction_based_multilevel}
	\end{algorithm}
	
	\subsection{Convergence analysis}
	\label{ssec:convergence}
	
	We aim now to analyze the convergence of the multilevel Riemannian optimization method. It is sufficient to consider only the two-level formulation, since the multilevel scheme can be viewed as a~two-level method in which the coarse model is solved recursively by the same two-level Riemannian optimization procedure. The resulting convergence analysis is independent of the number of levels, as it relies only on classifying each step as a~gradient or coarse correction step, regardless of how the latter was generated.
	
	Consider the two-level Riemannian optimization algorithm that generates a~sequence \linebreak\mbox{$(x_k)_{k \in \N} \subset \M_h$} via $x_{k+1} = \Ret_{x_k}^h(\alpha_k d_k)$, where each search direction $d_k \in \Tan_{x_k}\M_h$ is one of the following two types:
	\begin{itemize}[leftmargin=2em]
		\item \emph{G-type} (gradient step) if $d_k = -\grad f_h(x_k)$;
		\item \emph{C-type} (coarse correction step) if 
		$d_k \neq -\grad f_h(x_k)$, but $d_k$ is a descent direction, i.e., $\langle \grad f_h(x_k), d_k\rangle_{x_k} < 0$.
	\end{itemize}
	By construction~\eqref{eq:no_two_C}, Algorithm~\ref{alg:two_level_Riemannian} does not permit two consecutive C-type steps. Consequently, for every C-type index $k$, the preceding step satisfies $d_{k-1}=-\grad f_h(x_{k-1})$, while the successor $d_{k+1}$, if generated, is necessarily of G-type. Accordingly, we introduce the index sets
	\[
	\calK_G = \big\{ k\in\N \enskip : \enskip d_k = -\grad f_h(x_k) \big\}, \qquad
	\calK_C = \big\{ k\in\N \enskip : \enskip d_k \neq -\grad f_h(x_k) \big\}.
	\]
	We make the following assumptions on the objective function $f_h$:
	\begin{itemize}[itemsep=0.2em]
		\item[\bf{A1:}] (Sublevel-set compactness) For all $x_0 \in \M_h$, 
		$\mathcal S_{x_0} \coloneqq \{x \in \M_h : f_h(x) \le f_h(x_0)\}$ is compact.
		\item[\bf{A2:}] (Smoothness and lower boundedness) $f_h \in C^1(\M_h)$ and $f_h$ is bounded below on $\M_h$.
		\item[\bf{A3:}] (Lipschitz continuous differentiability) 
		The pullback $f_h\circ \Ret_x^h$ is uniformly Lipschitz continuously differentiable on $\mathcal S_{x_0}$, i.e., there exists $L_{f_h}>0$ for all $x \in \mathcal S_{x_0}$,  
		and all $v_1,v_2\in \Tan_x\M_h$, $\|\nabla f_h(\Ret_x^h(v_1)) - \nabla 
		f_h(\Ret_x^h(v_2))\|_{x} \le L_{f_h}\|v_1-v_2\|_x$, where 
		$\nabla f_h(\Ret_x^h(v))$ denotes the gradient of $f_h\circ \Ret_x^h$ at $v$ with respect to the inner product on~$\Tan_x\M_h$.
	\end{itemize}
	In addition, we impose the following assumptions on the step sizes $\alpha_k$:
	\begin{itemize}[itemsep=0.2em]
		\item[\bf{A4:}]
		The step size $\alpha_k$ satisfies the Riemannian Wolfe conditions
		\begin{subequations}\label{eq:Wolfe}
			\begin{align}
				f_h(x_{k+1}) - f_h(x_k) &\;\leq\; c_1\,\alpha_k\,\big\langle \grad f_h(x_k), d_k\big\rangle_{x_k},
				\label{eq:Wolfe1}\\
				\big\langle \grad f_h(x_{k+1}), \Drm\Ret_{x_k}^h(\alpha_k d_k)[d_k]\big\rangle_{x_{k+1}}
				&\;\geq\; c_2\,\big\langle \grad f_h(x_k), d_k\big\rangle_{x_k},
				\label{eq:Wolfe2}
			\end{align}
		\end{subequations}
		where $0 < c_1 < c_2 < 1$,  and $\Drm\Ret_{x_k}^h(\alpha_k d_k)$ denotes the differentiated retraction vector transport.
		
		\item[\bf{A5:}] There exists $\alpha_{\max}>0$ such that $\alpha_k \le \alpha_{\max}$ for all $k\in\calK_G$.
	\end{itemize}
	Under these assumptions, we establish the global convergence of the two-level Riemannian optimization algorithm. 
	
	\begin{thm}\label{thm:conv}
		Let Assumptions~\textup{\textbf{A1}--\textbf{A5}} be fulfilled. Then the sequence $(x_k)_{k\in\N}$ generated by the two-level Riemannian optimization algorithm satisfies
		\begin{equation}\label{eq:conv_grad}
			\lim_{k \to \infty} \|\grad f_h(x_k)\|_{x_k} = 0.
		\end{equation}
	\end{thm}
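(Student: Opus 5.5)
The proof will use a Riemannian Zoutendijk argument on the pullbacks $\hat f_k \coloneqq f_h\circ\Ret^h_{x_k}$. It has two parts. First, the gradient norms vanish along the G-type subsequence $\calK_G$. Second, this is transferred to the C-type indices $\calK_C$ using the structural condition~\eqref{eq:no_two_C}.

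\emph{Step 1 (Zoutendijk bound).} By \eqref{eq:Wolfe1} and the descent property of every $d_k$, the values $f_h(x_k)$ decrease monotonically, so all iterates lie in $\calS_{x_0}$. Assumption~\textbf{A3} therefore applies at every $x_k$. The chain rule gives $\langle \nabla \hat f_k(\alpha_k d_k), d_k\rangle_{x_k} = \langle \grad f_h(x_{k+1}), \Drm\Ret^h_{x_k}(\alpha_k d_k)[d_k]\rangle_{x_{k+1}}$, while $\nabla\hat f_k(0)=\grad f_h(x_k)$ because $\Drm\Ret^h_{x_k}(0)=\id$. Subtracting $\langle \grad f_h(x_k),d_k\rangle_{x_k}$ from both sides of \eqref{eq:Wolfe2} and applying the Lipschitz bound of \textbf{A3} with $v_1=\alpha_k d_k$ and $v_2=0$ yields
\[
\alpha_k \;\ge\; \frac{(1-c_2)\,|\langle \grad f_h(x_k),d_k\rangle_{x_k}|}{L_{f_h}\|d_k\|_{x_k}^2}.
\]
Inserting this into \eqref{eq:Wolfe1} gives
\[
f_h(x_k)-f_h(x_{k+1}) \;\ge\; \frac{c_1(1-c_2)}{L_{f_h}}\,\cos^2\theta_k\,\|\grad f_h(x_k)\|_{x_k}^2,
\]
where $\theta_k$ is the angle between $-\grad f_h(x_k)$ and $d_k$. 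Summing over $k$ and using the lower bound from \textbf{A2} gives $\sum_k \cos^2\theta_k\|\grad f_h(x_k)\|^2_{x_k}<\infty$. For $k\in\calK_G$ we have $\cos\theta_k=1$. Hence $\sum_{k\in\calK_G}\|\grad f_h(x_k)\|_{x_k}^2<\infty$, and in particular $\|\grad f_h(x_k)\|_{x_k}\to 0$ along $\calK_G$.

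\emph{Step 2 (C-type indices).} If $\calK_C$ is finite, Step 1 already gives \eqref{eq:conv_grad}. Otherwise, take $k\in\calK_C$ with $k\ge1$. By \eqref{eq:no_two_C} we have $k-1\in\calK_G$, so $x_k=\Ret^h_{x_{k-1}}(v_{k-1})$ with $v_{k-1}=-\alpha_{k-1}\grad f_h(x_{k-1})$. Assumption~\textbf{A5} bounds $\|v_{k-1}\|_{x_{k-1}}\le\alpha_{\max}\|\grad f_h(x_{k-1})\|_{x_{k-1}}$, and the right-hand side tends to $0$ by Step 1. It remains to show that $\|\grad f_h(x_k)\|_{x_k}=\|\grad f_h(\Ret^h_{x_{k-1}}(v_{k-1}))\|$ tends to $0$ as well. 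I intend to argue by continuity and compactness. The map $(x,v)\mapsto \|\grad f_h(\Ret^h_x(v))\|_{\Ret^h_x(v)}$ is continuous on the tangent bundle. It is therefore uniformly continuous on the compact set $\{(x,v): x\in\calS_{x_0},\ \|v\|_x\le 1\}$ given by \textbf{A1}. Its value at $(x,0)$ is $\|\grad f_h(x)\|_x$. Combining $\|\grad f_h(x_{k-1})\|\to0$ with $\|v_{k-1}\|\to0$ then gives $\|\grad f_h(x_k)\|_{x_k}\to 0$ along $\calK_C$.

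An alternative for Step 2 is to use \textbf{A3} directly. This gives $\|\Drm\Ret^h_{x_{k-1}}(v_{k-1})^\ast\grad f_h(x_k)\|\le (1+L_{f_h}\alpha_{\max})\|\grad f_h(x_{k-1})\|$. Turning this into a bound on $\|\grad f_h(x_k)\|$ itself requires a uniform lower bound on the differentiated retraction near the zero section, which again follows from compactness.

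The main obstacle is Step 2. The Zoutendijk bound gives no control on $\cos\theta_k$ for coarse corrections, so C-steps cannot be handled directly. This is exactly why the rule forbidding two consecutive coarse corrections and the step-size bound~\textbf{A5} are needed: they ensure each C-type iterate lies within a vanishing retraction distance of a G-type iterate. The remaining care is to make the continuity argument uniform via the compact set $\calS_{x_0}$. Since the multilevel analysis only classifies each step as G- or C-type, the same argument covers Algorithm~\ref{alg:restriction_based_multilevel}.
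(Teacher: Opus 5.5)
Your proposal is correct and follows essentially the same route as the paper. A Zoutendijk bound (you derive it directly from \textbf{A3}--\textbf{A4}, where the paper cites Ring--Wirth) gives $\sum_{k\in\calK_G}\|\grad f_h(x_k)\|_{x_k}^2<\infty$, and the C-type indices are then handled through their G-type predecessors using \eqref{eq:no_two_C}, \textbf{A5}, compactness of $\calS_{x_0}$, and continuity of $\Ret^h$ and $\grad f_h$ on the tangent bundle. The remaining differences are cosmetic: you close the last step by uniform continuity on $\{(x,v): x\in\calS_{x_0},\,\|v\|_x\le 1\}$, which implicitly uses that $\|v\|_x\to 0$ over a compact base forces convergence to the zero section, whereas the paper argues by contradiction along a convergent subsequence of predecessors.
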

	\begin{proof}
		We may assume $\grad f_h(x_k) \neq 0$ for all $k\in\N$, since otherwise the algorithm terminates at a~stationary point and there is nothing to prove. Then, because each $d_k$ is a~descent direction and Assumptions~\textbf{A2}--\textbf{A4} hold,
		the Riemannian Zoutendijk theorem~\cite[Thm.~2]{RiWi2012} yields
		\begin{equation}\label{eq:zout}
			\sum_{k=0}^{\infty}
			\frac{\langle \grad f_h(x_k), d_k\rangle_{x_k}^{2}}{\|d_k\|_{x_k}^{2}}
			\;<\; \infty.
		\end{equation}
		Since the summands in~\eqref{eq:zout} are non-negative, the series may be decomposed into two complementary convergent subseries corresponding to the G-type and C-type iterations:
		\begin{equation}\label{eq:split}
			\sum_{k\,\in\,\calK_G}
			\frac{\langle \grad f_h(x_k), d_k\rangle_{x_k}^{2}}{\|d_k\|_{x_k}^{2}}
			\;+\;
			\sum_{k\,\in\,\calK_C}
			\frac{\langle \grad f_h(x_k), d_k\rangle_{x_k}^{2}}{\|d_k\|_{x_k}^{2}}
			\;<\; \infty.
		\end{equation}
		For every $k\in\calK_G$, we have $d_k = -\grad f_h(x_k)$, and hence 
		the G-part of \eqref{eq:split} reduces to 
		\begin{equation}\label{eq:Gsum}
			\sum_{k\in\calK_G} \|\grad f_h(x_k)\|_{x_k}^{2} \;<\; \infty.
		\end{equation}
		Since the terms in~\eqref{eq:Gsum} are non-negative and the series converges, it follows that 
		\begin{equation}\label{eq:Gconv}
			\|\grad f_h(x_k)\|_{x_k} \rightarrow 0\qquad \text{as } \;k\rightarrow\infty, \; k\in\calK_G.
		\end{equation}
		
		We next prove that
		\begin{equation}\label{eq:Cconv}
			\|\grad f_h(x_k)\|_{x_k} \rightarrow 0\qquad \text{as } \;k\rightarrow\infty, \; k\in\calK_C.
		\end{equation}
		To this end, we argue by contradiction. Suppose, to the contrary, that this convergence fails on the C-type index set $\calK_C$. Then there is a subsequence $(k_l)_{l \in \N}\subset \calK_C$ and an~$\varepsilon > 0$ such that
		\begin{equation}\label{eq:contra}
			\|\grad f_h(x_{k_l})\|_{x_{k_l}} \geq \varepsilon
			\qquad\text{for all } l \in \N.
		\end{equation}
		As two consecutive C-type steps are forbidden, every $k_l$ is preceded by a
		G-type index. Define
		\[
		\hat{x}_l \coloneqq x_{k_l - 1}
		\qquad\text{so that}\qquad
		d_{k_l - 1} = -\grad f_h(\hat{x}_l)
		\quad\text{(G-type predecessor).}
		\]
		The Wolfe condition \eqref{eq:Wolfe1} implies $f_h(x_{k+1}) \le f_h(x_k)$ for all $k\in\N$, and hence all $\hat{x}_l$ lie in the compact sublevel set from Assumption~\textbf{A1}. By compactness, there exists a~subsequence $(\hat{x}_{l_j})_{j \in \N}$ and a~point $\hat{x}_\infty \in \M_h$ such that 
		\begin{equation}\label{eq:yinf}
			\hat{x}_{l_j} \rightarrow \hat{x}_\infty \qquad\text{as } \;j \rightarrow \infty.
		\end{equation}
		Because $\hat{x}_l = x_{k_l - 1}$ are all G-type iterates, \eqref{eq:Gconv} implies that $\|\grad f_h(\hat{x}_l)\|_{\hat{x}_l} \rightarrow 0$ as $l \rightarrow \infty$.
		In particular, along the subsequence we have
		$\|\grad f_h(\hat{x}_{l_j})\|_{\hat{x}_{l_j}} \rightarrow 0$ as $j \rightarrow \infty$.
		By continuity of $\grad f_h$ and of the Riemannian metric, it follows that 
		\begin{equation}\label{eq:gradzero}
			\|\grad f_h(\hat{x}_\infty)\|_{\hat{x}_\infty} = 0.
		\end{equation}
		
		Further, consider the iterate at the C-type index from the subsequence 
		\[
		x_{k_{l_j}} 
		= \Ret_{\hat{x}_{l_j}}^h\!\bigl(\beta_j \,d_{k_{l_j}-1}\bigr)
		= \Ret_{\hat{x}_{l_j}}^h\!\bigl(-\beta_j \,\grad f_h(\hat{x}_{l_j})\bigr),
		\]
		where $\beta_j = \alpha_{k_{l_j}-1}$ denotes the G-type step size.
		By Assumption~\textbf{A5}, $\beta_j \le \alpha_{\max}$, and therefore
		\begin{equation}\label{eq:tangentnorm}
			\|-\beta_j\,\grad f_h(\hat{x}_{l_j})\|_{\hat{x}_{l_j}}
			\le \alpha_{\max}\,\|\grad f_h(\hat{x}_{l_j})\|_{\hat{x}_{l_j}}
			\rightarrow0 \qquad\text{as } \;j \rightarrow \infty.
		\end{equation}
		Using \eqref{eq:yinf} and~\eqref{eq:tangentnorm}, we conclude that $(\hat{x}_{l_j}, -\beta_j\,\grad f_h(\hat{x}_{l_j}))$ converges in the tangent bundle~$\Tan\M_h$ to $(\hat{x}_\infty, 0)$. By continuity of the retraction $\Ret^h$, it follows that
		\begin{equation}\label{eq:zlim}
			\Ret_{\hat{x}_{l_j}}^h\!\bigl(-\beta_j\,\grad f_h(\hat{x}_{l_j})\bigr)
			\rightarrow \Ret_{\hat{x}_\infty}^h(0) = \hat{x}_\infty \qquad\text{as } \; j \rightarrow \infty.
		\end{equation}
		Combining \eqref{eq:zlim} with continuity of $\grad f_h$ and of the Riemannian metric, and using~\eqref{eq:gradzero}, we obtain that
		\[
		\|\grad f_h(x_{k_{l_j}})\|_{x_{k_{l_j}}}
		\rightarrow \|\grad f_h(\hat{x}_\infty)\|_{\hat{x}_\infty}
		= 0 \qquad\text{as } \; j \rightarrow \infty.
		\]
		However, $\big(x_{k_{l_j}}\big)_{j\in\N}$ is a subsequence of 
		$(x_{k_l})_{l\in\N}$, on which~\eqref{eq:contra} enforces
		\mbox{$\|\grad f_h(x_{k_{l_j}})\|_{x_{k_{l_j}}} \!\ge \varepsilon$} for all $j\in\N$, yielding a~contradiction. Thus, \eqref{eq:Cconv} holds.
		
		Finally, \eqref{eq:Gconv} and \eqref{eq:Cconv}, together with the fact that the index sets $\calK_G$ and $\calK_C$ partition~$\N$, imply~\eqref{eq:conv_grad}.
	\end{proof}
	
	Theorem~\ref{thm:conv} shows that the Riemannian gradient norms vanish along the entire sequence $(x_k)_{k\in\N}$, despite the alternating structure of G- and C-type iterations. In particular, the method ensures first-order stationarity in the limit, so that every accumulation point of~$(x_k)$ is a~critical point of $f_h$.
	
	The convergence proof requires every C-type direction to be a~descent direction for $f_h$. By Proposition~\ref{prop:descent_dir}, this condition is satisfied whenever the coarse objective $f_H$ is locally \mbox{$\Ret^H$-con}\-vex on the candidate set $\calC_{y_k}$ in a~neighborhood of the current coarse iterate~$y_k$, a~property that can often be ensured through the convexification flexibility discussed in Remark~\ref{rem:coarse_choice}. 
	In the absence of local $\Ret^H$-convexity, the algorithm guarantees descent by replacing any non-descent \mbox{C-type} direction $d_k$ with $-\grad f_h(x_k)$. Consequently, Theorem~\ref{thm:conv} remains applicable without imposing any additional assumptions on~$f_H$.
	
	
	\section{Applications}
	\label{sec:applications}
	
	In this section, we turn to concrete applications to illustrate the multilevel Riemannian optimization framework developed so far. Our focus is on energy minimization problems with a~common variational structure, arising in nonlinear quantum models such as the Kohn--Sham and Gross--Pitaevskii equations, as well as in binary image segmentation, formulated here as a~continuous cuts problem. Despite their shared structure, these problems differ substantially in both their analytical properties and the geometry of their constraint manifolds. The Kohn--Sham and Gross--Pitaevskii models are subject to quadratic normalization constraints, giving rise to the Stiefel and ellipsoid manifolds~-- compact and without boundary. In contrast, the continuous cuts problem uses a box constraint, yielding the Bernoulli manifold~-- non-compact, flat, and endowed with a~Fisher--Rao metric that is singular at the boundary. Together, these test examples are well suited for assessing the proposed multilevel framework across a~broad range of geometric regimes and discretization strategies. 
	
	For each application, we begin with a brief description of the underlying model and its variational formulation. We then outline the spatial discretization strategies used in the numerical approximation. Subsequently, we collect the geometric concepts that play a~central role in the numerical treatment, with particular emphasis on the associated manifold structure, the construction of point transfer maps and vector transfer operators, and the resulting coarse models. Finally, we present the results of numerical experiments that demonstrate the performance and qualitative behavior of the multilevel Riemannian optimization schemes. 
	
	Different applications were implemented using programming languages and software tools best suited to their respective models and discretizations. This enabled efficient use of existing libraries and computational frameworks. The source code is publicly available at
	\begin{center}
		\url{https://github.com/Riemannian-Multilevel}
	\end{center}
	
	
	\subsection{Kohn--Sham problem}
	Let $\Omega = \mathbb R^3$ and consider a molecule with $m\in \mathbb N$ electrons. For orbitals $\varphi = (\varphi_1, \dots , \varphi_m) \in [H^1(\Omega,\C)]^m$, the Kohn--Sham energy minimization problem is given by
	\begin{equation}\label{eq:dis_minE_KS}
		\hspace*{-1mm}\begin{array}{l}
			\displaystyle{    \min \calE^{\KS}(\varphi) 
				= \frac{1}{2} \sum\limits_{i = 1}^m \!\int_\Omega \!\|\nabla \varphi_i\|^2\dxi 
				\!+\!\! \int_\Omega\!\vartheta_{\rm n}(\xi) \rho_\varphi\dxi 
				\!+\! \frac{1}{2}\! \int_{\Omega}\!
				\vartheta_{\rm H} (\rho_\varphi)\rho_\varphi \dxi
				\!+\!\! \int_{\Omega}\! \epsilon_{\rm xc}(\rho_\varphi)\rho_\varphi \dxi}\\[4mm]
			\text{subject to } \langle \varphi_i, \varphi_j\rangle_{L^2(\Omega,\C)}= \delta_{ij},\enskip i,j=1,\ldots, m,
		\end{array}
	\end{equation}
	with the electron density $\rho_\varphi = \sum_{i = 1}^m |\varphi_i|^2$. The total Kohn--Sham energy $\calE^{\KS}$ consists of 
	the kinetic energy, 
	the electron–nucleus interaction energy with the nuclear potential $\vartheta_{\rm n}$, 
	the mean-field electron-electron interaction energy, where the Hartree potential generated by the density is given by 
	\[
	(\vartheta_{\rm H}(\rho_\varphi))(\xi) = \int_\Omega \frac{\rho_\varphi(s)}{\|\xi-s\|}\ds,
	\]
	and the exchange-correlation energy in the local density approximation expressed through the exchange-correlation energy per particle $\epsilon_{\rm xc}$. The orthonormality constraints on the components of $\varphi$ mean that each orbital represents an~independent quantum state with unit probability.
	
	Using Euler--Lagrange calculus, constrained critical points of $\,\calE^{\KS}$ can be characterized by the solution to the nonlinear eigenvalue problem (NEVP)
	\begin{equation}\label{eq:nlevp_KS}
		\begin{aligned}
			\calA_{\varphi}^{\KS}\varphi = \varphi\, \Lambda, \qquad
			\big[\langle \varphi_i, \varphi_j\rangle_{L^2(\Omega,\C)}\big]_{i,j=1}^{p,p} = I_p,
		\end{aligned}
	\end{equation}
	with the Kohn--Sham Hamiltonian 
	\[
	\calA_{\varphi}^{\KS} = - \frac{1}{2}\Delta  + \vartheta_{\rm n}  + \vartheta_{\rm H}(\rho_\varphi)  + \vartheta_{\rm xc}(\rho_\varphi),
	\]
	where \mbox{$\vartheta_{\rm xc}(\rho_\varphi)=\frac{\partial}{\partial\rho_\varphi}(\epsilon_{\rm xc}(\rho_\varphi)\rho_\varphi)$} is the exchange-correlation potential. 
	Specifically, the eigenvalues of the Hermitian Lagrange multiplier $\Lambda\in\C^{p\times p}$ correspond to the eigenvalues of $\calA_{\varphi}^{\KS}$. Common approaches for solving the Kohn--Sham energy minimization problem~\eqref{eq:dis_minE_KS} or the NEVP~\eqref{eq:nlevp_KS} include self-consistent field (SCF) iterations \cite{CanKL21,Roo51} and Riemannian optimization techniques \cite{AltPS22,AltPS24,PetPS25,SchRNB09}, treating~\eqref{eq:dis_minE_KS} as a~problem on the Stiefel or Grassmann manifold.
	
	\subsubsection{Spatial discretization by a plane wave method}
	
	For spatial discretization of the Kohn--Sham energy minimization problem \eqref{eq:dis_minE_KS} on a bounded domain $\Omega \subset\R^3$ with periodic boundary conditions, we use the plane wave method on a~Cartesian grid $n_1\times n_2 \times n_3$. Both the spatial and frequency domain discretizations have $n = n_1n_2n_3$ degrees of freedom. In practice, the number of degrees of freedom for the frequency domain is often reduced by a~cutoff energy~$E_{\rm cut}$ for the kinetic energy of the frequency modes, resulting in a number $\le n$ but living on the same spatial grid. This allows us to choose the coarsening factor between levels and the number of levels arbitrarily. The discrete counterpart of \eqref{eq:dis_minE_KS} is given by
	\begin{equation}\label{eq:minE_KS}
		\begin{array}{l}
			\displaystyle{    \min E^{\rm KS}(\Phi) 
				= \frac{1}{2} \trace (\Phi^{\ast} L\, \Phi) + \trace( \Phi^{\ast}  F V_{\rm n} F^{\ast} \Phi)  + 2 \pi \omega P_\Phi^\top F^{\ast}L^+FP_\Phi 
				+ \omega P_\Phi^\top\epsilon_{\rm xc}(P_{\Phi})
			} \\[4mm]
			\text{subject to } \Phi^{\ast} \Phi = I_p,
		\end{array}
	\end{equation}
	where $\Phi\in\C^{n\times m}$ denotes the plane-wave coefficient matrix of the Kohn--Sham orbitals, 
	$\Phi^{\ast}$ is transpose and complex conjugate of $\Phi$, 
	$P_\Phi  = \frac{1}{\omega}\mathrm{diag}\big(F^{\ast}\Phi (F^{\ast}\Phi)^{\ast}\big) \in \R^n$ with $\omega = |\Omega|/n$ is the vector of electron density values on the spatial grid,  
	$F$ denotes the unitary discrete Fourier transform mapping function from the spatial domain to plane-wave coefficients, and~$F^{\ast}$ is its inverse. 
	Furthermore, the diagonal matrix $L$ represents the spectral discretization of the negative Laplacian in the plane-wave basis, $L^+$ denotes its Moore–Penrose pseudoinverse, 
	$2\pi \omega\, F^{\ast}L^+F$ corresponds to the discrete Coulomb operator, 
	$V_{\rm n}$ is the diagonal matrix containing the pointwise evaluation of~$\vartheta_{\rm n}$ on the spatial grid, 
	and the exchange correlation energy $\epsilon_{\rm xc}$ is evaluated componentwise. The discrete Kohn--Sham Hamiltonian then reads
	\[
	A_{\Phi} = \frac{1}{2}L +  FV_{\rm n}F^{\ast} + 4\pi \mathrm{Diag}\big(F^{\ast} L^+F P_\Phi\big) 
	+\mathrm{Diag}\big(\vartheta_{\rm xc}(P_\Phi)\big),
	\]
	where $\mathrm{Diag}(v)$ denotes the diagonal matrix with entries of a vector $v$ on the diagonal. 
	
	The feasibility set for the discrete Kohn--Sham energy minimization problem \eqref{eq:minE_KS} is given by the complex Stiefel matrix manifold
	\[
	\St(m,n)=\big\{ \Phi\in\C^{n\times m} \enskip :\enskip \Phi^{\ast}\Phi=I_m\big\}.
	\]
	Geometric concepts related to this manifold are summarized in Table~\ref{tab:Stiefel}. They can be derived by straightforward extension of the real case, e.g.,~\cite{ShuA23}.
	
	\begin{table}
		\begin{tabular}{lll}
			\toprule 
			complex Stiefel manifold & $\St(m,n)=\big\{ \Phi\in\C^{n\times m} \enskip :\enskip \Phi^{\ast}\Phi=I _m\big\}$
			\tabularnewline
			\midrule 
			tangent space & $\Tan_\Phi\,\St(m,n) = \big\{ V\in\C^{n\times m} \enskip :\enskip V^{\ast}\Phi+\Phi^{\ast}V=0 \big\}$
			\tabularnewline
			\midrule 
			Frobenius metric & $\langle U,V\rangle_{\Phi, \rm F} = \Re e\big(\trace(U^{\ast} V)\big)$, \; $U,V\in\Tan_{\Phi}\,\St(m,n)$ 
			\tabularnewline
			\midrule 
			orthogonal projection & $\Pi_{\Phi}(Y)=I-\tfrac{1}{2}\Phi(\Phi^{\ast}Y+Y^\ast \Phi)$ 
			\tabularnewline
			\midrule 
			Frobenius Riemannian gradient & $\grad_{\rm F} E^{\KS}(\phi) = 2(A_\Phi\,\Phi - \Phi(\Phi^{\ast} A_\Phi\,\Phi))$
			\tabularnewline
			\midrule 
			polar retraction & $\Ret_\Phi(V)=(\Phi+V)(I+V^{\ast}V)^{-1/2}$\tabularnewline
			\midrule 
			polar lifting map & $\invRet_\Phi(Z)=ZY_Z-\Phi\;$ with $\;(\Phi^{\ast}Z)Y_Z+Y_Z(\Phi^{\ast}Z)^{\ast}=2I_m$
			\tabularnewline
			\midrule 
			differentiated lifting map& $\Drm\invRet_\Phi(Z)[U]=UY_Z-Z\hat{Y}_U$ 
			\tabularnewline
			& with $\;(\Phi^{\ast}Z)\hat{Y}_U+\hat{Y}_U(\Phi^{\ast}Z)^{\ast}=\Phi^{\ast}UY_Z+Y_ZU^{\ast}\Phi$
			\tabularnewline
			\bottomrule
		\end{tabular}
		\caption{Geometric concepts for the complex Stiefel manifold $\St(m,n)$.}
		\label{tab:Stiefel}
	\end{table}
	
	\subsubsection{Vector transfer operators}
	
	Let $\St(m,n_H)$ and $\St(m,n_h)$ be the Stiefel manifolds corresponding to the coarse and fine discretizations defined by a~lower and higher cutoff energies~$E_{\rm cut}^H$ and~$E_{\rm cut}^h$, respectively. Let \mbox{$I_H^h:\C^{n_H\times m}\rightarrow\C^{n_h\times m}$} denote the zero-padding operator that maps the plane-wave coefficients on the coarse grid to their low-frequency coefficients on the fine grid, while all new high-frequency coefficients are set to zero. The corresponding restriction operator $I_h^H:\C^{n_h\times m}\rightarrow\C^{n_H\times m}$ is chosen as 
	$I_h^H=(I_H^h)^{\ast}$, where the adjoint is taken with respect to the (real) Frobenius inner products in $\C^{n_h\times m}$ and $\C^{n_H\times m}$. This operator is a~simple truncation that retains only low-frequency information. We define the point restriction map as
	\[
	\restr(\Phi)
	=\big(\pi_{\St(m,n_H)}\!\circ I_h^H\big)(\Phi), 
	\qquad\Phi\in\St(m,n_h), 
	\]
	where $\pi_{\St(m,n_H)}(Y)=Y(Y^{\ast}Y)^{-1/2}$ denotes the Frobenius-norm projection of $Y\in\C^{n_H\times m}$ onto the Stiefel manifold $\St(m,n_H)$. Note that this projection is defined only on a~set of full-rank matrices, which restricts the domain of the point restriction $\restr$ to 
	\[
	\dom(\restr)=\big\{\Phi\in\St(m,n_h)\enskip :\enskip \rank(I_h^H\Phi)=m\big\}.
	\]
	
	The derivative of~$r$ at $\Phi$ along $V\in\Tan_{\Phi}\St(m,n_h)$ is given by 
	\[
	\Drm \restr(\Phi)[V]  =\big(I_h^H V - (I_h^H \Phi)\big((I_h^H \Phi)^{\ast}I_h^H \Phi\big)^{-1/2}X_V \big)\big((I_h^H \Phi)^{\ast}I_h^H \Phi\big)^{-1/2}, 
	\]
	where $X_V$ solves the Lyapunov equation
	\[ 
	\big((I_h^H \Phi)^{\ast}I_h^H \Phi\big)^{1/2}X_V + X_V \big((I_h^H \Phi)^{\ast}I_h^H \Phi\big)^{1/2} = (I_h^H \Phi)^{\ast}I_h^H V + (I_h^H V)^{\ast}I_h^H \Phi.
	\]
	Note that for $\Phi\in\dom(r)$, 
	the matrix $(I_h^H \Phi)^{\ast}I_h^H \Phi$ is Hermitian positive definite, and hence this equation is uniquely solvable. One right inverse of $\Drm \restr(\Phi)$ has the form  
	\[
	(\Drm \restr(\Phi))^-(U) =  I_H^h U \big((I_h^H \Phi)^{\ast}I_h^H \Phi\big)^{1/2}, \qquad U\in\Tan_{r(\Phi)} \St(m,n_H).
	\]
	Defining the algebraic vector prolongation operator
	$P_{\restr(\Phi)}^{\Phi}=\Drm \restr(\Phi)^-$, we determine the corresponding vector restriction operator
	\begin{equation}\label{eq:Restr_KS}
		R_\Phi^{\restr(\Phi)}(V) 
		= \big((\Drm \restr(\Phi))^-\big)^{\ast}(V)
		= \Pi_{r(\Phi)} \big(I_h^H V \big((I_h^H \Phi)^{\ast}I_h^H \Phi\big)^{1/2}\big),
	\end{equation}
	where $\Pi_{r(\Phi)}$ is the orthogonal projector onto $\Tan_{r(\Phi)}\St(m,n_H)$ defined in Table~\ref{tab:Stiefel}.
	
	\subsubsection{Riemannian coarse model} Consider the iterate $\Phi_k\in\St(m,n_h)$ and the restricted iterate $\Psi_k = \restr(\Phi_k)\in\St(m,n_H)$.
	The polar lifting map is defined as 
	\begin{equation}\label{eq:Lift_KS}
		\invRet_{\Psi_k}^H (Z) = Z Y_Z - \Psi_k,
	\end{equation}
	and the adjoint of its derivative is given by 
	\[
	\big(\Drm \invRet_{\Psi_k}^H(Z)\big)^{\ast}(U)
	= \Pi_Z\big((U-\Psi_k X_{Z,U})Y_Z\big),
	\]
	where $\Pi_Z$ is the orthogonal projector onto $\Tan_Z\St(m,n_H)$, and $Y_Z$ and $X_{Z,U}$ are solutions to the Lyapunov equations
	\begin{align}
		(\Psi_k^{\ast}Z)\; Y_Z \enskip + \enskip Y_Z \;(\Psi_k^{\ast}Z)^{\ast} & = 2I_m, \label{eq:lyap1}\\
		(\Psi_k^{\ast} Z)^{\ast} X_{Z,U} + X_{Z,U} (\Psi_k^{\ast}Z) & = Z^{\ast}U+U^{\ast}Z, \label{eq:lyap2}
	\end{align}
	respectively. Note that when $Z=\Psi_k$, these equations have unique solutions. By continuity, if $Z$ is sufficiently close to $\Psi_k$ so that $\|\Psi_k^*Z-I\|_{\rm F}<1$, then the real parts of the eigenvalues of $\Psi_k^*Z$ remain positive, ensuring that both \eqref{eq:lyap1} and \eqref{eq:lyap2} also admit unique solutions. Consequently, the Riemannian coarse model for the Kohn--Sham problem takes the form 
	\[
	\min_{Z\in \calC_{\Psi_k}} q_k^{\rm KS}(Z) = E^{\rm KS}
	_H(Z) - \big\langle \grad_{\rm F} E^{\rm KS}_H(\Psi_k) - R_{\Phi_k}^{\Psi_k} (\grad_{\rm F} E^{\rm KS}_h(\Phi_k)) , \invRet^H_{\Psi_k}(Z) \big\rangle_{\Psi_k, \rm F}
	\]
	with the vector restriction $R_{\Phi_k}^{\Psi_k}$ and  the lifting $\invRet^H_{\Psi_k}$ defined in \eqref{eq:Restr_KS} and \eqref{eq:Lift_KS}, respectively. 
	
	\subsubsection{Numerical experiments}
	
	The numerical experiments for the Kohn--Sham problem were conducted in \texttt{julia} using the density functional toolkit DFTK.jl~\cite{DFTKjcon} for solid state materials. 
	We consider a~gallium arsenide (GaAs) crystal on a~periodic lattice in the faced-centered cubic phase with lattice constant \mbox{$a = 10.68$}~Bohrs. The model employs the local density approximation of the exchange-correlation energy, spin $(\uparrow,\downarrow)$-pairs for the valence electrons, and semicore Goedecker--Teter--Hutter pseudopotentials to represent the core electrons of the gallium and arsenic atoms. 
	A~$4\times4\times4$ Monkhorst--Pack $k$-point grid is used to discretize the Brillouin zone. The discretization parameter for the plane-wave basis is the cutoff energy $E_{\rm cut}$, which gives an~upper bound for the kinetic energy. The cutoff energies for the different discretization levels, together with the resulting numbers of degrees of freedom, are reported in Table~\ref{tab:ndofs}.
	
	\begin{table}[th]
		\centering
		\scalebox{.98}{
			\begin{tabular}{c|c|c|c|c|c|c|c}
				\mbox{$E_{\rm cut}\,(\mathrm{Ha})$} &  
				10 & 
				16 & 
				25 & 
				40 & 
				63 & 
				101 & 
				160 \\\hline 
				\#dofs & 
				18,552 & 
				37,312 & 
				72,576 & 
				147,516 & 
				291,372 & 
				590,512 & 
				1,178,556
			\end{tabular}
		}
		\caption{GaAs model: number of degrees of freedom for the different cutoff energies.}
		\label{tab:ndofs}
	\end{table}
	
	\begin{table}[t]
		\centering
		\scalebox{.98}{
			\begin{tabular}{c|c}
				scheme & $E_{\rm cut}$ \\ \hline
				2-level & 10, 160 \\
				3-level & 10, 40, 160 \\
				4-level & 10, 25, 63, 160 \\
				7-level & \quad 10, 16, 25, 40, 63, 101, 160
			\end{tabular}
		}
		\caption{GaAs model: multilevel setup for different numbers of levels.}
		\label{tab:levels}
	\end{table}
	
	To carry out fine-level updates, we employ a~preconditioned Riemannian gradient descent method based on the $H^1$-metric  
	\[
	\langle U,V\rangle_{\Phi, H^1} = \Re e\big(\trace(U^{\ast}\!(L+I)V)\big), \qquad U,V\in\Tan_\Phi\,\St(m,n),
	\]
	where a shifted negative Laplacian acts as a~preconditioner for the Riemannian gradients. Subsequently, we compare several multilevel optimization approaches with 2, 3, 4, and~7 levels, as detailed in Table~\ref{tab:levels}, the \mbox{$H^1$-Rie}\-man\-nian gradient descent (H1RGD), for which the step size strategy is adopted from~\cite{AltPS24}, and the $H^1$-Riemannian conjugate gradient method (H1RCG), for which the step size strategy and conjugate gradient parameters are adopted from~\cite{PetPS25}. 
	
	In all methods, iterations are terminated once the Frobenius norm of the residual 
	\[
	\res(\Phi_k)
	= \grad_{\rm F} E^{\KS}(\Phi_k)
	= A_{\Phi_k}\Phi_k -\Phi_k(\Phi_k^*A_{\Phi_k}\Phi_k^{})
	\]
	falls below the tolerance~$\tol = 10^{-8}$. The reference energy $E_{\rm ref}$ is computed using SCF with a~tolerance of $10^{-12}$. Within the multilevel schemes, the coarse models are solved using nested multilevel schemes based on H1RGD with a~tolerance of $\max \{10^{-2} \|\res(\Phi_k)\|_{\rm F}, 10^{-8} \}$, except for the coarsest level, where H1RCG is employed. Note that, for brevity, we consider only the restriction-based algebraic approach for the construction of the vector prolongation and restriction (Version~II), as alternative strategies produced almost identical results\footnote{An experiment demonstrating this is provided in the GitHub repository.}.
	
	\begin{figure}[t]
		\includegraphics{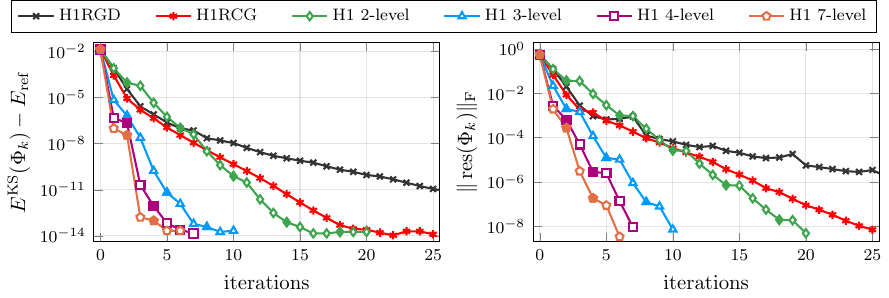}
		\caption{GaAs model: convergence histories of the energy error (left) and residual norm (right) versus iteration count for the different optimization schemes. Filled markers indicate coarse correction steps. All schemes exhibit linear convergence. H1RCG converges faster than H1RGD, while  multilevel methods achieve further acceleration, with improved convergence as the number of levels increases.} 
		\label{fig:GaAs_iter}
	\end{figure}
	
	The convergence plots for the energy error $E^{\KS}(\Phi_k)-E_{\rm ref}$ and the residual norm $\|\res(\Phi_k)\|_{\rm F}$ versus iteration count for the tested methods  are shown in Figure~\ref{fig:GaAs_iter}. One observes that the multilevel methods clearly outperform the single-level H1RGD and H1RCG. Comparing the convergence behavior of the multilevel schemes, we find that a~larger number of levels results in faster convergence. In particular, the coarse model condition~\eqref{eq:est_eta} with $\eta=0.4$ and $\mu=10^{-12}$, as used in our experiments, leads to an~adaptive multilevel cycle strategy. On the finest level, this results in an~alternating sequence of coarse corrections and gradient steps, as indicated in Figure~\ref{fig:GaAs_iter} by filled and hollow markers, respectively. For~7 levels, these steps alternate strictly, while for 4~levels, one to two gradient steps follow, for 3~levels, two gradient steps follow each coarse correction, and for~2~levels, three such steps are performed. The number of smoothing steps depends on the extent to which the error needs to be smoothed such that it is dominated by low-frequency modes that can be resolved on the coarser level, as illustrated in Figure~\ref{fig:ks-error-grid}. Since this level is finer when more levels are used, fewer smoothing steps are necessary. 
	
	Figure~\ref{fig:multilevel_structure} illustrates the detailed multilevel structure of the adaptive cycle for the H1 4-level method, where the pattern emerges naturally from the coarse condition~\eqref{eq:est_eta} rather than being imposed a~priori. By contrast, our experiments showed that the enforcement of fixed patterns that differ from those induced by the coarse condition~\eqref{eq:est_eta} leads to slower convergence.
	
	\begin{figure}[t]
		\centering
		\scalebox{.75}{\includegraphics{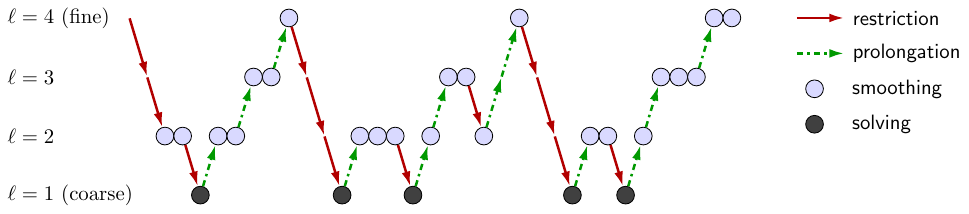}
		}
		\caption{GaAs model: multilevel structure of the adaptive cycle for the H1 4-level method driven by the coarse condition~\eqref{eq:est_eta}. Here, {\sf restriction} corresponds to the construction of a~Riemannian coarse model, while {\sf prolongation} transfers the coarse-level search direction to the fine level followed by a~line search. Furthermore, {\sf smoothing} consists of a single gradient step and {\sf solving} corresponds to applying H1RCG to the~coarsest model until the prescribed residual tolerance is achieved. }
		\label{fig:multilevel_structure}
	\end{figure}
	
	\begin{figure}[t]
		\includegraphics{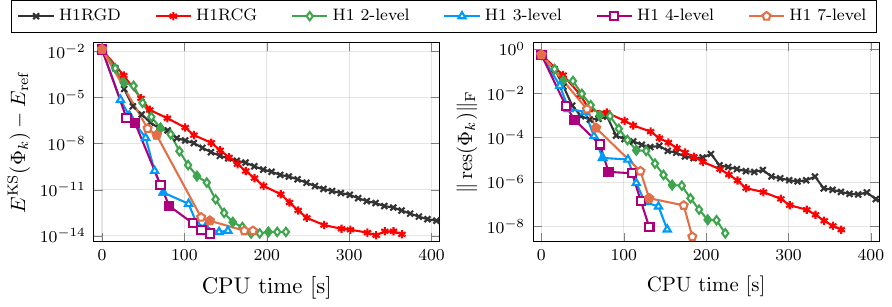}
		\caption{GaAs model: convergence histories of the energy error (left) and residual norm (right) versus CPU time for the different optimization schemes. Filled markers indicate coarse correction steps.
			The multilevel methods achieve substantial time savings over the single-level H1RGD and H1RCG, with the 4-level method performing best among the multilevel schemes.} 
		\label{fig:GaAs_times}
	\end{figure}
	
	It should, however, be noted that while methods with an~increasing number of levels need fewer iterations to converge, coarse correction steps incur additional computational cost. This is illustrated in Figure~\ref{fig:GaAs_times}, which presents the convergence behavior with respect to CPU time. In addition, Table~\ref{tab:times_ks} reports the total computational time together with the percentage of time gained or lost relative to the single-level H1RCG and H1RGD. It can be seen that, despite the additional overhead introduced by the coarse correction steps, the multilevel optimization algorithms still achieve significant speed-ups compared to the single-level methods. Among the tested variants, the 4-level scheme, where the number of degrees of freedom between successive levels differs by approximately a~factor of~4, yields the best overall performance, reducing the runtime of H1RGD by~77\% and of H1RCG by~65\%. 
	
	Finally, Figure~\ref{fig:ks-error-grid} shows the evolution of the error in the electron density along a~slice through the periodic GaAs lattice over the first seven iterates of the single-level H1RCG and the \mbox{H1~4-le}vel variant. It can be observed that coarse corrections
	at iterations 3 and 5 generate errors that are dominated by high-frequency oscillations, while subsequent gradient steps smooth out these errors until the main error contribution is again concentrated in the low-frequency modes, triggering another coarse correction.
	
	\captionsetup{width=0.95\textwidth, skip=3pt}
	\begin{table}[t]
		\centering
		\scalebox{.98}{
			\begin{tabular}{l | c c c c c c}
				& H1RGD   & H1RCG   & H1 2-level & H1 3-level & H1 4-level & H1 7-level \\ \hline
				CPU time  & 594     &  395    & 244        & 167        & \textbf{138}          & 188        \\
				H1RCG     & $+50$\% &         & $-38$\%    & $-58$\%    & $-65$\% & $-52$\%    \\
				H1RGD     &         & $-34$\% & $-59$\%    & $-72$\%    & $-77$\% & $-68$\%  
			\end{tabular}
		}
		\caption{GaAs model: CPU time [s] for the different optimization algorithms and the percentage of time gained or lost relative to the single-level H1RCG and H1RGD.}
		\label{tab:times_ks}
	\end{table}
	
	\begin{figure}
		\hspace{-4mm}\includegraphics{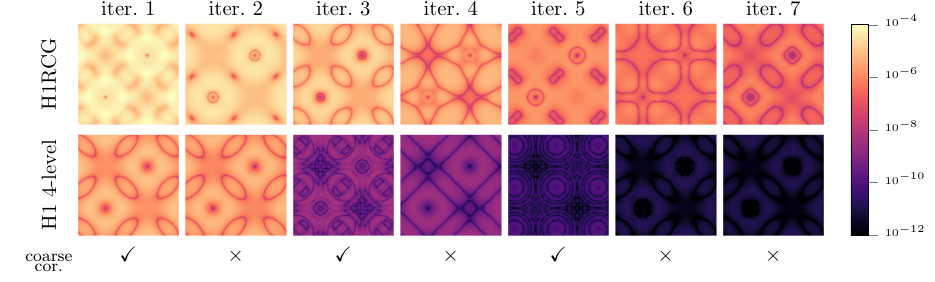}
		\caption{Errors in the electron density along a~slice through the periodic GaAs lattice evaluated at the first seven iterations of the single-level H1RCG and 4-level algorithms. For the 4-level variant, the bottom row indicates whether the respective iteration was a~coarse correction step. }
		\label{fig:ks-error-grid}
	\end{figure}
	
	
	\subsection{Gross--Pitaevskii problem}\label{sec:GrossPitaevskii}
	Unlike the Kohn--Sham model with multiple orbitals, the Gross--Pitaevskii ground-state problem Bose-Einstein condensates involves only a~single (normalized) wavefunction. However, its finite element discretization introduces additional numerical challenges related to mesh construction, adaptivity, and problem geometry, together with the need for effective preconditioners that account for both stiffness and mass matrices. 
	
	Let $\Omega\subset\R^d$ with $d=1,2,3$ be a~bounded convex Lipschitz domain. For a~quantum state $\varphi\in H_0^1(\Omega, \R)$ of a Bose--Einstein condensate, we consider the Gross--Pitaevskii energy minimization problem
	\begin{equation}\label{eq:minE_GP}
		\begin{array}{l}
			\displaystyle{\min \,\calE^{\GP}(\varphi)
				= \int_{\Omega} \frac 12\|\nabla \varphi(\xi)\|^2
				+ \frac 12 \vartheta(\xi)\, |\varphi(\xi)|^2  
				+ \frac\kappa4\, |\varphi(\xi)|^4 \dxi} \\[4mm]
			\text{subject to } \|\varphi\|_{L^2(\Omega,\R)}^2=1,
		\end{array}
	\end{equation}
	where $\vartheta\!\in\! L^\infty(\Omega,\R_+)$ is the external trapping potential confining the system, and~\mbox{$\kappa\in\R$} characterizes the strength of particle interactions. The ground state, defined as global mi\-ni\-mizers of the energy functional $\,\calE^{\GP}$ under the mass constraint, represents the most stable configuration of the condensate. 
	
	Using Euler--Lagrange calculus yields the first-order necessary optimality 
	\begin{equation}\label{eq:gp_nevp}
		\calA_\varphi^{\GP} \varphi 
		= \lambda\,\varphi, \qquad \|\varphi\|_{L^2(\Omega,\R)}^2=1,
	\end{equation}
	with the Gross--Pitaevskii Hamiltonian $\calA_\varphi^{\GP} = -\Delta + \vartheta + \kappa|\varphi|^2$, and the chemical potential \mbox{$\lambda \in \R$} representing the eigenvalue of $\calA^{\GP}_\varphi$. 
	For solving the Gross--Pitaevskii energy minimization problem~\eqref{eq:minE_GP}, different methods have been developed including
	various variants of Sobolev gradient flows~\cite{BaoD04,ChenLLZ24,HenP20},
	Newton-type techniques~\cite{DuL22,WuWB17}, 
	and Riemannian optimization methods~\cite{AltPS22,DanP17,HenY25,HerSY25} in single and multicomponent settings. Efficient implementations often exploit multigrid \cite{BorH08,XuXXF21} or preconditioning techniques \cite{AntLT17,FenT25} to accelerate convergence, especially in rotational or strongly interacting regimes.
	
	\subsubsection{Finite element discretization}
	For spatial discretization of the constrained minimization problem~\eqref{eq:minE_GP},
	we use the finite element discretization with $n$ degrees of freedom. The discretized version of \eqref{eq:minE_GP} takes then the form
	\begin{equation}\label{eq:dis_minE_GP}
		\begin{array}{l}
			\displaystyle{\min E^{\rm GP}(\phi)
				=  \phi^\top \, \Big(\frac{1}{2}\, L 
				+ \frac{1}{2}\, M_{\vartheta} 
				+ \frac{\kappa}{4}\, M_{\phi\phi}\Big)\, \phi} \\[4mm]
			\text{subject to } \phi^\top M\phi=1,		
		\end{array}
	\end{equation}
	where $\phi\in\R^n$ is the discrete quantum state, $L\in\R^{n\times n}$ is the discrete negative Laplacian, $M\in\R^{n\times n}$ is the $L^2$-mass matrix, and \mbox{$M_\vartheta, M_{\phi\phi}\in\R^{n\times n}$} are the weighted mass matrices, where $\phi\phi$ should be understood as the componentwise product. The discrete counterpart of the NEVP~\eqref{eq:gp_nevp}
	is $A_\phi\, \phi=\lambda M\phi$ with the stiffness matrix $A_\phi = L+M_\vartheta + \kappa M_{\phi\phi}$.
	
	The feasible set in the finite-dimensional formulation~\eqref{eq:dis_minE_GP} is given by an ellipsoid 
	\[
	\calS_{M}=\big\{ \phi\in\mathbb{R}^{n}\enskip:\enskip \phi^{\top}M\phi=1\big\}. 
	\]
	The necessary geometric concepts on this manifold are collected in Table~\ref{tab:ellipsoid}. Their derivation can be found in \cite[Example~8.1.4]{AbsMS08} and \cite{Hua13}.
	
	\begin{table}
		\begin{tabular}{lll}
			\toprule 
			manifold & $\calS_{M}=\big\{ \phi\in\mathbb{R}^{n}\enskip:\enskip \phi^{\top}M\phi=1\big\}$
			\tabularnewline
			\midrule 
			tangent space & $\Tan_\phi\,\calS_M = \big\{ v\in\R^n \enskip :\enskip \phi^\top Mv=0 \big\}$
			\tabularnewline
			\midrule 
			$M$-metric & $\langle u,v\rangle_{M} = u^\top M\,v$, \; $u,v\in\Tan_\phi\,\calS_M$ \tabularnewline
			\midrule 
			$M$-orthogonal projection & $\Pi_{\phi,M}=I-\phi\phi^\top M$ 
			\tabularnewline
			\midrule 
			$M$-Riemannian gradient & $\grad_{M} E^{\GP}(\phi) = M^{-1}\big(A_\phi\,\phi - (\phi^\top A_\phi\,\phi)M\phi\big)$
			\tabularnewline
			\midrule 
			$A$-metric & $\langle u,v\rangle_{\phi,A} = u^\top A_{\phi}\,v$, \; $u,v\in\Tan_\phi\,\calS_M$ 
			\tabularnewline
			\midrule 
			$A$-orthogonal projection & $\Pi_{\phi,A}=I-\frac{1}{\phi^\top MA_\phi^{-1} M\phi} A_\phi^{-1}M\phi\phi^\top M$ 
			\tabularnewline
			\midrule 
			$A$-Riemannian gradient & $\grad_{A} E^{\GP}(\phi) = \phi-\frac{1}{\phi^\top MA_\phi^{-1} M\phi} A_\phi^{-1}M\phi$
			\tabularnewline
			\midrule 
			projective retraction 
			& $\Ret_\phi(v)=\frac{\phi+v\enskip}{\|\phi+v\|_{M}}$\tabularnewline
			\midrule 
			projective lifting map
			& $\invRet_\phi(z)=\frac{1}{\phi^{\top}Mz}z-\phi$\tabularnewline
			\midrule 
			differentiated lifting map
			& $\Drm\invRet_\phi(z)[u]=\frac{1}{\phi^\top Mz}\Big(I-\frac{z\,\phi^\top M}{\phi^\top Mz}\Big)u$\tabularnewline
			\bottomrule
		\end{tabular}
		\caption{Geometric concepts for the ellipsoid $\calS_M$.}
		\label{tab:ellipsoid}
	\end{table}
	
	Consider the projection operator mapping the ambient space $\R^n$ onto the ellipsoid $\calS_M$, which is defined as 
	\[
	\pi_{\calS_M}(y) = \argmin_{\tilde\phi\,\in\,\calS_M} \|\tilde\phi-y\|_{M}
	=\frac{y\enskip\,}{\|y\|_{M}}, \qquad y\in\R^n\setminus \{0\}.
	\]
	It induces the \emph{projective retraction}
	\[
	\Ret_\phi(v)
	=\argmin_{\tilde\phi\in\,\calS_M}\|\tilde\phi-(\phi+v)\|_M
	= \pi_{\calS_M}(\phi+v)
	= \frac{\phi+v\enskip\;}{\|\phi+v\|_{M}}, \quad v\in\Tan_\phi\calS_M.
	\]
	The derivative of $\pi_{\calS_M}$ at $y$ is given by 
	\[
	\Drm\pi_{\calS_M}(y)=\frac{1}{\|y\|_M} \left(I-\frac{yy^\top M}{\|y\|_M^2}\right).
	\]
	For $\phi\in\calS_M$, $\Pi_{\phi,M}:=\Drm \pi_{\calS_M}(\phi)=I-\phi\phi^\top M$ is the orthogonal projection onto $\Tan_\phi \,\calS_M$ with respect to the $M$-metric defined in Table~\ref{tab:ellipsoid}. 
	
	\subsubsection{Vector transfer operators} 
	\label{sssec:GP_VectorTransports}
	
	Our goal is now to introduce the point restriction and prolongation maps and the corresponding vector transfer operators using different approaches presented in Section~\ref{sec:vector_transport_choice} based on the $M$-metric.
	
	Let $\calS_{M_H}$ and $\calS_{M_h}$ be the ellipsoids corresponding to the coarse and fine discretizations, respectively, with the mass matrices $M_H\in\R^{n_H\times n_H}$ and $M_h\in\R^{n_h\times n_h}$. For simplicity, we write $\|\cdot\|_M$ for the norm induced by the mass matrix on the corresponding discretization level, omitting the level index whenever it is clear from the context. The same convention applies to the orthogonal projections and Riemannian gradients with respect to the corresponding $M$-metric. Using the interpolation and restriction operators \mbox{$I_H^h:\mathbb{R}^{n_H}\rightarrow\mathbb{R}^{n_h}$} and \mbox{$I_h^H:\mathbb{R}^{n_h}\rightarrow\mathbb{R}^{n_H}$}, respectively, we define the point restriction and prolongation as the compositions 
	\begin{align}
		\restr(\phi)
		& =\big(\pi_{\calS_{M_H}}\!\circ I_h^H\big)(\phi)
		=\frac{I_h^H\phi\enskip\;}{\|I_h^H\phi\|_{M}}, 
		& \hspace*{-15mm}\phi\in\calS_{M_h}, \label{eq:restrGP}\\
		\prol(\psi)
		& =\big(\pi_{\calS_{M_h}}\!\circ I_H^h\big)(\psi)
		= \frac{I_H^h\psi\enskip\;}{\|I_H^h\psi\|_{M}}, 
		& \hspace*{-14mm}\psi\in\calS_{M_H}, \notag
	\end{align}
	provided $I_h^H\phi\neq 0$ and $I_H^h\psi\neq 0$. Their derivatives are given by 
	\begin{align*}
		\Drm \restr(\phi) & =\frac{1}{\|I_h^H \phi\|_{M}}\bigg(I-\frac{(I_h^H \phi)(I_h^H \phi)^{\top}M_{H}}{\|I_h^H \phi\|_{M}^{2}}\bigg)I_h^H,\notag \\
		\Drm \prol(\psi) & =\frac{1}{\|I_H^h \psi\|_{M}}\bigg(I-\frac{(I_H^h \psi)(I_H^h \psi)^{\top}M_{h}}{\|I_H^h \psi\|_{M}^{2}}\bigg) I_H^h.
	\end{align*}
	Using these representations, we define the following vector transfer operators:
	\begin{itemize}
		\item Version III: 
		$P_\psi^{\phi} = \Pi_{\phi,M}\Drm \prol(\psi)$ and 
		\begin{align*}
			\qquad\qquad\qquad\qquad R_{\phi}^{\psi} =  (P_\psi^{\phi})^{\ast}_M 
			= \frac{1}{\|I_H^h \psi\|_M} M_{H}^{-1}(I_H^h)^\top M_{h} \bigg(I-\frac{(I_H^h \psi)(I_H^h \psi)^{\top}M_{h}}{\|I_H^h \psi\|_M^2}\bigg)\bigg|_{\Tan_{\phi}\calS_{M_h}};
		\end{align*}
		
		\item Version IV: 
		$P_{\psi}^{\phi}=\Pi_{\phi, M} I_{H}^{h}\big|_{\Tan_{\psi}\calS_{M_H}}$ and $R_{\phi}^{\psi} =\Pi_{\psi,M} I_{h}^{H}\big|_{\Tan_{\phi}\calS_{M_H}}$;
		
		\item Version V: 
		$P_{\psi}^{\phi} =\Pi_{\phi,M}I_{H}^{h}\big|_{\Tan_{\psi}\calS_{M_H}}$ and 
		\begin{align*}
			R_{\phi}^{\psi} &=(P_{\psi}^{\phi})^{\ast}_{M} = (I-\psi\psi^{\top}M_{H})M_H^{-1}(I_{H}^{h})^{\top}M_{h}\big|_{\Tan_{\phi}\calS_{M_h}}; \quad
		\end{align*}
	\end{itemize}
	We do not consider the other vector transfer operators presented in Section~\ref{sec:vector_transport_choice}, as they are computationally more expensive. For example, the restriction based geometric vector prolongation (Version~I) involves an~additional inversion on the fine level. 
	
	\subsubsection{Mass-weighed Riemannian coarse model}
	\label{subsec:geom-coarse}
	Next, we provide the expressions for the Riemannian coarse model with respect to the $M$-metric and its Riemannian gradients. 
	Let $\phi_{k}\in\mathcal{S}_{M_h}$ and $\psi_k=r(\phi_{k})\in\mathcal{S}_{M_H}$ with the point restriction map $\restr$ defined in~\eqref{eq:restrGP}. The coarse model with respect to the $M$-metric is given by
	\begin{align*}
		\min_{z\in\calC_{\psi_k}}q_{k}^{\GP}(z) 
		& =E_{H}^{\GP}(z)-\big\langle w_{k},\invRet_{\psi_k}^H(z)\big\rangle_{M_H},  \\
		w_{k} & =\grad_M E_{H}^{\GP}(\psi_k) - R_{\phi_{k}}^{\psi_k}\big(\grad_M E_{h}^{\GP}(\phi_{k})\big). 
		\notag
	\end{align*}
	To solve this minimization problem on the coarsest level and to carry out the fine-grid gradient steps, we use the energy-adaptive Riemannian gradient decent (EARGD) method \cite{HenP20} based on the energy-adaptive metric (shortly, $A$-metric) as defined in Table~\ref{tab:ellipsoid}. The $A$-Riemannian gradient of the coarse model objective $q_k^{\GP}$ is given by  
	\[
	\grad_{A} q_k^{\GP}(z) 
	= \Pi_{z,A}\left(z - \frac{1}{\psi_k^\top M_{H}z} A_z^{-1}M_H\Big(I- \frac{\psi_k\,z^\top M_H}{\psi_k^\top M_Hz}\Big) w_k\right).
	\]
	Note that its computation involves two inversion of $A_{z}$ on the coarse level.
	
	\begin{table}[t]
		\centering\vspace*{-3mm}
		\scalebox{.98}{
			\begin{tabular}{c|c|c|c|c}
				$\ell_{\rm ref}$ &  
				8 &  
				9 &  
				10 &  
				11   
				\\\hline 
				\#dofs 
				& 16,441
				& 66,049  
				& 263,169
				& 1,050,625 \\\multicolumn{5}{}{}\\\multicolumn{5}{}{}
			\end{tabular}
		}
		\hspace{5mm}
		\scalebox{.98}{\begin{tabular}{c|c}
				scheme & $\ell_{\rm ref}$ \\ \hline
				2-level & 10, 11 \\
				3-level & 9, 10, 11 \\
				4-level & 8, 9, 10, 11 
		\end{tabular}}
		\caption{Gross--Pitaevskii model: number of degrees of freedom for different refinement levels (left) and multilevel setup for different numbers of levels (right).}
		\label{tab:dof_GP}
	\end{table}
	
	\subsubsection{Numerical experiments}
	\label{sec:experiments}
	
	The Gross--Pitaevskii problem is discretized using the C++ finite element library \texttt{deal.ii} \cite{ABBBFGHHKMMSTUWW25}. 
	The experimental setup is as follows. We consider the Gross--Pitaevskii model with homogeneous Dirichlet boundary conditions on the spatial domain $\Omega=[-11,11]^2$. The external trapping potential is given by the harmonically confined optical lattice  
	\[
	\vartheta(\xi)=\frac{1}{2}\big(\xi_1^2+\xi_2^2\big) 
	+ 100 \left( \sin^2\left(\tfrac{\pi\xi_1}{2}\right)+\sin^2\left(\tfrac{\pi\xi_2}{2}\right) \right),
	\]
	and the interaction strength is set to $\kappa=1000$, which corresponds to a~strongly interacting and computationally challenging regime. For the spatial discretization, we employ bilinear finite elements on a~hierarchy of quadrilateral meshes of mesh width $h_{\ell_{\rm ref}}=10\cdot 2^{-(\ell_{\rm ref}-1)}$, where $\ell_{\rm ref}\in\{8,9,10,11\}$ denotes the refinement level.  
	The corresponding numbers of degrees of freedom on the different refinement levels together with the multilevel setup for different numbers of levels are reported in Table~\ref{tab:dof_GP}.
	The transfer operators between the refinement levels in the finite element spaces are realized using nodal injection for $I_h^H$ and bilinear nodal interpolation for $I_H^h$, as provided by \texttt{deal.ii}. 
	All linear systems are solved using the conjugate gradient (CG) method with a~maximum of $500$ iterations. For systems involving the mass matrix $M$, a~fixed residual tolerance of $10^{-6}$ is used. For systems involving the stiffness matrix $A_{\phi_k}$, we employ an~adaptive tolerance of $10^{-2} \|\res(\phi_k)\|_{M}$, where 
	\[
	\res(\phi_k)=A_{\phi_k}\phi_k - (\phi_k^\top A_{\phi_k}\phi_k)M\phi_k
	\]
	denotes the residual associated with the fine-level iterate $\phi_k$. Furthermore, for the coarse model $q_k^\GP$, we define the residual corresponding to an~iterate~$z_l$~by
	\begin{align*}
		\mathrm{qres}(z_l)=M_H\grad_{M}q_{k}^{\GP}(z_l) 
		& =A_{z_l}z_l-(z_l^\top \!A_{z_l}z_l)M_Hz_l
		- \frac{1}{\psi_k^\top M_{H}z_l}\Big(I - \frac{\psi_k\, z_l^\top\! M_H}{\psi_k^\top M_{H}z_l}\Big)w_{k}.
	\end{align*}
	When solving linear systems involving $A_{z_l}$ on the coarse levels, the CG iteration is run until the norm of the linear-system residual is reduced below the adaptive tolerance of $10^{-2}\|\mathrm{qres}(z_l)\|_M$. 
	The EARGD iterations are terminated after $4$ steps on the coarse levels and once the residual norm satisfies $\|\res(\phi_k)\|_M\leq 10^{-8}$ on the finest level. The reference ground state $\phi_{\rm ref}$ and the reference energy $E_{\rm ref}$ are computed by running EARGD until the residual norm reaches $10^{-15}$.
	The step sizes are determined using the Armijo backtracking line search. In the coarse model condition~\eqref{eq:est_eta}, the parameters are set to~$\eta=0.5$ and~$\mu=10^{-4}$. Note that once the threshold $\mu=10^{-4}$ is reached, no further coarse correction steps are performed for the remainder of Algorithm~\ref{alg:restriction_based_multilevel}.
	
	\captionsetup{width=.95\textwidth, skip=3pt}
	\begin{figure}[t]
		\centering
		\includegraphics{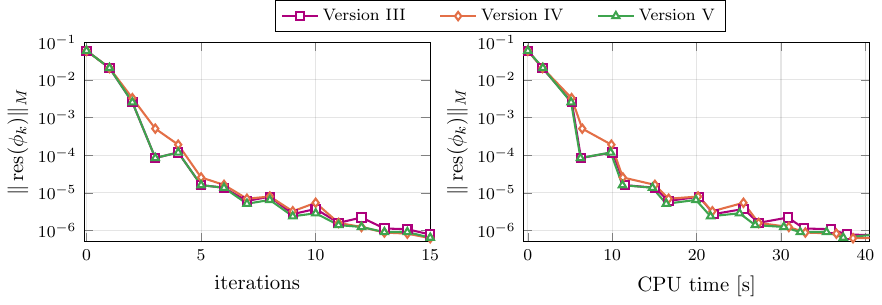}
		\caption{Gross--Pitaevskii model: convergence histories of the residual norm versus iteration count (left) and CPU time (right) for the 4-level EARGD scheme with different vector transfer operators.}
		\label{fig:gp_4level}
	\end{figure}
	
	\captionsetup{width=.95\textwidth, skip=4pt}
	\begin{figure}[t]
		\centering
		\includegraphics{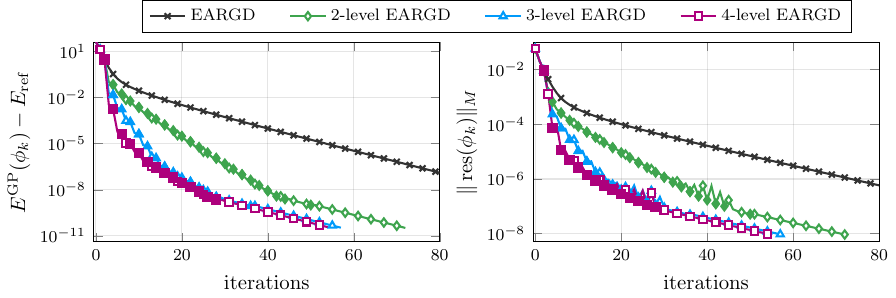}
		\caption{Gross--Pitaevskii model: convergence histories of the energy error (left) and residual norm (right) versus iteration count for different optimization schemes. Filled markers indicate coarse correction steps. The multilevel methods clearly outperform the single-level EARGD, with the \mbox{4-le}vel variant achieving the fastest convergence.} 
		\label{fig:gp_iter2}
	\end{figure}
	
	\captionsetup{width=.95\textwidth, skip=4pt}
	\begin{figure}[t!]
		\centering
		\includegraphics{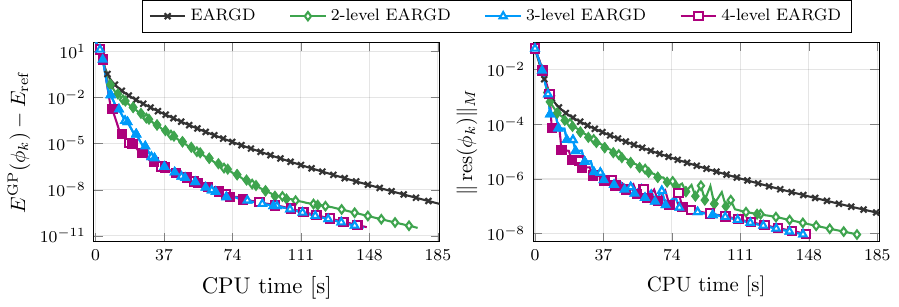}
		\caption{Gross--Pitaevskii model: convergence histories of the energy error (left) and residual norm (right) versus CPU time for different optimization schemes. Filled markers indicate coarse correction steps. The multilevel methods yield significant reductions in computational time compared to the single-level EARGD, with the 3-level variant being the most efficient. 
		} 
		\label{fig:gp_time2}
	\end{figure}
	
	In Figure~\ref{fig:gp_4level}, we compare the residual norms $\|\mathrm{res}(\phi_k)\|_M$ versus iteration count and CPU time for the 4-level EARGD  scheme with different vector transfer operators defined in Section~\ref{sssec:GP_VectorTransports}. We present the results for the first 15 iterations only, during which the algorithm alternates between coarse correction and gradient steps. For further details on the iteration structure, we refer to Figures~\ref{fig:gp_iter2} and~\ref{fig:gp_time2}. It can be observed that all tested operators exhibit very similar performance, with the prolongation-based (Version~III) and projection-based consistent (Version~V) operators performing slightly better than the projection-based inconsistent (Version~IV) operators. This highlights the importance of satisfying the geometric Galerkin condition. 
	
	\captionsetup{width=0.95\textwidth, skip=3pt}
	\begin{table}[t]
		\centering
		\scalebox{.98}{
			\begin{tabular}{c|ccccc}
				& EARGD & 2-level EARGD  & 3-level EARGD  & 4-level EARGD  
				\tabularnewline
				\hline 
				CPU time & 237 & 174 & \textbf{145} & 147 
				\tabularnewline
				EARGD &  & $-27\%$ & $-39\%$ & $-39\%$ 
				\tabularnewline
		\end{tabular}}
		\caption{Gross--Pitaevskii model: CPU time [s] for the different optimization algorithms and the percentage of time gained relative to the single-level EARGD.}
		\label{tab:times_gp}
	\end{table}
	
	\begin{figure}[t]
		\centering
		\scalebox{.75}{\includegraphics{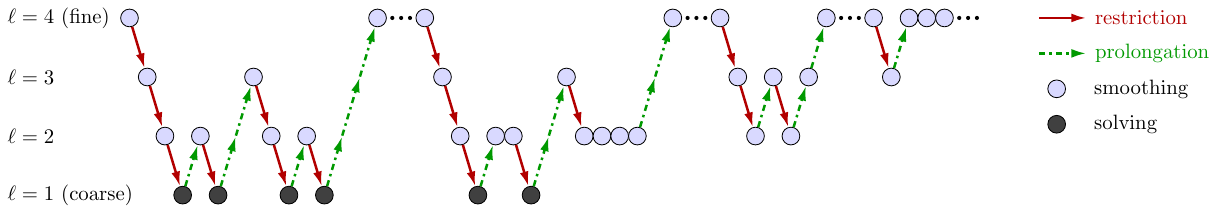}
		}
		\caption{Gross--Pitaevskii model: multilevel structure of the adaptive cycle for the 4-level EARGD method driven by the coarse condition~\eqref{eq:est_eta}. Here, {\sf restriction} corresponds to the construction of a~Riemannian coarse model, while {\sf prolongation} transfers the coarse-level search direction to the fine level followed by a~line search. Furthermore, {\sf smoothing} consists of a single gradient step and {\sf solving} corresponds to applying EARGD to the~coarsest model until the prescribed residual tolerance is achieved.}
		\label{fig:multilevel_structure_gp}
	\end{figure}
	
	In the subsequent experiments, for simplicity of presentation, we use the prolongation-based (Version~III) vector transfer operators throughout, as they provide representative performance. Figures~\ref{fig:gp_iter2} and~\ref{fig:gp_time2} present the energy errors and residual norms for different optimization schemes with various multilevel hierarchies. The numerical results clearly demonstrate the advantages of the multilevel strategies over the single-level approach. Their improved convergence behavior is reflected not only in the iteration counts but also in the overall computational efficiency. Thereby, the 4-level EARGD  scheme exhibits the best convergence performance while being only slightly more expensive than the 3-level variant; see Table~\ref{tab:times_gp} reporting the total computational time together with the percentage reduction for the multilevel schemes with different numbers of refinement levels relative to the single-level EARGD.  
	Figure~\ref{fig:multilevel_structure_gp} shows the structure of the  4-level EARGD scheme, illustrating the resulting adaptive cycles driven by the coarse condition~\eqref{eq:est_eta}. The multilevel cycles get progressively shallower, until at some point only regular gradient steps are taken.

	In Figure~\ref{fig:gp_groundstate1}, we present the optical lattice potential, the ground state computed by the 4-level EARGD, and the spatial evolution of the iterate errors $\|\phi_k-\phi_{\rm ref}\|$ for both schemes. The error is reduced first in the bulk, while the largest errors persist near the lattice sites and the domain boundary. The 4-level scheme reaches a comparable error after substantially fewer iterations than the single-level method.

		\begin{figure}[t]
			\hspace*{-13.5mm}\includegraphics{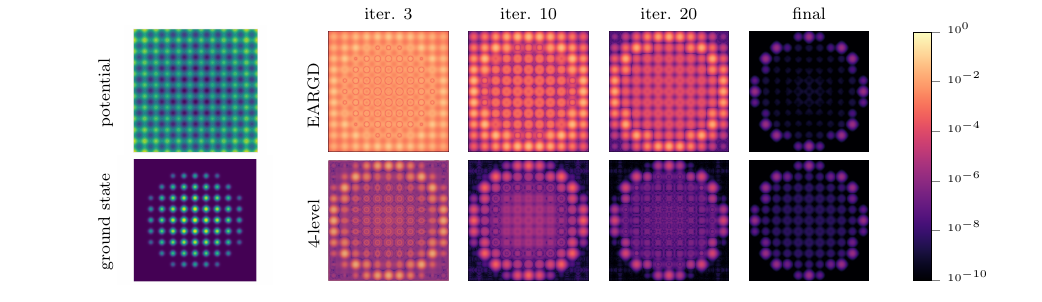}
			\caption{Gross--Pitaevskii model: (left) the harmonically confined
				optical lattice potential~$\vartheta$ and the ground state computed by the 4-level EARGD; (right) evolution of the iterate errors $\|\phi_k-\phi_{\rm ref}\|$
				for the single-level EARGD and 4-level EARGD  at the iterations $k=3, 10, 20$, and the final iterations ($k=130$ for EARGD and $k=56$ for the 4-level variant).}
			\label{fig:gp_groundstate1}
		\end{figure}
		
		\subsection{Binary continuous cuts problem}
		\label{sec:Binary-CC}
		
		Let $\Omega \subset \mathbb{R}^2$ be a bounded Lipschitz image domain and $g : \Omega \to \R$ a~grayscale image. The binary segmentation problem seeks a partition of $\Omega$ into foreground and background regions, encoded by a~binary label function $\phi\in \mathrm{BV}(\Omega;\{0,1\})$, i.e., $\phi=\chi_S$ a.e.
		for a set $S\subset\Omega$ of finite perimeter~\cite{AmbrosioFuscoPallara2000}. This problem is non-convex; a convex relaxation is obtained by replacing the binary 
		constraint with $\mathrm{BV}(\Omega;{[0,1]})$, yielding
		\begin{equation}\label{eq:cont_cuts}
			\begin{array}{l}
				\displaystyle{\min \;\calE^{\CC}(\phi) = \int_{\Omega} \rho(\xi)\,\phi(\xi)\,\drm\xi
					\;+\;
					\alpha \, \mathrm{TV}(\phi),}
				\\[4mm]
				\text{subject to } \; 0 \leq \phi(\xi) \leq 1, \quad \text{a.e. in } \Omega,
			\end{array}
		\end{equation}
		where $\alpha > 0$ is a regularization parameter, $\mathrm{TV}(\phi)$ denotes the total variation of $\phi$ on $\Omega$~\cite{AmbrosioFuscoPallara2000}, and 
		$\rho(\xi) = (c_f - g(\xi))^2 - (c_b - g(\xi))^2$ is the data term for characteristic foreground/background intensities $c_f,c_b$.
		Since $\rho$ enters $\calE^{\CC}$ linearly in $\phi$, the coarea formula implies that thresholding any global minimizer~$\phi^\ast$ of~\eqref{eq:cont_cuts}
		at almost every level 
		$\tau\in(0,1)$ yields a global binary minimizer 
		$\chi_{\{\phi^\ast>\tau\}}$ of the original binary problem~\cite{ChanEsedogluNikolova2006}. 
		
		\subsubsection{Finite difference discretization}
		\label{ssec:CC_discretization}
		We discretize the image domain on an~$m \times l$ pixel grid and identify both the image $g$ and segmentation variable $\phi$ with vectors in $\R^n$, where $n=ml$. For simplicity, we use the same notation as in the continuous setting. Adopting an isotropic discretization of the total variation, the minimization problem 
		\eqref{eq:cont_cuts} is replaced by its discrete and smoothed counterpart
		\begin{equation} \label{eq:cont_cuts_disc}
			\begin{array}{l}
				\displaystyle{\min \; 
					E^{\CC}_\varepsilon(\phi) = \rho^\top \phi + \alpha \sum_{i=1}^n \sqrt{(D_1 \phi)_i^2+(D_2 \phi)_i^2+\varepsilon^2}   }   \\[4mm]
				\text{subject to } \; 0 \leq \phi\leq 1,
			\end{array}
		\end{equation}
		where $\rho \in \R^n$ with $\rho_i = (c_f - g)_i^2 - (c_b - g)_i^2$,  $D  =\begin{bmatrix} D_{1} \\D_{2}\end{bmatrix} \in\R^{2n\times n}$ denotes the forward finite difference discretization of the gradient stacking horizontal and vertical differences, and $\varepsilon > 0$ is a small smoothing parameter.
		
		The box constraint in~\eqref{eq:cont_cuts_disc} is treated geometrically by passing to its interior $\calB=(0,1)^n.$ Each component $\phi_i$ is interpreted as the parameter of a Bernoulli distribution. Equipped with the Fisher--Rao metric, $\calB$ becomes the
		product Bernoulli manifold; see Table~\ref{tab:Bernoulli} and
		\mbox{\cite{Amari:2000,MuePZ23}}. This geometry keeps the iterates
		inside the open box without projection. Together with the objective, whose linear data term already favors binary labelings, the Fisher--Rao metric - singular as $\phi_i\to0,1$ - drives iterates toward near-binary labelings and reduces the need for post-processing thresholding.
		
		\begin{table}
			\begin{tabular}{lll}
				\toprule
				Bernoulli manifold &
				$\calB=(0,1)^n$
				\tabularnewline
				\midrule
				tangent space &
				$\displaystyle \Tan_\phi\, \calB=\R^n$ 
				\tabularnewline
				\midrule
				Fisher-Rao metric & $\langle u,v\rangle_\phi= \displaystyle{\onebb^\top \frac{u v}{\phi(\onebb-\phi)}}$, \; $u,v\in\Tan_\phi\,\calB$ 
				\tabularnewline
				\midrule
				Riemannian gradient&
				$\displaystyle
				\grad E^{\CC}_\varepsilon(\phi)
				=
				\diag\big(\phi(\onebb-\phi)\big) 
				\Big( \rho + \alpha D^\top ( I_2 \otimes \diag(\omega) ) D \phi \Big) 
				$
				\tabularnewline
				& with $\;
				\omega_i = \frac{1}{\sqrt{(D_1\phi)_i^2 + (D_2\phi)_i^2 + \varepsilon^2}}, \quad i = 1, \dots, n$
				\tabularnewline
				\midrule
				$e$-retraction &
				$\displaystyle 
				\Ret_\phi(v)
				=\frac{\phi\,\exp\!\Big(\frac{v}{\phi(\onebb-\phi)}\Big)}
				{\onebb-\phi+\phi\,\exp\!\Big(\frac{v}{\phi(\onebb-\phi)}\Big)}$
				\tabularnewline
				\midrule
				lifting map &
				$\displaystyle 
				\invRet_\phi(z)
				=\phi(\onebb-\phi)
				\log\!\Big(\frac{(\onebb-\phi)z}{\phi(\onebb-z)}\Big)$
				\tabularnewline
				\midrule
				differentiated lifting map &
				$\displaystyle 
				\Drm\invRet_\phi(z)[u]
				=\frac{\phi
					(\onebb-\phi)}{z(\onebb-z)}\,u$
				\tabularnewline
				\bottomrule
			\end{tabular}
			\caption{Geometric concepts for the product Bernoulli manifold $\calB$. Here, $I_2$ denotes the $2\times2$ identity matrix and $\otimes$ the Kronecker product.} 
			\label{tab:Bernoulli} 
		\end{table}
		
		\subsubsection{Vector transfer operators}
		\label{ssec:CC_vector_transport}
		Let $\calB_h=(0,1)^{n_h}$ and $\calB_H=(0,1)^{n_H}$ denote the fine and coarse
		Bernoulli manifolds, and let $I_h^H:\R^{n_h}\to\R^{n_H}$ and $I_H^h:\R^{n_H}\to\R^{n_h}$ be the corresponding restriction and prolongation operators between the coarse and fine grid discretizations. Using the logit and logistic functions 
		\begin{equation*}
			\logit(s)=\log\!\Big(\frac{s}{1-s}\Big), \qquad
			\sigma(t)=\frac{e^t}{1+e^t},
		\end{equation*}
		we define the point restriction and prolongation maps
		\[
		r(\phi)=\sigma\big(I^H_h\logit(\phi)\big), \qquad p(\psi)=\sigma\big(I_H^h\logit(\psi)\big),
		\]
		respectively. Here and in the following, all functions are understood componentwise. Thus, transfer between levels is performed in logit coordinates. Using the fine and coarse Fisher--Rao metric tensors
		\begin{align*}
			G_h(\phi)=\diag\!\Big(\frac{\onebb}{\phi(\onebb-\phi)}\Big),
			\qquad
			G_H(\psi)=\diag\!\Big(\frac{\onebb}{\psi(\onebb-\psi)}\Big),
		\end{align*}
		where $\onebb$ denotes the all-ones vector of appropriate dimension.
		Their differentials are given~by
		\begin{align} \label{eq:Dm-Dp-Bernoulli}
			\Drm r(\phi) = G_H(\psi)^{-1}\, I^H_h\, G_h(\phi), & \qquad \psi = r(\phi),\\
			\Drm p(\psi) = G_h(\phi)^{-1} I^h_H\, G_H(\psi), & \qquad \phi = p(\psi). \notag
		\end{align}
		Note that $G_h(\phi)=\Drm\logit(\phi)$, so the Fisher--Rao factors in \eqref{eq:Dm-Dp-Bernoulli} are exactly the Jacobians of the coordinate change to logit coordinates.
		
		Following the standard multilevel literature \cite{BriHM00}, we use injection $J^H_h$  and full-weighting $F^H_h$ as the grid restriction $I^H_h$, and bilinear interpolation $B^h_H$ as the grid prolongation $I^h_H$. These operators satisfy $F_h^H=\frac{1}{4}(B_H^h)^\top$, and $B^h_H$ is a right inverses of~$J^H_h$. For full-weighting and bilinear interpolation, the metric conjugation in~\eqref{eq:Dm-Dp-Bernoulli} is non-trivial, motivating the geometrically weighted operators
		\begin{align*}
			\mathrm{g}F^H_h := G_H^{-1}(\psi)\,F^H_h\,G_h(\phi), \qquad
			\mathrm{g}B^h_H := G_h^{-1}(\phi)\,B^h_H\,G_H(\psi).
		\end{align*}
		In contrast, injection $J^H_h$ is particularly simple. Since $J^H_h$ commutes with any componentwise map, including logit and logistic functions, we have $r(\phi) = \sigma (J^H_h \logit(\phi)) = J^H_h \phi$ and $G_H(\psi)^{-1}\,J^H_h\,G_h(\phi) = J^H_h$.
		
		These constructions give rise to the following vector transfer operators from Table~\ref{tab:vector_ops}:
		\begin{itemize}
			\item {Version~I}: 
			$P^{\phi}_{\psi} = \Drm r(\phi)^{\ast}
			= \bigl(I^H_h\bigr)^{\!\top}$ and 
			$R^{\psi}_{\phi} = \Drm r(\phi)
			= G_H(\psi)^{-1}\,I^H_h\,G_h(\phi)$;
			
			\item {Version~II}: 
			$P^{\phi}_{\psi} = \Drm r(\phi)^{-}
			= G_h(\phi)^{-1}\bigl(I^H_h\bigr)^{-}G_H(\psi)$ and 
			$R^{\psi}_{\phi} = \bigl(\Drm r(\phi)^{-}\bigr)^{\ast}
			= \bigl(I^H_h\bigr)^{-\top}$;
			
			\item {Version~III}: 
			$P^{\phi}_{\psi} = \Drm p(\psi)
			= G_h(\phi)^{-1}I^h_H\,G_H(\psi)$ and 
			$R^{\psi}_{\phi} = \Drm p(\psi)^{\ast}
			= \bigl(I^h_H\bigr)^{\top}$;
			
			\item {Version~IV}: 
			$P^{\phi}_{\psi} = I^h_H$ and 
			$R^{\psi}_{\phi} = I^H_h$;
			
			\item {Version~V}: 
			$P^{\phi}_{\psi} = I^h_H$ and 
			$R^{\psi}_{\phi} = G_H(\psi)^{-1} I^H_h G_h(\phi)$.
		\end{itemize}
		Unlike the other versions which are fully determined by the canonical grid transfer operators, Version~II retains additional algebraic freedom through the selection of the right inverse~$(I^H_h)^-$. Although any right inverse is admissible, the resulting vector transfer operators may vary in quality. For $I^H_h = J^H_h$, we choose the principled $B^h_H$. In the case $I^H_h = F^H_h$, we employ the Moore--Penrose inverse and its geometrically weighted form 
		\[
		(F^H_h)^+ := (F^H_h)^{\!\top}\bigl(F^H_h\,(F^H_h)^{\!\top}\bigr)^{-1}, \qquad
		\mathrm{g}(F^H_h)^+ := G_h^{-1}(\phi)\,(F^H_h)^+\,G_H(\psi).
		\]
		The resulting point restriction maps and vector transfer operators are collected in Table~\ref{tab:CC_vectort_ransports}.
		
		\begin{table}[h]
			\centering
			\label{tab:options}
			\renewcommand{\arraystretch}{1.4}
			\begin{tabular}{cc ccc ||c c c c}
				\hline
				{Option} & {Version} & $I^H_h$ & $(I^H_h)^-$ & $I^h_H$
				& & $r$ & $P_\psi^\phi$ & $R_\phi^\psi$  \\
				\hline
				\multirow{2}{*}{1}
				& II  & $J^H_h$ & $B^h_H$     & --- &      & \multirow{2}{*}{$J^H_h$}
				& \multirow{2}{*}{$\mathrm{g}B^h_H$}
				& \multirow{2}{*}{4$F^H_h$}\\
				& III & $J^H_h$ & ---         & $B^h_H$ &   & & & \\
				\hline
				\multirow{2}{*}{2}
				& I & $F^H_h$ & ---         & ---  &     & \multirow{2}{*}{$\mathrm{g}F^H_h$}
				& \multirow{2}{*}{$\frac{1}{4}B^h_H$} 
				& \multirow{2}{*}{$\mathrm{g}F^H_h$} \\
				& V & $F^H_h$ & ---          & $\frac{1}{4}B^h_H$ &    & & & \\
				
				\hline
				3   & II  & $F^H_h$ & $(F^H_h)^+$ & ---  &     & $\mathrm{g}F^H_h$ & $\mathrm{g}(F^H_h)^+$ & $\big((F^H_h)^+\big)^\top$ \\
				\hline
				4   & III   & $F^H_h$     & ---     & $B^h_H$ &   & $\mathrm{g}F^H_h$ & $\mathrm{g}B^h_H$ & $4\,F^H_h$  \\
				\hline
				5   & IV   & $F^H_h$     & ---     & $B^h_H$ &   & $\mathrm{g}F^H_h$ & $B^h_H$ & $F^H_h$  \\
			\end{tabular}
			\caption{The five vector transfer options. The left block specifies the construction choices, while the right block shows the resulting triples $(r,P_\psi^\phi,R_\phi^\psi)$.
				Multiple rows within the same option yield identical triples.}
			\label{tab:CC_vectort_ransports}
		\end{table}

		\subsubsection{Riemannian coarse model} 
		Consider the fine-level iterate $\phi_k\in\calB_h$ and its restriction \mbox{$\psi_k=r(\phi_k)\in\calB_H$}. 
		To construct the lifting map required in the coarse model, we employ the $e$-retraction defined in Table~\ref{tab:Bernoulli}.
		Expressed in terms of the inverse metric tensor and the logit function, the lifting map is then given by $\invRet_{\psi_k}^H(z) = G_H(\psi_k)^{-1}\big(\logit(z) - \logit(\psi_k)\big)$. Consequently, the metric tensor cancels out, and the geometric correction term simplifies to
		\begin{equation*}
			\big\langle w_k, \invRet^H_{\psi_k}(z) \big\rangle_{\psi_k} = w_k^\top \big( \logit(z) - \logit(\psi_k) \big).    
		\end{equation*}
		As a result, this term is affine in logit coordinates and, by construction, independent of the underlying metric. 
		The objective function of the Riemannian coarse model for the binary continuous cuts problem at $\phi_k$ is therefore given by
		\[
		\min_{z \in \calB_H} q_k^{\CC}(z) = E^{\CC}_H(z) - \big\langle \grad E^{\CC}_H(\psi_k) - R_{\phi_k}^{\psi_k} \big(\grad E^{\CC}_h(\phi_k)\big) , \invRet^H_{\psi_k}(z) \big\rangle_{\psi_k},
		\]
		where $R_{\phi_k}^{\psi_k}$ is the vector restriction operator, whose variants are given in Table~\ref{tab:CC_vectort_ransports}.
		
		\subsubsection{Numerical experiments}
		All numerical experiments for the binary continuous cuts problem were carried out in {\tt Python}. We test our scheme on the grayscale "two cows" image shown in Figure~\ref{fig:cow-grid} of size $960 \times 1280$ pixels. The objective is to segment the cows from the background using the given intensity patches. In the following, we present a systematic study of the vector transfer operators listed in Table~\ref{tab:CC_vectort_ransports} within the multilevel framework. We employ a $2$--level structure using a $240 \times 360$ coarsened version of the original image, with the fine and coarse level binary cuts objectives specified by $(\alpha_h, \varepsilon_h) = (0.1, 10^{-4}) $ and $(\alpha_H, \varepsilon_H) = (0.4, 10^{-3})$, respectively. For geometric coarse solves and fine-level updates, we employ Riemannian gradient descent (RGD), with the $e$--retraction, which coincides with mirror descent under the Fermi-Dirac entropy \cite{Raskutti2015,Raus2024}.  The coarse-level solver runs for $10$ iterations per call. The single-level variant uses the same solver. All step sizes are determined using Armijo backtracking. We compute a~reference energy $E_{\rm ref}$ using a~highly accurate optimization method. For this test model, we do not consider multilevel schemes with more than one coarse level, since the $2$-level setup exhibits better performance\footnote{An experiment demonstrating this is provided in the GitHub repository.}.
		
		
		\begin{figure}[t]
			\centering
			\hspace*{-4mm}\includegraphics[width=160mm]{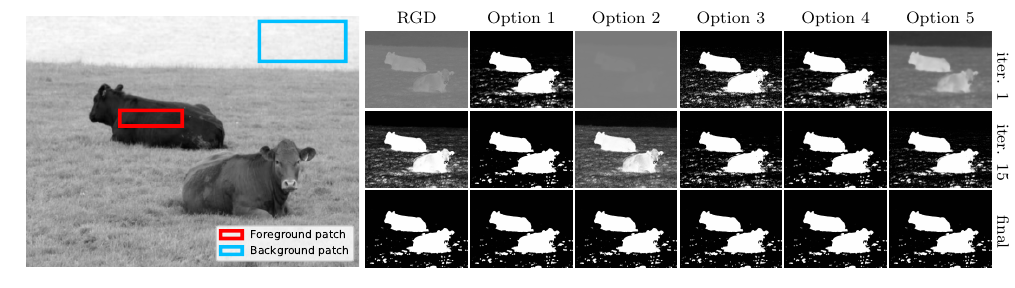}
			\caption{Continuous cuts segmentation on a 'two cow' image from the Microsoft Research Cambridge Object Recognition Image Database (\url{https://www.microsoft.com/en-us/research/project/image-understanding/}). (Left) Grayscale input with the foreground and background seed patches defining the data term. (Right) Evolution of the iterate $\phi_k$ at $k=1$, $15$ and the final iteration $k = 1000$ for the single-level RGD and the multilevel Options~1--5. Options~1 and ~4 dominate, segmenting well within a~few iterations, and Option~3 starts strongly but tapers off. All methods are reliable after a~few iterations, notably without thresholding.}
			\label{fig:cow-grid}
		\end{figure}
		
		Figures~\ref{fig:BCC_iter} and~\ref{fig:BCC_CPU} show the relative optimality gap and the stationarity measure for the tested methods as functions of the iteration count and CPU time, respectively. The geometric weighting of the vector prolongation operator proves to be crucial for performance. Without the metric conjugation,
		the prolongated corrections in Options~2 and~5 become excessively large near boundary pixels, where the Fisher--Rao metric attains high values, causing Armijo backtracking to select very small step sizes and effectively stalling the multilevel scheme. This issue is avoided by Options~1 and~4, for which the conjugation rescales the coarse correction direction before the fine-level line search. As a~result, these variants achieve the best multilevel performance and outpace the single-level variant. Among the metrically conjugated options, injection as point restriction further outperforms full-weighting. The latter averages in logit space and blurs the sharp edges of the fine-level iterate, whereas injection exactly reproduces the fine-grid configuration at the coarse-grid points while requiring less computational effort. 
		
		\captionsetup{width=.95\textwidth, skip=3pt}
		\begin{figure}[t]
			\includegraphics{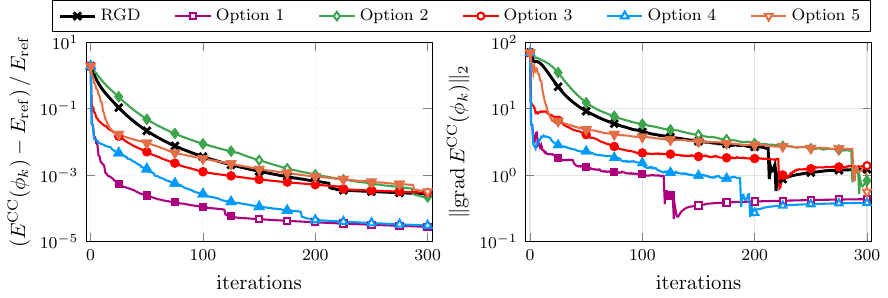}
			\caption{Binary continuous cuts: convergence histories of the relative energy error (left) and the stationarity measure (right) versus iteration count for different optimization schemes. Markers are drawn every $25$ iterations - filled ones indicate coarse correction steps. All methods exhibit sublinear convergence. The multilevel variants with geometrically weighted prolongations (Options~1 and~4) achieve significant acceleration over their single-level counterpart.} 
			\label{fig:BCC_iter}
		\end{figure}
		
		\captionsetup{width=.95\textwidth, skip=3pt}
		\begin{figure}[t]
			\includegraphics{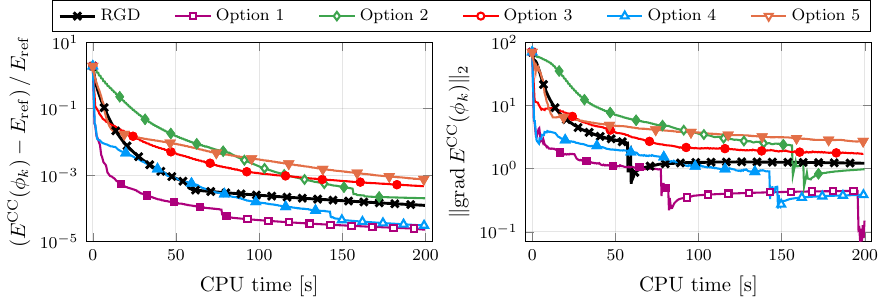}
			\caption{Binary continuous cuts: convergence histories of the relative energy error (left) and the stationarity measure (right) versus CPU time. Markers are drawn every $25$ iterations - filled ones indicate coarse correction steps. The computationally inexpensive vector transfer operators in Option~1 help it retain its acceleration advantage in CPU time.} 
			\label{fig:BCC_CPU}
		\end{figure}
		
		Although Option~3 employs a~geometrically weighted prolongation operator, its vector restriction is less effective. Relying on the Moore--Penrose inverse, it neither provides the smoothing effect of standard restriction operators nor admits a closed-form expression. Consequently, the coarse model inherits this lack of smoothness, and coarse corrections are triggered very frequently because the fine-gradient norm is maximally preserved, disturbing the coarse correction condition~\eqref{eq:est_eta} for essentially any choice of $\eta$; see Figure~\ref{fig:BCC_cc_bars}. Moreover, both vector restriction and prolongation require solving $(F^H_h (F^H_h)^\top)^{-1}$, making Option~3 significantly more expensive in CPU time than indicated by its iteration count, even with an~efficient FFT-based implementation; see Figures~\ref{fig:BCC_iter} and~\ref{fig:BCC_CPU}. Finally, we note that all options produce reliable segmentation after only a few iterations; cf. Figure~\ref{fig:cow-grid}. Thus, the convergence study is primarily included to provide a~systematic comparison of different optimization schemes. 
		
		\captionsetup{width=.95\textwidth, skip=3pt}
		\begin{figure}[t]
			\includegraphics{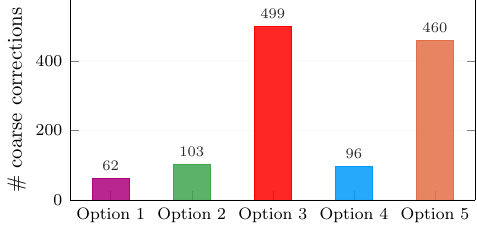}
			\caption{Binary continuous cuts: number of coarse corrections triggered during a~$1000$-iteration run. Options~1,~4 and~5 use a~common parameter choice $\eta = 0.6$ and $\mu = 0.5$. Option~2 with a~reduced threshold $\eta = 0.1$ still triggers relatively few coarse corrections. In contrast, Option~3 triggers them very frequently even with~$\eta = 0.85$, since its Moore--Penrose-based vector restriction nearly preserves the fine-gradient norm, causing condition~\eqref{eq:est_eta} to be satisfied for $\eta$ close to~1. These extreme parameter values indicate a~poorly balanced condition for the respective models.
			} 
			\label{fig:BCC_cc_bars}
		\end{figure}
		
		\subsection{Discussion}
		\label{subsec:discussion}
		
		We now compare the results for the considered three test examples,
		highlighting the key similarities and differences observed across the cases.
		
		\textit{Vector transfer choice.} For the Kohn--Sham and Gross--Pitaevskii problems, all tested operators yield similar results once the Galerkin condition is satisfied. For the continuous cuts model, metric conjugation in the vector prolongation is essential. Without it, prolongated corrections are oversized near binary pixels, forcing Armijo backtracking to take negligibly small steps and stalling convergence. This shows that the importance of metric-compatible transfer is governed by the non-uniformity of the metric tensor across the manifold. On the Bernoulli manifold~$\calB$, the Fisher--Rao metric varies by orders of magnitude between the interior and the near-binary boundary, making metric conjugation in the vector transfer operators essential, whereas 
		on the Stiefel and ellipsoid manifolds the metric is more uniformly 
		bounded and the impact is moderate.
		
		\textit{Levels trade-off.} More levels reduces iteration counts but increases per-step cost. The optimal number depends on the degree of freedom ratio between levels. For the Kohn--Sham problem, the 4-level scheme outperforms the 7-level scheme in CPU time. An analogous trade-off is observed for the Gross--Pitaevskii model, see Figure~\ref{fig:gp_time2}. For the binary continuous cuts problem, a~2-level scheme seems to be the sweet spot.
		
		\textit{Adaptive versus fixed cycling.} The condition~\eqref{eq:est_eta} lets the cycling pattern emerge adaptively rather than being imposed a~priori like a fixed V- or W-cycle, see Figures~\ref{fig:multilevel_structure} and~\ref{fig:multilevel_structure_gp}. Whenever the restriction reflects a genuine coarse-level contribution, the cost of setting up the coarse model is outweighed by the resulting speed-up, which can be verified numerically for all experiments.
		
		\textit{Coarse correction triggering.} The norm of the restricted fine gradient decreases monotonically and serves as a reliable stopping criterion for coarse corrections. Operators that maximally preserve the fine gradient norm (e.g., Option~3 in continuous cuts) trigger coarse corrections far too frequently, indicating poor balance in the correction condition rather than effectiveness.
		
		\textit{Convergence rates.} For the Kohn--Sham and Gross--Pitaevskii problems, RGD exhibits linear convergence, whereas for the continuous cuts model, it converges  sublinear due to the non-trivial kernel of the image gradient operator~$D$. Consequently, the objective remains far from a~strongly convex regime, even in the vicinity of the minimizer. This difference reflects the problem structure rather than the multilevel framework. Since consecutive coarse corrections are not permitted -- every coarse correction is preceded by a~fine-level gradient step (cf.~\eqref{eq:no_two_C}) -- the multilevel iterates are plausibly governed by the same asymptotic rate as single-level RGD; a~rigorous characterization of this inherited-rate phenomenon is left open.
		
		\textit{Discretization independence.} The framework accommodates plane-wave~(Kohn--Sham), finite element~(Gross--Pitaevskii), and grid-based finite difference~(continuous cuts) discretizations without modification, confirming that the Riemannian coarse model construction is agnostic to the spatial discretization strategy.
		
		\section{Conclusion}
		\label{sec:conclusion}
		
		We have presented a Riemannian multilevel optimization framework for constrained energy minimization problems on Riemannian manifolds. The approach is inspired by the optimization formulation of multilevel correction methods, but is formulated independently of a particular grid structure. It combines a Riemannian coarse model with a systematic family of metric-compatible vector transfer operators.
		
		A key feature of the proposed framework is that, under suitable compatibility conditions on the vector transfer operators, the coarse model can be made independent of the particular choice of Riemannian metric. In addition to the algorithmic developments, we established a~rigorous convergence theory for the proposed framework, proving global convergence under standard assumptions.
		
		Numerical experiments on the Kohn--Sham, Gross--Pitaevskii, and binary continuous cuts problems demonstrated substantial computational savings compared with single-level Riemannian optimization. The results also confirm that metric-compatible vector transfer is essential, in particular for manifolds with strongly non-uniform metrics, such as the Fisher--Rao metric on the Bernoulli manifold.
		
		These results indicate that the proposed multilevel framework is a promising and broadly applicable strategy for large-scale Riemannian optimization problems. Future work includes a~quantitative convergence rate analysis, developing adaptive coarsening strategies, and investigating its integration with second-order optimization methods.
		
		
		\printbibliography

@book{Trottenberg2001,
  author    = {Trottenberg, U. and Oosterlee, C. W. and Sch{\"u}ller, A.},
  title     = {Multigrid},
  publisher = {Academic Press},
  address   = {San Diego, CA},
  year      = {2001}
}

@article{RouxLeclercHild2014,
  author  = {Roux, S. and Leclerc, H. and Hild, F.},
  title   = {Efficient binary tomographic reconstruction},
  journal = {J. Math. Imaging Vis.},
  volume  = {49},
  pages   = {335--351},
  year    = {2014},
  doi     = {10.1007/s10851-013-0465-0}
}

@book{AmbrosioFuscoPallara2000,
  author    = {Ambrosio, L. and Fusco, N. and Pallara, D.},
  title     = {Functions of Bounded Variation and Free Discontinuity Problems},
  publisher = {Oxford University Press},
  year      = {2000}
}

@article{Raskutti2015,
    AUTHOR = {Raskutti, G. and Mukherjee, S.},
     TITLE = {The information geometry of mirror descent},
   JOURNAL = {IEEE Trans. Inform. Theory},
  FJOURNAL = {Institute of Electrical and Electronics Engineers.
              Transactions on Information Theory},
    VOLUME = {61},
      YEAR = {2015},
    NUMBER = {3},
     PAGES = {1451--1457},
    MRCLASS = {94A15},
  MRNUMBER = {3318003},
MRREVIEWER = {Ulrich\ Tamm},
       DOI = {10.1109/TIT.2015.2388583},
       URL = {https://doi.org/10.1109/TIT.2015.2388583},
}

@article{Raus2024,
    AUTHOR = {Raus, M. and Elshiaty, Y. and Petra, S.},
     TITLE = {Accelerated {B}regman divergence optimization with {SMART}: an
              information geometric point of view},
   JOURNAL = {J. Appl. Numer. Optim.},
  FJOURNAL = {Journal of Applied and Numerical Optimization},
    VOLUME = {6},
      YEAR = {2024},
    NUMBER = {1},
     PAGES = {1--40},
      MRCLASS = {65K05 (90C52)},
  MRNUMBER = {4876337},
  doi = {10.23952/jano.6.2024.1.01},  
}

@article{Brandt1977,
  author  = {Brandt, A.},
  title   = {Multi-level adaptive solutions to boundary-value problems},
  journal = {Math. Comput.},
  year    = {1977},
  volume  = {31},
  number  = {138},
  pages   = {333--390},
  doi     = {10.1090/S0025-5718-1977-0431719-X}
}

@article{ChanEsedogluNikolova2006,
  author  = {Chan, T.F. and Esedoglu, S. and Nikolova, M.},
  title   = {Algorithms for finding global minimizers of image segmentation and denoising models},
  journal = {SIAM J. Appl. Math.},
  volume  = {66},
  number  = {5},
  pages   = {1632--1648},
  year    = {2006},
  publisher = {SIAM},
  doi     = {10.1137/040615286}
}

@article{RiWi2012,
    author = {Ring, W. and Wirth, B.},
    title = {Optimization methods on {R}iemannian manifolds and their application to shape space},
    journal = {SIAM J. Optim.},
    volume = {22},
    number = {2},
    pages = {596-627},
    year = {2012},
    doi = {10.1137/11082885X},
    URL = {https://doi.org/10.1137/11082885X},
}

@article{WenGoldfarb2010,
  author  = {Wen, Z. and Goldfarb, D.},
  title   = {A line search multigrid method for large-scale nonlinear optimization},
  journal = {SIAM J. Optim.},
  volume  = {20},
  number  = {3},
  pages   = {1478--1503},
  year    = {2010},
  doi     = {10.1137/08071524X}
}

@article{Nash:2000,
    author    = {Nash, S.G.},
    title     = {A multigrid approach to discretized optimization problems},
    journal   = {Optim. Method. Softw.},
    volume    = {14},
    pages     = {99--119},
    year      = {2000},
    doi       = {10.1080/10556780008805795}
}

@phdthesis{wang2023posteriori,
  title={A posteriori error estimation for electronic structure calculations using ab initio methods and its application to reduce calculation costs},
  author={Wang, Y.},
  year={2023},
  school={Sorbonne Universit{\'e}},
  url = {https://theses.hal.science/tel-04524053v1},
}

@Book{AbsMS08,
	Author    = {Absil, P.-A. and Mahony, R. and Sepulchre, R.},
	Title     = {Optimization Algorithms on Matrix Manifolds},
	Publisher = {Princeton University Press},
	Address   = {Princeton, NJ},
	Year      = {2008}
}

@phdthesis{Hua13,
	author={Huang, W.},
	title={Optimization algorithms on {R}iemannian manifolds with applications},
	type = {{PhD} thesis},
	school = {Florida State University},
	year={2013},
	url={https://www.math.fsu.edu/~whuang2/pdf/Huang_W_Dissertation_2013.pdf}
}

@article{XuXXF21,
  author={Xu, F. and Xie, H. and Xie, M. and Yue, M.},
  title={A multigrid method for the ground state solution of {B}ose-{E}instein condensates based on {N}ewton iteration},
  journal={BIT Numer. Math.},
  volume={61},
  number={},
  pages={645-663},
  year={2021},
  doi={10.1007/s10543-020-00830-3}
}

@article{HenMP14,
    author = {Henning, P. and M{\aa}lqvist, A. and Peterseim, D.},
    title = {Two-level discretization techniques for ground state computations of {B}ose-{E}instein condensates},
    journal = {SIAM J. Numer. Anal.}, 
    volume = {52},
    number = {4},
    pages = {1525--1550},
    year = {2014},
    doi = {10.1137/130921520},
}

@article{SutV2021,
    author = {Sutti, M. and Vandereycken, B.},
    title = {Riemannian multigrid line search for low-rank problems},
    journal = {SIAM J. Sci. Comput.},
    volume = {43},
    number = {3},
    pages = {A1803-A1831},
    year = {2021},
    doi = {10.1137/20M1337430},
    URL = {https://doi.org/10.1137/20M1337430}
}

@article{MuePZ23,
    author = {M\"uller, S. and Petra, S. and Zisler, M.},
    title = {Multilevel geometric optimization for regularised constrained linear inverse problems},
    journal = {Pure Appl. Funct. Anal.},
    volume = {8},
    number = {3},
    pages = {855-880},
    year = {2023},
    doi = {},
    URL = {http://yokohamapublishers.jp/online2/oppafa/vol8/p855},
}

@article{HeidW25,
      title={Iterative energy reduction {G}alerkin methods and variational adaptivity}, 
      author={Heid, P. and Wihler, T.P.},
      year={2025},
      journal ={Preprint arXiv 2509.09600 [math.NA]},
      doi={10.48550/arXiv.2509.09600}, 
}

@article{DFTKjcon,
  author = {Herbst, M.F. and Levitt, A. and Canc{\`e}s, E.},
  doi = {10.21105/jcon.00069},
  journal = {Proc. JuliaCon Conf.},
  title = {{DFTK}: {A} {J}ulian approach for simulating electrons in solids},
  volume = {3},
  number = {26},
  pages = {69},
  year = {2021},
}

@Book{Boumal23,
    author    = {Boumal, N.},
    title     = {An Introduction to Optimization on Smooth Manifolds},
    publisher = {Cambridge University Press},
    address = {Cambridge},
    year      = {2023},
}

@article{WenY13,
	author = {Wen, Z. and Yin, W.},
	title = {A feasible method for optimization with orthogonality constraints},
	journal = {Math. Program.},
	volume = {142},
	number = {},
	pages = {397--434},
	year = {2013},
	doi = {10.1007/s10107-012-0584-1},
}

@article{ZhaH04,
	author = {Zhang, H. and  Hager, W.W.},
	title = {A nonmonotone line search technique and its application to unconstrained optimization},
	journal = {SIAM J. Optim.},
	volume = {14},
	number = {4},
	pages = {1043--1056},
	year = {2004},
	doi = {10.1137/S1052623403428208},
}

@article{AnsM25,
    title = {Adaptive gradient descent on {R}iemannian manifolds with nonnegative curvature},
    author = {Ansari-{\"O}nnestam, A. and Malitsky, Y.},
    journal = {Preprint arXiv 2504.16724 [math.OC]},
    year = {2025},
    doi = {10.48550/arXiv.2504.16724}
}

@article{HenY25,
    author = {Henning, P. and Yadav, M.},
    title = {Convergence of a {R}iemannian gradient method for the {G}ross--{P}itaevskii energy functional in a rotating frame},
    journal = {ESAIM Math. Model. Numer. Anal.},
    volume = {59},
    number = {},
    pages = {1145-1175},
    year = {2025},
    doi = {10.1051/m2an/2025018},
}

@article{Roo51,
  title = {New developments in molecular orbital theory},
  author = {Roothaan, C.C.J.},
  journal = {Rev. Mod. Phys.},
  volume = {23},
  number = {2},
  pages = {69--89},
  year = {1951},
  publisher = {American Physical Society},
  doi = {10.1103/RevModPhys.23.69},
  url = {https://link.aps.org/doi/10.1103/RevModPhys.23.69}
}

@article{CanKL21,
author = {Canc\`{e}s, E. and Kemlin, G. and Levitt, A.},
title = {Convergence analysis of direct minimization and self-consistent iterations},
journal = {SIAM J. Matrix Anal. Appl.},
fjournal = {SIAM Journal on Matrix Analysis and Applications}, 	
volume = {42},
number = {1},
pages = {243--274},
year = {2021},
doi = {10.1137/20M1332864},
URL = {https://doi.org/10.1137/20M1332864},
}

@article{AltPS22,
	author = {Altmann, R. and Peterseim, D. and Stykel, T.},
	title = {Energy-adaptive {R}iemannian optimization on the {S}tiefel manifold},
	DOI= "10.1051/m2an/2022036",
	url= "https://doi.org/10.1051/m2an/2022036",
	journal = {ESAIM: Math. Model. Numer. Anal.},
	year = 2022,
	volume = 56,
	number = 5,
	pages = "1629-1653",
}

@article{AltPS24,
	author = {Altmann, R. and Peterseim, D. and Stykel, T.},
	title = {{R}iemannian {N}ewton methods for energy minimization problems of {K}ohn--{S}ham type},
	DOI= "10.1007/s10915-024-02612-3",
	url= "https://doi.org/10.1007/s10915-024-02612-3",
	journal = {J. Sci. Comput.},
	year = {2024},
	volume = {101},
	pages = {article 6},
}

@article{PetPS25,
	author = {Peterseim, D. and P\"uschel, J. and Stykel, T.},
	title = {Energy-adaptive {R}iemannian conjugate gradient method for density functional theory},
	journal = {{arXiv:2503.16225 [math.NA]}},
	year = 2025,
	volume = {},
	number = {},
	pages = {},
    doi = {10.48550/arXiv.2503.16225}
}

@article{SchRNB09,
  author = {Schneider, R. and Rohwedder, T. and Neelov, A. and Blauert, J.},
  title = {Direct minimization for calculating invariant subspaces in density functional computations of the electronic structure}, 
  journal = {J. Comput. Math.},
  fjournal = {Journal of Computational Mathematics},
  year = {2009},
  volume = {27},
  number = {2-3},
  pages = {360--387},
  doi = {},
  url = {http://global-sci.org/intro/article_detail/jcm/8577.html}
}

@Book{Lee18,
	author    = {Lee, J.M.},
	title     = {Introduction to Riemannian Manifolds},
    edition   = {2nd},
	publisher = {Springer International Publishing AG},
    address   = {Cham},
    year      = {2018},
}

@article{SegK24,
  author = {S\'eguin, A. and Kressner, D.},
  title = {Hermite interpolation with retractions on manifolds}, 
  journal = {BIT Numer. Math.},
  year = {2024},
  volume = {64},
  number = {},
  pages = {},
  eid = {42},
  doi = {10.1007/s10543-024-01023-y},
}

@article{BaoD04,
	author = {Bao, W. and Du, Q.},
	title = {Computing the ground state solution of {B}ose--{E}instein condensates by a~normalized gradient flow},
	journal = {SIAM J. Sci. Comput.},
	volume = {25},
	year = {2004},
	number = {5},
	pages = {1674--1697},
	doi = {10.1137/S1064827503422956},
}

@article{HenP20,
  author = {Henning, P. and Peterseim, D.},
  title = {Sobolev gradient flow for the {G}ross--{P}itaevskii eigenvalue problem: global convergence and computational efficiency},
  journal = {SIAM J. Numer. Anal.},
  volume = {58},
  year = {2020},
  number = {3},
  pages = {1744--1772},
  doi = {doi: 10.1137/18M1230463},
}

@article{DanP17,
	AUTHOR = {Danaila, I. and Protas, B.},
	TITLE = {Computation of ground states of the {G}ross--{P}itaevskii functional via {R}iemannian optimization},
	JOURNAL = {SIAM J. Sci. Comput.},
	FJOURNAL = {SIAM Journal on Scientific Computing},
	VOLUME = {39},
	YEAR = {2017},
	NUMBER = {6},
	PAGES = {B1102--B1129},
	ISSN = {1064-8275},
	DOI = {10.1137/17M1121974},
}

@article{DuL22,
	author = {Du, C.-E. and Liu, C.-S.},
	title = {Newton--{N}oda iteration for computing the ground states of nonlinear {S}chr{\"o}dinger equations},
	journal = {SIAM J. Sci. Comput.},
	fjournal = {SIAM Journal on Scientific Computing},
	volume = {44},
	number = {4},
	pages = {A2370--A2385},
	year = {2022},
	doi = {10.1137/21M1435793},
}

@article{ChenLLZ24,
author = {Chen, Z. and Lu, J. and Lu, Y. and Zhang, X.},
title = {On the convergence of {S}obolev gradient flow for the {G}ross--{P}itaevskii eigenvalue problem},
journal = {SIAM J. Numer. Anal.},
volume = {62},
number = {2},
pages = {667-691},
year = {2024},
doi = {10.1137/23M1552553},
}

@article{WuWB17,
    author={Wu, X. and  Wen, Z. and Bao, W.},
   title={A regularized {N}ewton method for computing ground states of {B}ose--{E}instein condensates},
   journal={J. Sci. Comput.},
   volume={73},
    number={},
  pages = {303-329},
   year={2017}, 
   DOI={10.1007/s10915-017-0412-0},
}

@article{AntLT17,
author = {Antoine, X. and Levitt, A. and Tang, Q.},
title = {Efficient spectral computation of the stationary states of rotating {B}ose--{E}instein condensates by preconditioned nonlinear conjugate gradient methods},
journal = {J. Comput. Phys.},
volume = {343},
pages = {92-109},
year = {2017},
doi = {10.1016/j.jcp.2017.04.040},
}

@article{FenT25,
    title = {On preconditioned {R}iemannian gradient methods for minimizing the {G}ross--{P}itaevskii energy functional: algorithms, global convergence and optimal local convergence rate},
    author = {Feng, Z. and Tang, Q.},
    journal = {Preprint arXiv 2510.13516 [math.NA]},
    year = {2025},
    doi = {10.48550/arXiv.2510.13516},
}

@article{BorH08,
author = {Borz\'{\i}, A. and Hohenester, U.},
title = {Multigrid optimization schemes for solving {B}ose--{E}instein condensate control problems},
journal = {SIAM J. Sci. Comput.},
volume = {30},
number = {1},
pages = {441-462},
year = {2008},
doi = {10.1137/070686135}
}

@Article{ABBBFGHHKMMSTUWW25,
  author  = {Arndt, D. and Bangerth, W. and Bergbauer, W. and Blais, B. and Fehling, M. and Gassm\"{o}ller, R.
             and Heister, T. and Heltai, L. and Kronbichler, M. and Maier, M. and Munch, P. and Scheuerman, M.
             and Turcksin, B. and Uzunbajakau, S. and Wells, D. and Wichrowski, M.},
  title   = {{The deal.II library, Version~9.7}},
  journal = {J. Numer. Math.},
  year    = 2025,
  volume  = 33,
  number  = 4,
  pages   = {403--415},
  doi     = {10.1515/jnma-2025-0115}
}

@article{HerSY25,
	author = {Hermann, M. and Stykel, T. and Yadav, M.},
	title = {Qualitative and quantitative analysis of Riemannian optimization methods for ground states of rotating multicomponent Bose-Einstein condensates},
	journal = {Preprint arXiv 2512.05939 [math.NA]},
	year = {2025},
	doi = {10.48550/arXiv.2512.05939}
}

@article{ShuA23,
author = {Shustin, B. and Avron, H.},
title = {Riemannian optimization with a preconditioning scheme on the generalized {S}tiefel manifold},
journal = {J. Comput. Appl. Math.},
volume = {423},
eid = {114953},
year = {2023},
doi = {10.1016/j.cam.2022.114953}
}

@Book{BriHM00,
	Author    = {Briggs, W.L. and  Henson, V.E. and McCormick, S.F.},
	Title     = {A Multigrid Tutorial},
    Edition   = {2nd},
	Publisher = {SIAM},
	Address   = {Philadelphia, PA},
	Year      = {2000}
}

@article{CanCHM18,
    author={Canc{\`e}s, E. and Chakir, R. and He, L. and Maday, Y.},
    title={Two-grid methods for a class of nonlinear elliptic eigenvalue problems}, 
    journal={ IMA J. Numer. Anal.}, 
    year={2018},
    volume={38},
    number={},
    pages={605-645},
    doi={10.1093/imanum/drw053}
}

@article{CanDMSV16,
author = {Canc{\`e}s, E. and Dusson, G. and Maday, Y. and Stamm, B. and Vohral\'{\i}k, M.},
title = {A perturbation-method-based post-processing for the planewave discretization of {K}ohn–{S}ham models},
journal = {J. Comput. Phys.},
volume = {307},
pages = {446-459},
year = {2016},
doi = {10.1016/j.jcp.2015.12.012}
}

@article{GriH25,
      title={An additive two-level parallel variant of the {DMRG} algorithm with coarse-space correction}, 
      author={Grigori, L. and Hassan, M.},
      year={2025},
      journal ={Preprint arXiv: 2505.23429 [math.NA]},
      doi={10.48550/arXiv.2505.23429}, 
}

@article{ChenDGHZ14,
author = {Chen, H. and Dai, X. and Gong, X. and He, L. and Zhou, A.},
title = {Adaptive finite element approximations for {K}ohn--{S}ham models},
journal = {Multiscale Model. Simul.},
volume = {12},
number = {4},
pages = {1828-1869},
year = {2014},
doi = {10.1137/130916096}
}

@article{XieXYZ23,
author = {Xie, H. and Xie, M. and Yin, X. and Zhao, G.},
title = {An efficient adaptive mesh redistribution method for nonlinear eigenvalue problems in {B}ose--{E}instein condensates},
journal = {J. Sci. Comput.},
volume = {94},
number = {},
pages = {article 37},
year = {2023},
doi = {10.1007/s10915-022-02093-2},
}

@book{Amari:2000,
	author = {Amari, S.-I. and Nagaoka, H.},
	publisher = {Americam Mathematical Society},
    address ={Providence},
	title = {{Methods of Information Geometry}},
	year = {2000}
 }

@article{ElshiatyP2026,
author = {Elshiaty, Y. and Petra, S.},
title = {Multilevel {B}regman proximal gradient descent},
journal = {SIAM J. Imaging Sci.},
volume = {19},
number = {2},
pages = {913-942},
year = {2026},
doi = {10.1137/25M1775725},
}
		
	\end{document}